\documentclass[numbers, webpdf]{ima-authoring-template}

\usepackage{algpseudocode}
\usepackage{algorithm}
\usepackage{algorithmicx}
\usepackage{float}

\allowdisplaybreaks
\graphicspath{{Fig/}}


\theoremstyle{thmstyletwo}%
\newtheorem{theorem}{Theorem}
\newtheorem{proposition}[theorem]{Proposition}%
\newtheorem{lemma}[theorem]{Lemma}%

\newtheorem{remark}{Remark}%
\newtheorem{assumption}{Assumption}%

\newcommand{\R}{\mathbb{R}}
\newcommand{\N}{\mathbb{N}}

\newcommand{\E}{\mathbb{E}}

\numberwithin{equation}{section}

\begin{document}

\copyrightyear{31th March 2025}
\appnotes{Paper}


\title[Efficient solution of ill-posed integral equations through averaging]{Efficient solution of ill-posed integral equations through averaging}

\author{Michael Griebel
\address{\orgdiv{Institut f\"ur Numerische Simulation}, \orgname{Universit\"at Bonn}, \orgaddress{\street{Friedrich-Hirzebruch-Allee 7}, \postcode{53115 Bonn}, \country{Germany}}}
\address{ \orgname{Fraunhofer Institute for Algorithms and Scientific Computing SCAI}, \orgaddress{\street{Schloss Birlinghoven}, \postcode{53754 Sankt Augustin}, \country{Germany}}}}
\author{Tim Jahn*
\address{\orgdiv{Institut f\"ur Mathematik}, \orgname{TU Berlin}, \orgaddress{\street{	Stra\ss{}e des 17. Juni 135}, \postcode{ 10623 Berlin }, \country{Germany}}}}

\authormark{Griebel and Jahn}

\corresp[*]{Corresponding author: \href{email:jahn@tu-berlin.de}{jahn@tu-berlin.de}}

\received{31th}{3}{2025}


\abstract{	This paper discusses the error and cost aspects of ill-posed integral equations when given discrete noisy point evaluations on a fine grid. Standard solution methods usually employ discretization schemes that are directly induced by the measurement points. Thus, they may scale unfavorably with the number of evaluation points, which can result in computational inefficiency. To address this issue, we propose an algorithm that achieves the same level of accuracy while significantly reducing computational costs. Our approach involves an initial averaging procedure to sparsify the underlying grid. To keep the exposition simple, we focus on regularization via the truncated singular value decomposition of one-dimensional ill-posed integral equations that have sufficient smoothness. However, the approach can be generalized to other popular regularization methods and more complicated two- and three-dimensional problems with appropriate modifications.}
\keywords{ integral equations; random noise; complexity; error estimates.}


\maketitle

\section{Introduction}\label{s1}

This article discusses integral equations. Examples for their application include various types of tomography, such as x-ray, electrical impedance, or magnetic resonance tomography in medical contexts. In geophysical applications and the oil industry, seismic waves are used to detect underground structures, resulting in an inverse problem. Additional examples are the determination of the implicit volatility that drives stock prices in financial mathematics or the 
 identification of the atmospheric state required for
weather predictions. In many cases, the desired object of determination is the distribution of a physical parameter, such as density, conductivity, or volatility, for spatially extended objects that are described by numerous variables. Therefore, the discretization of the integral equation often results in a high number of measured data and degrees of freedom, making the cost of approximative solution algorithms a significant factor. 
 The integral equation must be solved for a function that is not directly accessible but is measured in some way. Measurement models distinguishes between linear information, where Fourier modes or similar functionals of the data are accessible, and point-wise (standard) information, where only point evaluations of the data are available. This article focuses on point-wise information. Finding an appropriate discretization scheme can be challenging and often requires additional a priori information about the problem and the unknown solution. If the required information is unavailable, a common approach is to use the measurement grid as the discretization grid. The main idea of this article is to decrease the size of such an initially given fine discretization by averaging. Averaging data is a common engineering practice with many applications, see \cite{lyons2004understanding}. It has been successfully applied in the closely related field of numerical differentiation by finite differences, as shown in \cite{anderssen1999numerical}. Numerical differentiation is often a good starting point for understanding new approaches to inverse problems, as it captures essential features and challenges while being analytically simple, see \cite{hanke2001inverse}. We first introduce our new numerical method for such a model differentiation problem. Unlike \cite{anderssen1999numerical}, we address the problem directly within the framework of regularization theory, which enables the use of various regularization schemes and generalizes to integral equations. Additionally, we focus in this article solely on the one-dimensional case. This allows us to present the fundamental concepts and specific properties of our method in a clear and concise manner, without the complications that arise from a higher-dimensional setting. However, as we will discuss in the final section of this article, our approach can be applied to a wider range of problems.

This paper is structured as follows: Section \ref{s2} describes the general problem, while Section \ref{s3} introduces our novel approach and presents the main results. Specifically, we demonstrate that the optimal error rate can be achieved with significantly reduced cost complexity. We first illustrate this property using a simple example, for which we explicitly know the singular value decomposition of the discretized integral operator. Then, the method is extended to more general settings where additionally quadrature methods are employed to compute the approximations. The subject of adaptive regularization is also discussed. Section \ref{s6} presents numerical experiments, while Section \ref{s7} provides an outlook on how to generalize the results in various ways. The proofs of our findings are presented in detail in the appendix.
\section{The setting}\label{s2}
We will first formulate and analyze our approach for the exemplary integral equation
\begin{equation}\label{s1:e0}
	Kf=g,
\end{equation}
where    
\begin{equation}\label{s1:e1}
	g(x)=(Kf)(x)=\int_0^1\kappa(x,y)f(y)\mathrm{d}x,
\end{equation}
with the integral kernel 
\begin{equation}\label{s1:e1a}
	\kappa(x,y):=\min\left(x(1-y),y(1-x)\right).
\end{equation}
The rule \eqref{s1:e1} defines a compact operator $K:L^2(0,1)\to L^2(0,1)$. Obviously, this operator $K$ is not surjective, since $g=Kf$ is differentiable for any $f\in L^2(0,1)$. We furthermore assume that the exact $g$ is corrupted by (irregular) measurement noise. More precisely, we deal with the associated inverse problem of reconstructing $f\in L^2(0,1)$ from noisy point evaluations of $g$. These $m\in\N$  evaluations are taken  on a set of points $\xi_{lm}\in(0,1), l=1,...,m,$ and are given by
\begin{equation}\label{s1:e2}
	\begin{pmatrix}g^\delta_m\end{pmatrix}_{l=1}^m:=\begin{pmatrix}g(\xi_{lm})+\delta Z_l\end{pmatrix}_{l=1}^m \in\R^m,
\end{equation}
where $g=Kf$ is the exact (unknown) data, $\delta>0$ is the noise level and $Z_l$ are i.i.d random variables that are unbiased ($\E[Z_l]=0$) with unit variance $\E[Z_l^2]=1$.  Moreover, the continuity of $\kappa$ implies that $Kf$ is continuous for all $f\in L^2(0,1)$, which justifies the use of point evaluations. For simplicity, we assume that the evaluation points are uniformly distributed for this example, i.e., $\xi_{lm}:=l/(m+1)$, and thus form a uniform grid. The central task is now to solve the equation \eqref{s1:e0} from the noisy point evaluations of $g$. This can be formulated as follows:
\begin{equation*}
	\mbox{Given noisy measurements } g^\delta_m\in\R^m, \mbox{ find (approximate) } f.
\end{equation*}
This is a classic example of an inverse problem. A characteristic challenge of inverse problems is that they are often ill-posed in the Hadamard sense: A problem is said to be well-posed when it admits a unique solution that is continuously dependent on the input data, and it is said to be ill-posed when it is not well-posed. The above infinite dimensional equation \eqref{s1:e0} is ill-posed. While it can be shown that $K$ is injective, the domain of its inverse $K^{-1}$ is strictly less than $L^2(0,1)$ (since $K$ is not surjective), which implies that $K^{-1}$ is not continuous on its domain. Since we have to solve \eqref{s1:e0} from noisy data, this non-continuity of $K^{-1}$ implies that standard inversion methods are not feasible here, and we have to use additional regularization. Furthermore, we need to discretize. To construct an approximation for $f$, we proceed as follows: The measurement grid naturally induces a semi-discrete model of $K$, which we will denote by $K_m$. It is defined as
\begin{align}\label{s1:e2:a}
	K_m:L^2(0,1)&\to\R^m\\
	f &\mapsto \left( (Kf)(\xi_{lm})\right)_{l=1}^m = \left(\int \kappa(\xi_{lm},y) f(y)\mathrm{d}y\right)_{l=1}^m.
\end{align}
This gives us the semi-discrete equation (with noisy measurements)
\begin{equation}\label{s1:e2a}
	g_m^\delta= K_mf
\end{equation}
 which we have to solve. Obviously, even though $K$ is injective, the semi-discrete operator $K_m$ is not, i.e. solving \eqref{s1:e2a} in all of $L^2(0,1)$ is not sufficient to uniquely specify the solution. A natural further choice is to pick from the set of solutions of \eqref{s1:e2a} the element of minimal norm. This is equivalent to restricting the solution space to $\mathcal{N}(K_m)^\perp = \mathcal{R}(K_m^*)\subset L^2(0,1)$.   Note that for this specific setting, $K_m$ has full rank and thus ${\rm dim}\mathcal{N}(K_m)^\perp=m$.  Now the ill-posedness of the infinite-dimensional operator $K$ is inherited by the semi-discrete operator $K_m$ in the sense that it is highly ill-conditioned. Thus, one has to resort to regularization methods, such as Tikhonov regularization, Landweber iteration, or spectral cut-off techniques.   There is a large body of literature devoted to the treatment of the semi-discrete equation \eqref{s1:e2a}, see \cite{bertero1985linear, bertero1988linear, nair2007regularized, krebs2009sobolev, pereverzev2012balancing}, where various a priori and adaptive a posteriori regularization methods are discussed.
 
 In this paper, as a new contribution, we establish data compression as a means to reduce complexity, where we exploit the random cancellation of the averaged initial data. We also propose a new analysis of the discretization error, which is shown to be an improvement in certain cases, see Remark \ref{rem:disc:err} below. Finally, we give rigorous error bounds for an approach based on a design matrix containing only point evaluations instead of integrals of the kernel $\kappa$. In this article we concentrate on spectral cut-off, but our results could be generalized to other methods as well,   see Section \ref{s7} .

Spectral cut-off is based on the singular value decomposition of $K_m$, which we denote by $(v_{jm},u_{jm},\sigma_{jm})_{j\le m}$. The singular values $(\sigma_{jm})_{j\le m}$ form a positive decreasing sequence $\sigma_{1m}\ge ... \ge \sigma_{mm} >0$ and the singular functions $(v_{jm})_{j\le m}$ and vectors $(u_{jm})_{j\le m}$ are orthonormal bases in  $\mathcal{N}(K_m)^\perp\subset L^2(0,1)$ and $\R^m$, respectively. In addition, the following two relations hold:
\begin{align*}
	K_m v_{jm} =\sigma_{jm} u_{jm}\qquad \mbox{and}\qquad K_m^* u_{jm}=\sigma_{jm} v_{jm},
\end{align*} 
for all $j=1,...,m$, where $K_m^*:\R^m\to L^2(0,1)$ is the adjoint operator implicitly given by the relation $(f,K_m^*\lambda)_{L^2(0,1)} = (K_mf,\lambda)_{\R^m}$, for $f\in L^2(0,1)$ and $\lambda\in\R^m$.  We emphasize here that, while every compact operator between Hilbert spaces has a singular value decomposition, the concrete singular values and vectors (or functions) for a particular setting are rarely known   and usually have to be somehow numerically approximated. The specific setting we consider in the first part of this paper is exceptional in the sense that we can derive the singular value decomposition explicitly. In the second part of this article we study more general integral equations in Section \ref{s5} where we have to rely on numerical approximation methods.

Based on the singular value decomposition we now define an approximation to the unknown $f$ via spectral cut-off by
\begin{equation}\label{s1:e4}
	f^\delta_{k,m}:= \sum_{j=1}^k \frac{\left(g^\delta_m,u_{jm}\right)_{\R^m}}{\sigma_{jm}} v_{jm},
\end{equation}
where $k=0,...,m$ is the truncation index which has to be chosen depending on the measurement $g^\delta_m$ and the noise level $\delta>0$. To determine the singular value decomposition needed for $f^\delta_{k,m}$ we rely on a result from \cite{jahn2022cross}. There it was shown that the singular value decomposition of the semi-discrete operator $K_m:L^2(0,1)\to \R^m$ is closely related to the eigenvalue decomposition of a representation matrix. In fact, recall that we restricted the search space for the solution of \eqref{s1:e2a} to $\mathcal{N}(K_m)^\perp$. This space is finite-dimensional and has the form
\begin{equation}\label{ansatz}
	\mathcal{N}(K_m)^\perp:=\left\{ \sum_{l=1}^m \alpha_l \kappa(\xi_{lm},\cdot)~:~ \alpha_l\in\R, l=1,...,m\right\}.
\end{equation}
The basis functions $\kappa(\xi_{lm},\cdot)$ are exactly the Riesz representers of the bounded linear functionals $f\mapsto(Kf)(\xi_{lm})$, since $(Kf)(\xi_{lm}) = (f,\kappa(\xi_{lm},\cdot))$.
Now, expressing the action of $K_m$ in the basis $\{\kappa(\xi_{lm},\cdot), l=1,...,m\}$ yields 
\begin{equation*}
 K_m\left(\sum_{l=1}^m \alpha_l \kappa(\xi_{lm},\cdot)\right) = \sum_{l=1}^m \alpha_l \begin{pmatrix}\int \kappa(\xi_{1m},y) \kappa(\xi_{lm},y){\rm d} y \\ ... \\ \int \kappa(\xi_{mm},y) \kappa(\xi_{lm},y){\rm d} y\end{pmatrix} = T_m \alpha, 
\end{equation*}
with the matrix $T_m\in\R^{m\times m}$, where
\begin{equation}\label{s1:e2b}
	\left(T_m\right)_{ij}:=\left(\int \kappa(\xi_{im},y)\kappa(\xi_{jm},y){\rm d}y\right)_{ij},
\end{equation}
    and the vector $\alpha=\begin{pmatrix}\alpha_1 &  ... &\alpha_m\end{pmatrix}^T$. Note that due to the special form of the kernel $\kappa$ we can explicitly evaluate the entries of the matrix $T_m$. Furthermore, $T_m$ is symmetric by construction. The following relationship between the eigenvalue decomposition of $T_m$ and the singular value decomposition of $K_m$ is shown in \cite{jahn2022cross}:
\begin{proposition}\label{prop0}
	Let $(\lambda_{jm},w_{jm})_{j=1}^m$ with $\lambda_{1m}\ge ... \ge \lambda_{mm}>0$ denote the eigenvalue decomposition of $T_m$, i.e. $T_m=W_m\Lambda_m W_m^T$, where $w_{jm}$ indicates the $j$-th column of $W_m$ and $\Lambda_m$ is diagonal with the $j$-th diagonal entry $\lambda_{jm}$. Then, for the singular value decomposition of $K_m$ it holds that
		\begin{align*}
		\sigma_{jm}=\sqrt{\lambda_{jm}},\qquad
		u_{jm}=w_{jm},\qquad
		v_{jm}(\cdot)= \frac{1}{\sigma_{jm}}\sum_{l=1}^m(w_{jm})_l\kappa(\xi_{lm},\cdot),
	\end{align*}
	where $(w_{jm})_l$ is the $l$-th component of the vector $w_{jm}$.
	\end{proposition}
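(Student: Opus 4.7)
The plan is to directly verify that the candidate triples $(\sigma_{jm},u_{jm},v_{jm})$ proposed in the proposition satisfy the characterizing properties of the SVD of $K_m$, namely the two mapping relations $K_m v_{jm}=\sigma_{jm}u_{jm}$ and $K_m^*u_{jm}=\sigma_{jm}v_{jm}$, together with orthonormality of $\{u_{jm}\}$ in $\R^m$ and of $\{v_{jm}\}$ in $\mathcal{N}(K_m)^\perp$. The key tool is the identity $K_m\bigl(\sum_{l=1}^m\alpha_l\kappa(\xi_{lm},\cdot)\bigr)=T_m\alpha$ already spelled out in the excerpt, which effectively reduces the action of $K_m$ on its non-trivial subspace $\mathcal{N}(K_m)^\perp$ to multiplication by the symmetric matrix $T_m$ expressed in the basis $\{\kappa(\xi_{lm},\cdot)\}_l$.

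First I would check the forward relation: plugging the ansatz for $v_{jm}$ into $K_m$ and invoking the above identity gives $K_m v_{jm}=\sigma_{jm}^{-1}T_m w_{jm}=\sigma_{jm}^{-1}\lambda_{jm}w_{jm}=\sigma_{jm}w_{jm}$, which matches $\sigma_{jm}u_{jm}$ precisely because we set $u_{jm}:=w_{jm}$ and $\sigma_{jm}:=\sqrt{\lambda_{jm}}$. Orthonormality of $\{u_{jm}\}$ is then immediate since $W_m$ is orthogonal. For orthonormality of $\{v_{jm}\}$ in $L^2(0,1)$, I would observe that $T_m$ is the Gram matrix of the basis functions $\kappa(\xi_{lm},\cdot)$, so
\begin{equation*}
(v_{jm},v_{km})_{L^2}=\frac{w_{jm}^T T_m w_{km}}{\sigma_{jm}\sigma_{km}}=\frac{\lambda_{km}\,w_{jm}^T w_{km}}{\sigma_{jm}\sigma_{km}}=\delta_{jk},
\end{equation*}
where the last equality uses $\sigma_{jm}^2=\lambda_{jm}$ and the orthonormality of the eigenvectors of $T_m$.

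Next I would verify the adjoint relation. A short dualization against an arbitrary $f\in L^2(0,1)$ yields the explicit formula $K_m^*\lambda=\sum_{l=1}^m\lambda_l\kappa(\xi_{lm},\cdot)$, since $(K_mf,\lambda)_{\R^m}=\sum_l\lambda_l(f,\kappa(\xi_{lm},\cdot))_{L^2}$. Applied to $\lambda=u_{jm}=w_{jm}$, this recovers exactly $\sigma_{jm}v_{jm}$ by the very definition of $v_{jm}$, so the second mapping relation holds by construction.

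The main subtlety is not any individual calculation but rather ensuring that the $m$ produced pairs actually comprise the complete SVD of $K_m$ on its $m$-dimensional effective domain $\mathcal{N}(K_m)^\perp$. This requires $T_m$ to be positive definite, i.e.\ $\lambda_{mm}>0$, which in turn holds because $T_m$ is the Gram matrix of the $m$ Riesz representers $\kappa(\xi_{lm},\cdot)$, and these are linearly independent at the distinct nodes $\xi_{lm}$ for the specific kernel in \eqref{s1:e1a}. Given this, the square roots $\sqrt{\lambda_{jm}}$ are well-defined, the $m$ orthonormal $u_{jm}$ exhaust $\R^m$, and the corresponding $m$ orthonormal $v_{jm}$ exhaust $\mathcal{N}(K_m)^\perp$, so the singular system is complete and no further pairs are missed.
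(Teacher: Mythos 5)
Your proof is correct. The paper itself does not prove Proposition \ref{prop0} but cites it from \cite{jahn2022cross}, and its one-line proof of the analogous Proposition \ref{prop0mod} rests on the identity $S_m\alpha=Q_mQ_m^*\alpha$; your verification of the two mapping relations via $K_m\bigl(\sum_l\alpha_l\kappa(\xi_{lm},\cdot)\bigr)=T_m\alpha$ and $K_m^*\lambda=\sum_l\lambda_l\kappa(\xi_{lm},\cdot)$ is exactly that identity $T_m=K_mK_m^*$ unpacked, so the approach is essentially the same, with the orthonormality and completeness checks (the latter already guaranteed by the hypothesis $\lambda_{mm}>0$) correctly filled in.
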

	Consequently, expressing the estimator $f_{k,m}^\delta$ in the basis $\{\kappa(\xi_{lm},\cdot), l=1,\dots,m\}$ and using Proposition \ref{prop0}, we get
	\begin{align}\label{disc:est}
	f_{k,m}^\delta=\sum_{l=1}^m\alpha_l \kappa(\xi_{lm},\cdot)\qquad\mbox{with}\qquad \alpha_l:=\alpha_l(k,m,\delta)=\sum_{j=1}^k\frac{(g_m^\delta,w_{jm})_{\R^m}}{\lambda_{jm}} (w_{jm})_l.
	\end{align}
	We note that if $\lambda_{jm}>0$ for all $j\le m$, then Proposition \ref{prop0} actually holds for any continuous kernel $\kappa$  and not just for our specific choice of \eqref{s1:e1a}. The first main contribution of this article is the derivation of a precise bound for the error $\|f_{k,m}^\delta-f\|$, in particular regarding its dependence on the initial measurement grid. This analysis relies critically on the fact that for our setting we have explicit representations of the singular value decompositions of both $K$ and $K_m$. Thus we deduce that the regularized approximation \eqref{s1:e4} to the solution of the continuous integral equation \eqref{s1:e0} from noisy point evaluations \eqref{s1:e2} can be constructed as follows: Determine the $k$ leading eigenvalues and vectors of the matrix $T_m$ from \eqref{s1:e2b} and use them to compute the coefficients of $f_{k,m}^\delta$ in the basis $\{\kappa(\xi_{lm},\cdot), l=1,\dots,m\}$ via \eqref{disc:est}. The concrete choice and the effect of the truncation index $k$ will be discussed below. First, we will analyze the accuracy of $f_{k,m}^\delta$ in \eqref{s1:e4}. Then, in the next section, we will present our new modified approach based on averaging, which achieves the same order of accuracy. The main advantage of our new modified approach is that it is based on the matrix $T_{m_o}$ in \eqref{s1:e2b} for $m_o$ less than $m$, which reduces the computational costs significantly. 

  We will now look at the accuracy of $f_{k,m}^\delta$ in more detail. First, our preliminary example with the kernel \eqref{s1:e1a} is particularly simple. {  Here we can not only compute the eigenvalue decomposition of the continuous operator $K^*K$ (see \cite{courant1989methods}, p.371/373), but also explicitly determine $T_m$ and the eigenvalue decomposition of $K_m^*K_m$}, which allows us to compute the error $\|f_{k,m}^\delta-f\|$ exactly. We take the following result from \cite{jahn2022cross}:
\begin{lemma}\label{lem0}
	The singular values and left singular vectors of $K$ from \eqref{s1:e0} are 
		\begin{equation*}
		\sigma_j=(\pi j)^{-2},\qquad v_j(x)={  \sqrt{2} }\sin(j\pi x), \quad j\in\N.
	\end{equation*}	
	The singular values and left singular vectors of $K_m$ are 
		\begin{equation*}
		\sigma_{jm}=\frac{1}{4(m+1)^{3/2}\sin^2\left(\frac{{  j\pi}}{2(m+1)}\right)}\sqrt{1-\frac{2}{3} \sin^2\left(\frac{{  j\pi}}{2(m+1)}\right)}
	\end{equation*}
		and 
		\begin{equation*}
		v_{jm}(x)={  \sqrt{\frac{2}{(m+1)\sigma_{jm}^2}}} \sum_{l=1}^m \sin\left(j \pi \xi_{lm}\right) \kappa(\xi_{lm},x)
	\end{equation*}
		with $j=1,...,m$.
\end{lemma}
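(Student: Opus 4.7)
The lemma makes two distinct claims, one about $K$ and one about $K_m$, and I would treat them separately. For the continuous operator $K$, the central observation is that $\kappa(x,y)=\min(x(1-y),y(1-x))$ is exactly the Green's function of $-d^2/dx^2$ on $(0,1)$ with homogeneous Dirichlet boundary conditions, so $K$ coincides with the inverse of the Dirichlet Laplacian on $L^2(0,1)$. Because the kernel is symmetric, $K=K^*$, and thus the singular value decomposition coincides with the spectral decomposition. The eigenfunctions of $-\Delta_D$ on $(0,1)$ are the classical $v_j(x)=\sqrt{2}\sin(j\pi x)$ with eigenvalues $(j\pi)^2$, and consequently $K$ has the same eigenfunctions with reciprocal eigenvalues $(j\pi)^{-2}$, giving the first assertion.

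For the semi-discrete operator $K_m$ I would invoke Proposition \ref{prop0} and reduce everything to the eigendecomposition of the matrix $T_m$ defined in \eqref{s1:e2b}. The key identity I would aim to establish is
\begin{equation*}
T_m=\frac{1}{(m+1)^3}\left(S^{-2}-\frac{1}{6}S^{-1}\right),
\end{equation*}
where $S\in\R^{m\times m}$ is the standard tridiagonal matrix with $2$ on the diagonal and $-1$ on the sub- and super-diagonals. To arrive at this, I would first show that the matrix of kernel values $(M_m)_{ij}:=\kappa(\xi_{im},\xi_{jm})$ equals $\frac{1}{m+1}S^{-1}$: this follows from the explicit formula $\kappa(\xi_i,\xi_j)=\xi_i(1-\xi_j)$ for $i\le j$ together with the classical closed form $(S^{-1})_{ij}=i(m+1-j)/(m+1)$ for $i\le j$. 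Next, since for every $i$ the function $y\mapsto\kappa(\xi_{im},y)$ is piecewise linear on the grid $\{\xi_{lm}\}$, the integral $(T_m)_{ij}=\int_0^1\kappa(\xi_{im},y)\kappa(\xi_{jm},y)\,dy$ can be evaluated exactly as a sum of sub-interval integrals of products of linear functions. This computation yields the factorization $T_m=\frac{1}{m+1}M_m B M_m$, with $B$ the symmetric tridiagonal mass matrix having $\frac{2}{3}$ on the diagonal and $\frac{1}{6}$ on the off-diagonals. Observing that $B=I-\frac{1}{6}S$ and substituting $M_m=\frac{1}{m+1}S^{-1}$ then gives the polynomial-in-$S^{-1}$ form displayed above.

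With this identity the eigendecomposition of $T_m$ follows directly from the classical spectral decomposition of $S$: the $l$-th component of the normalized eigenvector is $(w_{jm})_l=\sqrt{2/(m+1)}\sin(j\pi l/(m+1))$ with eigenvalue $\mu_j=4\sin^2(j\pi/(2(m+1)))$. Hence
\begin{equation*}
\lambda_{jm}=\frac{1}{(m+1)^3}\left(\frac{1}{\mu_j^2}-\frac{1}{6\mu_j}\right)=\frac{1-\frac{2}{3}\sin^2\!\left(\frac{j\pi}{2(m+1)}\right)}{16(m+1)^3\sin^4\!\left(\frac{j\pi}{2(m+1)}\right)}.
\end{equation*}
Taking square roots produces the stated formula for $\sigma_{jm}$, and inserting the same eigenvectors into the formula $v_{jm}=\sigma_{jm}^{-1}\sum_l(w_{jm})_l\kappa(\xi_{lm},\cdot)$ supplied by Proposition \ref{prop0} yields the claimed expression for $v_{jm}$.

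The main obstacle is the bookkeeping in the derivation of $T_m=\frac{1}{m+1}M_m B M_m$: one must correctly evaluate the integral piece by piece on the partition induced by $\xi_{1m},\dots,\xi_{mm}$, track the boundary contributions at $y=0$ and $y=1$, and identify the coefficients of the resulting quadratic form as precisely those of the mass matrix $B$. Once this structural identity, along with the two supporting identities $M_m=\frac{1}{m+1}S^{-1}$ and $B=I-\frac{1}{6}S$, is in hand, the rest reduces to elementary trigonometric manipulation and a careful accounting of the powers of $m+1$.
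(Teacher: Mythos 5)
Your proof is correct, and it is worth noting that the paper itself does not prove this lemma at all: it is imported verbatim from the cited reference \cite{jahn2022cross}, so your argument is a self-contained derivation where the paper offers only a citation. The two pillars of your argument both check out. For the continuous part, $\min(x(1-y),y(1-x))$ equals $x(1-y)$ precisely when $x\le y$, so $\kappa$ is indeed the Dirichlet Green's function of $-d^2/dx^2$ on $(0,1)$, and the symmetry of $\kappa$ lets you identify the SVD with the spectral decomposition, giving $\sigma_j=(\pi j)^{-2}$ and $v_j=\sqrt{2}\sin(j\pi\cdot)$. For the semi-discrete part, the chain of identities is sound: each $\kappa(\xi_{im},\cdot)$ is piecewise linear with its only kink at the grid point $\xi_{im}$ and vanishes at $0$ and $1$, so it equals $\sum_l\kappa(\xi_{im},\xi_{lm})\phi_l$ in the hat-function basis, whence $T_m=M_m\bigl(\tfrac{1}{m+1}B\bigr)M_m$ with the mass matrix $\tfrac{1}{m+1}B$, $B=I-\tfrac16 S$; combined with $M_m=\tfrac{1}{m+1}S^{-1}$ this gives $T_m=\tfrac{1}{(m+1)^3}(S^{-2}-\tfrac16 S^{-1})$, and substituting $\mu_j=4\sin^2(j\pi/(2(m+1)))$ reproduces exactly the stated $\sigma_{jm}$ and, via Proposition \ref{prop0}, the stated $v_{jm}$. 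One small point you leave implicit but should record: Proposition \ref{prop0} requires the eigenvalues of $T_m$ to be listed in decreasing order, and the map $\mu\mapsto\mu^{-2}-\tfrac16\mu^{-1}$ has derivative $\mu^{-3}(\mu/6-2)<0$ for $\mu\in(0,4)$, so $\lambda_{jm}$ is indeed decreasing in $j$ (and positive, since $1-\tfrac23\sin^2\theta>0$); with that observation the indexing in the lemma is consistent and the proof is complete.
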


 In order to derive explicit error bounds for $f_{k,m}^\delta$ for the given kernel \eqref{s1:e1a}, one must impose certain smoothness conditions on $f$. Usually one assumes that $f$ belongs to some unknown source set $\mathcal{X}_{s,\rho}$ of the form 
\begin{align*}
	\mathcal{X}_{s,\rho}:&=\left\{\varphi_s\left(K^*K\right)h~:~h\in L^2(0,1),~\|h\|\le\rho\right\}=\left\{ \sum_{j=1}^\infty \varphi_s\left(\sigma_j^2\right) (h,v_j) v_j~:~h\in L^2(0,1),~\|h\|\le\rho\right\}
\end{align*}
and $\varphi_s(t):=t^\frac{s}{2}$, where $s,\rho>0$ are unknown parameters. For functions in $\mathcal{X}_{s,\rho}$ we can indeed give quantitative estimates for the approximation error. We do this in Section \ref{s3:s1} and discuss their relation to Sobolev smoothness. To determine the estimator $f_{k,m}^\delta$, we must first choose the truncation index $k\le m$. The optimal choice for $k$ minimizes the expected distance to $f\in\mathcal{X}_{s,\rho}$ and gives the error
\begin{equation}\label{s1:e2c}
	{\rm err}(\delta,m,s,\rho):=\min_{k= 0,...,m} \sup_{f\in\mathcal{X}_{s,\rho}}\E\|f_{k,m}^\delta-f\|^2\asymp \left(\frac{\delta^2}{m}\right)^\frac{4s}{5+4s}(\rho^2)^\frac{5}{5+4s} +\Delta_m^{2},
\end{equation}
where $\Delta_m$ denotes the discretization error {$\|f-P_{\mathcal{N}(K_m)^\perp}f\|$, and $P_{\mathcal{N}(K_m)^\perp}:L^2(0,1)\to \mathcal{N}(K_m)^\perp$ is the projection onto the ansatz space, see \eqref{ansatz}}. Note here that the $k$ that minimizes \eqref{s1:e2c} depends on the smoothness parameters $s$ and $\rho$, which are usually unknown.  Finally, note that the first term in \eqref{s1:e2c} is {an} universal optimal error bound. This means that no reconstruction method can yield a smaller error over $\mathcal{X}_{s,\rho}$ uniformly, at least if one ignores the multiplicative constant. This was proved for example in \cite{mathe2017complexity}, where an asymptotically statistically equivalent functional white noise model (with variance $\delta^2/m$) is considered. In this model one starts from noisy linear functionals of the exact data instead of noisy point evaluations, which allows a discretization-free analysis. 

{ The remainder of the article is structured as follows: In Section \ref{s3:s1} we will introduce our new method and show that it attains the same error rate \eqref{s1:e2c}. Then, in Section \ref{s5}, the method is extended to general integral equations, discretized by means of a quadrature rule. In Section \ref{adap} we discuss an adaptive choice for determining the optimal $k$ using only the noisy point evaluations and the noise level $\delta$. The theoretical results are supported by numerical experiments in Section \ref{s6}. Finally, the formal proofs are given in the appendix.}
\section{Main results}\label{s3}
In this section we introduce our approach in more detail and state the main error bounds. We first consider the special case given in \eqref{s1:e1a} and then extend to general Fredholm equations.
\subsection{Results for the kernel with known spectral decomposition}\label{s3:s1}
For the kernel given in \eqref{s1:e1a}, we have the following simple relation between the abstract smoothness defined by the source set $\mathcal{X}_{s,\rho}$ and the classical smoothness, see Proposition 3.9 in \cite{jahn2022cross}: For $f\in\mathcal{X}_{s,\rho}$, if $s>\frac{3}{4}$, then $f$ is differentiable and if $s>\frac{5}{4}$, then $f$ is twice differentiable. Thus, to derive our main error estimates, we assume in this section that
\begin{equation}\label{s2:e1}
	f\in\mathcal{X}_{s,\rho} \quad \mbox{with}\quad s> 3/4.
\end{equation}
Note at this point that this is a substantial prerequisite for $f$, which allows us to derive simple bounds on the discretization error {quantifying} how well the unknown $f$ can be approximated in $\mathcal{N}(K_m)^\perp$. 

The main idea of our method is that the initial discretization given by the $m$ noisy point evaluations may be unnecessarily fine relative to the data noise $\delta$ and the unknown solution $f$. To this end, $\E\|f_{k,m}^\delta-f\|^2$ is split into three terms, first a data propagation error coming from the noise on the point evaluations, then an approximation error of the projected unknown solution, and finally a discretization error. While the first two terms depend, among other things, on the level of truncation $k$, the last term depends only on the parameter of the discretization dimension $m$. Roughly speaking, if for the minimizing $k$ the first two terms dominate the last term, then we see that the initial discretization based on the design matrix of size $m\times m$ was unnecessarily large. Let us be more precise: With $f_m:=\sum_{j=1}^m(f,v_{jm})v_{jm}$ denoting the projection of $f$ onto $\mathcal{N}(K_m)^\perp$, the error decomposition is 
\begin{align}\notag
	f_{k,m}^\delta-f&= \sum_{j=1}^k \frac{\left(g^\delta_m,u_{jm}\right)}{\sigma_{jm}} v_{jm} - \sum_{j=1}^k(f,v_{jm})v_{jm} - \sum_{j=k+1}^m(f,v_{jm})v_{jm} + f_m-f\\\label{err:decomp}
	&=\sum_{j=1}^k \frac{\left(g^\delta_m-K_m f,u_{jm}\right)}{\sigma_{jm}} v_{jm} - \sum_{j=k+1}^m(f,v_{jm})v_{jm}  + f_m-f,
\end{align}
since $(f,v_{jm})=(f,\sigma_{jm}^{-1}K_m^*u_{jm}) = \sigma_{jm}^{-1}(K_mf,u_{jm})$. From orthogonality we have
\begin{align}\label{s2:e11}
	\E[\|f_{k,m}^\delta-f\|^2] &= \delta^2\sum_{j=1}^k\frac{1}{\sigma_{jm}^2}  + \sum_{j=k+1}^m (f,v_{jm})^2 + \|f_m-f\|^2,
\end{align}
and with the source condition \eqref{s2:e1} and Lemma \ref{lem0} we obtain the explicit upper bound 
\begin{equation}\label{s3:e1a}
	\E[\|f_{k,m}^\delta-f\|^2] \le C \left(\frac{k^5}{m}\delta^2 + k^{-4s}\rho^2 + \frac{\|f'\|^2}{m^2}\right)
\end{equation} 
for the error, where $C>0$ is a constant. This will be shown in the proof of Theorem \ref{t2} below. It will {moreover} be shown there that essentially the right-hand side is also a lower bound in many cases, up to a constant factor. 
{  
\begin{remark}\label{rem:disc:err}
    Note that if $f$ is twice differentiable, which is the case for example when $s>\frac{5}{4}$, we can replace $\frac{\|f'\|^2}{m^2}$ by $\frac{\|f''\|^2}{m^4}$, see \cite{jahn2022cross}. An often used alternative decomposition of the error, applicable in general situations, see e.g. \cite{nair2007regularized}, is based on operator monotone functions \cite{mathe2006regularization}. Due to the Lipschitz continuity of $\kappa$, one can show that $\|(m+1)^{-1}(K_m^*K_m)-K^*K\|\le m^{-1}$. Then, for $f=(K^*K)^\frac{s}{2}h$, with $h\in L^2(0,1)$ and $\|h\|\le \rho$, $s\le 2$, the terms $-\sum_{j=k+1}^m(f,v_{jm})v_{jm} + f_m -f$ in \eqref{err:decomp} are replaced by
    \begin{align*}
        -\sum_{j=k+1}^m\left(\left(\frac{K_m^*K_m}{m+1}\right)^\frac{s}{2}h,v_{jm}\right)v_{jm} + (I-P_{{\rm span}(v_{1m},...,v_{km})})\left(\left(\frac{K_m^*K_m}{m+1}\right)^\frac{s}{2}h-(K^*K)^\frac{s}{2}h\right),
    \end{align*}
    with the corresponding (quadratic) upper bound $$2k^{-4s}\rho^2 + 2\left(\|(m+1)^{-1}K_m^*K_m-K^*K\|\right)^s\|h\|^2\le 2k^{-4s}\rho^2+m^{-s}\rho^2.$$ In our case, the discretization error is of order $m^{-2}\|f'\|^2$ for $\frac{3}{4}<s<1$, and of order $m^{-4}\|f''\|^2$ for $\frac{3}{4}<s\le 2$, which is clearly an improvement. 
\end{remark}
}
In our error bound \eqref{s3:e1a} we can identify the variance, also called the data propagation error, and the bias part, consisting of an approximation error and a discretization error. As $k$ increases, the first term increases while the second term decreases. As $m$ increases, the first and third terms decrease. 

As explained above, our approach is based on the observation that $k$ only affects the first two terms in the error decomposition. It is easy to see that the minimizing $k$ roughly balances the first two terms (and thus satisfies $k^{5+{ 4}s}\asymp \frac{m\rho^2}{\delta^2}$). If the contribution of the third term (the discretization error) is much smaller, then the initial discretization dimension $m$ (for the design matrix), i.e. the number of point evaluations, was unnecessarily high. Thus, there is hope to somehow reduce $m$ without spoiling the overall error rate. But directly reducing the parameter $m$ (which would mean simply discarding some of the measurements) increases the first and third terms (with the effect being more pronounced for the third term).

Instead, our proposed strategy is based on averaging certain components of the measured data, which can keep the size of the first two terms constant while decreasing $m$. The idea of averaging point evaluations from a fine grid to obtain (approximations of) point evaluations on a coarser grid is borrowed from the recent preprints \cite{jahn2022discretisation, jahn2023noise}, where a combination of a data-driven regularization method and an adaptive choice of the discretization dimension, called discretization-adaptive regularization \cite{jahn2021optimal, jahn2022probabilistic}, was numerically tested and performed well. In the present article we shed light on this observed phenomenon and give a rigorous justification.  The reasoning is that if not too many components are averaged, then due to the smoothness of the uncorrupted data $g$, a reduction of the stochastic noise on the point evaluations is obtained. Of course, this depends on the concrete relationship between $\delta,m$ and the unknown data $g$.

We explore this approach explicitly. To do so, we define for $o\in\N$, so that $m_o:=m/o\in\N$, the averaged data
\begin{align*}
	\overline{g}^\delta_{m_o} =\begin{pmatrix} \frac{\sum_{j=1}^{ o} \left(g^\delta_{m}\right)_{(i-1){ o}+j}}{{ o}}\end{pmatrix}_{i=1,...,m_o}\in\R^{m_o}.
\end{align*}
Similarly, we define our averaged spectral cut-off estimator as
\begin{align*}
	\overline{f}^\delta_{k,m_o}:= \sum_{j=1}^k \frac{\left(\overline{g}^\delta_{m_o},u_{jm_o}\right)_{\R^{m_o}}}{\sigma_{jm_o}}v_{jm_o} = \sum_{l=1}^{m_o} \overline{\alpha}_l \kappa(\xi_{lm_o},\cdot)
 \end{align*}
 with
 \begin{align*}
\overline{\alpha}_l:=\overline{\alpha}_l(k,m,\delta)=\sum_{j=1}^k\frac{(\overline{g}_{m_o}^\delta,w_{jm_o})_{\R^{m_o}}}{\lambda_{jm_o}} (w_{jm_o})_l.
\end{align*}
We briefly mention here that the calculation of the averaged data $
\overline{g}^\delta_{m_o}$ costs $m$ operations (regardless of the value of ${ o}$), since each entry of the initial data $g^\delta_m$ is touched once. This is cheap compared to the cost we face when we have to approximate $T_m$ and its eigenvalue decomposition for general kernels in the next section. We define the minimax error of the averaged estimator as 
\begin{equation*}
	\overline{\rm err}(\delta,m_o,s,\rho):=\min_{k={ 0},...,{ m_o}}\sup_{f\in\mathcal{X}_{s,\rho}}\E\|\overline{f}_{k,m_o}^\delta-f\|^2.
\end{equation*}
The following theorem tells us how much data we can average without spoiling the overall error rate.
\begin{theorem}\label{t2}
	For
 {  
 \begin{equation}\label{t2:def}
O_{m,\delta,\rho}:=\left\{o \in \N~:~o\le \max\left(\sqrt{\frac{(m+1)\delta^2}{\rho^2}},1\right),~\frac{m}{o}\in\N\right\}
 \end{equation}}
 and $m_o=m/o, o\in O_{m,\delta,\rho}$,  there holds
		\begin{equation}\label{t2e}
		c\cdot{\rm err}(\delta,m,{s},\rho)\le \overline{\rm err}(\delta,m_{o},s,\rho)\le C\cdot {\rm err}(\delta,m,s,\rho)
	\end{equation}
with constants
$$
c:=\frac{16}{\pi^{4s+4}\left(3\pi^4 + 1/2\right)}\qquad\mbox{and}\qquad C:=\frac{15 \pi^{4s+8}}{16}.
$$
 \end{theorem}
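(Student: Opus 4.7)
The plan is to prove both inequalities starting from a four-term error decomposition for the averaged estimator that is the exact analogue of \eqref{err:decomp}--\eqref{s2:e11}. Writing $\overline{g}^\delta_{m_o} = K_{m_o} f + (\overline{K}_{m_o} - K_{m_o})f + \delta \overline{Z}$, where $\overline{K}_{m_o}\colon L^2(0,1)\to\R^{m_o}$ is the block-average operator defined by $(\overline{K}_{m_o} f)_i := o^{-1}\sum_{l=1}^o (Kf)(\xi_{(i-1)o+l, m})$ and $\overline{Z}_i := o^{-1}\sum_{l=1}^o Z_{(i-1)o+l}$ has covariance $o^{-1} I_{m_o}$, I would substitute into the definition of $\overline{f}^\delta_{k, m_o}$, use the identity $(f, v_{jm_o}) = \sigma_{jm_o}^{-1}(K_{m_o} f, u_{jm_o})$, and exploit the mutual orthogonality of $\mathrm{span}(v_{1,m_o},\dots,v_{k,m_o})$, $\mathrm{span}(v_{k+1,m_o},\dots,v_{m_o,m_o})$ and $\mathcal{N}(K_{m_o})$ to obtain
\begin{equation*}
\E\|\overline{f}^\delta_{k,m_o} - f\|^2 = \frac{\delta^2}{o}\sum_{j=1}^k \sigma_{jm_o}^{-2} + \sum_{j=1}^k \frac{\bigl((\overline{K}_{m_o} - K_{m_o})f,\, u_{jm_o}\bigr)^2}{\sigma_{jm_o}^2} + \sum_{j=k+1}^{m_o}(f, v_{jm_o})^2 + \|f - f_{m_o}\|^2.
\end{equation*}

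For the upper bound, Lemma \ref{lem0} applied with $m_o$ in place of $m$ gives $\sigma_{jm_o}^{-2}\lesssim j^4/m_o$, so the stochastic term is of order $\delta^2 k^5 / (o m_o) = \delta^2 k^5/m$, matching the noise contribution in \eqref{s3:e1a}. The truncation sum $\sum_{j>k}(f,v_{jm_o})^2\lesssim k^{-4s}\rho^2$ follows from the source condition exactly as before. The discretisation term $\|f-f_{m_o}\|^2\lesssim \|f'\|^2/m_o^2 = o^2\|f'\|^2/m^2$ is absorbed into the noise term by combining $o^2\le (m+1)\delta^2/\rho^2$ with $\|f'\|\lesssim \rho$ (valid since $s>3/4$), which bounds it by $O(\delta^2/m)$. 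For the remaining averaging-bias term, smoothness of $g=Kf$ (in particular $\|g'\|_\infty\lesssim\|f\|\lesssim\rho$) yields $\|(\overline{K}_{m_o}-K_{m_o})f\|_{\R^{m_o}}^2\lesssim (o/m)\|g'\|_\infty^2$ via a Taylor expansion around each block of $o$ adjacent fine-grid nodes, and multiplying by the crude bound $\sum_{j\le k}\sigma_{jm_o}^{-2}\lesssim k^5/m_o$ together with the constraint \eqref{t2:def} keeps the whole contribution within the desired order. Inserting the same minimiser $k$ as in \eqref{s1:e2c} then produces $\overline{\rm err}\le C\cdot{\rm err}$.

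For the lower bound, two estimates are combined. First, the averaged data defines a statistical experiment with effective variance $\delta^2/(o m_o) = \delta^2/m$, so the universal minimax lower bound for linear inverse problems (as used for \eqref{s1:e2c}) provides $\overline{\rm err}\ge c_1 (\delta^2/m)^{4s/(5+4s)}\rho^{10/(5+4s)}$, matching the first summand of \eqref{s1:e2c}. Second, since $\overline{f}^\delta_{k,m_o}\in\mathcal{N}(K_{m_o})^\perp$, the worst-case bias over $\mathcal{X}_{s,\rho}$ is at least $\sup_f\|f-f_{m_o}\|^2$, and a direct comparison using the explicit singular functions of Lemma \ref{lem0} shows this to be at least $c_2\Delta_m^2$. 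Adding both estimates and using $a+b\le 2\max(a,b)$ yields $\overline{\rm err}\ge c\cdot{\rm err}$.

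The main difficulty will be the averaging-bias contribution: a naive estimate via $\|(\overline{K}_{m_o}-K_{m_o})f\|^2\cdot\sigma_{km_o}^{-2}$ picks up an unfavourable factor $k^4/m_o$, and it is only the precise quadratic budget $o^2\le (m+1)\delta^2/\rho^2$ from \eqref{t2:def} that keeps it at the optimal rate; tracking the sharp constants $c$ and $C$ requires keeping careful bookkeeping in this step. A secondary subtlety is the discretisation comparison inside the lower bound, since the piecewise-linear ansatz spaces $\mathcal{N}(K_m)^\perp$ and $\mathcal{N}(K_{m_o})^\perp$ are not nested in general; here once more the explicit trigonometric basis from Lemma \ref{lem0} is essential to trade $\sup_f\|f-f_{m_o}\|^2$ against $\Delta_m^2$ with the concrete constants quoted in the statement.
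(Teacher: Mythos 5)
Your upper-bound half is essentially the paper's argument: the same four-term decomposition (reduced variance $\delta^2/o$, averaging bias, truncation, discretization), the same Taylor/Cauchy--Schwarz bound $\|(\overline{K}_{m_o}-K_{m_o})f\|^2\lesssim (o/m)\|g'\|^2$, and the same use of the budget $o^2\rho^2\le (m+1)\delta^2$ to absorb both the averaging bias and the coarser discretization error $\rho^2/(m_o+1)^2$ into the variance term. (You should still treat the edge cases the paper handles explicitly: $o=1$, where the two estimators coincide and the budget inequality need not hold, and $k=0$, where the variance term vanishes and the discretization error must instead be absorbed into $\rho^2(k+1)^{-4s}$.)

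The lower-bound half has a genuine gap. The theorem asserts $c\cdot{\rm err}(\delta,m,s,\rho)\le\overline{\rm err}(\delta,m_o,s,\rho)$ with the explicit constant $c=16/(\pi^{4s+4}(3\pi^4+1/2))$, which in the paper arises as $c'/C'$ from a two-sided, per-$k$ estimate
$c'\bigl(k^5\delta^2/(m+1)+\rho^2(k+1)^{-4s}\bigr)\le\sup_f\E\|f^\delta_{k,m}-f\|^2\le C'\bigl(\cdots\bigr)$,
proved with the explicit singular values of Lemma \ref{lem0} and the test function $\varphi_s(\sigma_{k+1}^2)\rho\, v_{k+1}$. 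Your route via the ``universal minimax lower bound'' cannot deliver this: the paper invokes that result only for an \emph{asymptotically} statistically equivalent white-noise model, with no tracked constant, and it bounds $\overline{\rm err}$ against the continuous rate rather than against ${\rm err}(\delta,m,s,\rho)$ itself, so converting it into \eqref{t2e} would require re-proving the upper half of \eqref{s1:e2c} with constants anyway. Your second ingredient, $\sup_f\|f-f_{m_o}\|^2\ge c_2\Delta_m^2$, is both unproven (you correctly note the grids $\{l/(m+1)\}$ and $\{l/(m_o+1)\}$ are not nested) and unnecessary: for $o\ge 2$ the membership $o\in O_{m,\delta,\rho}$ forces $(m+1)\delta^2\ge\rho^2$, whence $\Delta_m^2\le\rho^2/(2(m+1)^2)$ is dominated by $k^5\delta^2/(m+1)$ for every $k\ge 1$ (and by $\rho^2(k+1)^{-4s}$ for $k=0$), so $\Delta_m^2$ never has to be matched from below. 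The paper instead simply drops the bias and discretization terms from the averaged risk, bounds the remaining variance and approximation terms below by the \emph{same} bracket (using $o\le m$ to pass from $\delta^2/(o(m_o+1))$ to $\delta^2/(m+1)$, and treating $k=m_o$ and $k=m$ separately since the approximation term vanishes there), and then compares $\min_{k\le m}$ with $\min_{k\le m_o}$ of that bracket: for $k'>m_o$ the bracket already exceeds half its value at $k=m_o$, again by $o^2\rho^2\le(m+1)\delta^2$. This reconciliation of the two different minimization ranges is the step where the definition \eqref{t2:def} actually enters the lower bound, and it is absent from your proposal.
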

Clearly, one is in particular interested in determing the largest number in $O_{m,\delta,\rho}$, which we call $o_m$ for brevity. Note that finding this number is a problem in practice, since $\rho$ is typically unknown. As demonstrated and explained below in the numerical part in Section \ref{s6}, an alternative reasonable choice for $o_m$ would be {  to replace $\sqrt{\frac{(m+1)\delta^2}{\rho^2}}$ with $\left(\frac{(m+1)^2\delta^2}{\|g'\|^2}\right)^\frac{1}{3}$ in \eqref{t2:def}}. Here, to determine $\|g'\|$ is an inverse problem itself, but a fairly simple one.   
\subsection{Extension to general Fredholm integral equations}\label{s5}
We now explain how our approach can be applied to various Fredholm integral equations. Consider the general integral equation 
\begin{equation}\label{s3:e0}
	Kf=g,
\end{equation}
where    
\begin{equation}\label{s3:e1}
	g(x)=(Kf)(x)=\int_0^1\kappa(x,y)f(y)\mathrm{d}y. 
\end{equation}
The integral kernel $\kappa$ is assumed to be two times differentiable. The adjoint of $K$ is given as $(K^*g)(y) = \int_0^1 {\kappa}(x,y)g(x)\mathrm{d}x$. Again, discretization on a grid given by the points $\xi_{1m},...,\xi_{mm}$ yields a semi-discrete operator, which we normalize this time. We then have
\begin{align}\label{s3:e000}
	K_m:L^2(0,1)&\to\R^m\\\notag
	f &\mapsto \left( {m^{-\frac{1}{2}}}(Kf)(\xi_{lm})\right)_{l=1}^m = \frac{1}{\sqrt{m}}\left(\int \kappa(\xi_{lm},y) f(y)\mathrm{d}y\right)_{l=1}^m.
\end{align}
We emphasize that, in general, we now can no longer evaluate the integrals exactly. Instead, we use midpoint collocation as a quadrature rule, i.e. we have weights $q_{1m}=...=q_{mm}=m^{-1}$ and knots $\xi_{jm}:=\frac{2j-1}{2m}$, $j=1,...,m$. We define $A_m\in\R^{m \times m}$ by $\left(A_m\right)_{ij}=q_{jm}\kappa(\xi_{im},\xi_{jm}) = \frac{1}{m}\kappa(\xi_{im},\xi_{jm})$. This method is taken from the popular open source toolbox \cite{Hansen:2007} and is used in the numerical examples in the following section.\footnote{Clearly, the results can be generalized to more general quadrature rules. In that case the normalization of $K_m$ should be adapted.} 
The midpoint collocation method has degree of exactness one, which implies that for any $h\in\mathcal{C}^2([0,1])$, there holds
\begin{equation*}
	\left|\int h(x){\rm d}x - \frac{1}{m}\sum_{i=1}^mh(\xi_{im})\right|\le \frac{1}{24} \frac{\|h^{(2)}\|_\infty}{m^2}.
\end{equation*}
Therefore, we make the following assumption about the smoothness of the integral kernel $\kappa${  : 
\begin{assumption}\label{a1}
	$K_m$ has full rank and $\kappa\in\mathcal{C}^2([0,1]^2)$ with 
 \begin{equation}\label{a1e1}
 C_K:=\sup_{x,y}\max_{\substack{n,n'\le 2\\n+n'\le 2}}|\partial^n_x \partial^{n'}_y{\kappa}(x,y)|.
 \end{equation}
\end{assumption}}
Similarly to the design matrix $T_m$ from the previous section, we set
		\begin{equation}\label{designmat}
		\left(S_m\right)_{ij=1}^m:=\frac{1}{m}\left( \int \kappa(x,\xi_{im})\kappa(x,\xi_{jm}){\rm d}x\right)_{ij=1}^m\in\R^{m\times m}.
	\end{equation}
Then there holds
\begin{align*}
\left(A_m^TA_m\right)_{ij} - \left(S_m\right)_{ij} &= \frac{1}{m}\left( \frac{1}{m}\sum_{l=1}^m \kappa(\xi_{lm},\xi_{im})\kappa(\xi_{lm},\xi_{jm}) - \int \kappa(x,\xi_{im})\kappa(x,\xi_{{j}m}){\rm d}x\right)=\mathcal{O}\left(\frac{1}{m^3}\right).
\end{align*}
In light of the above results, we base our approximation on the discrete singular value decomposition of $A_m\in\R^{m\times m}$ (which is effectively the eigenvalue decomposition of $A_m^TA_m\approx S_m$), which we denote by $(\tilde{z}_{jm},\tilde{w}_{jm},\tilde{\sigma}_{jm})$. Note that unlike in the previous section, where we could give closed form expressions for the eigenvalues and eigenvectors of $T_m$, here we have to rely on numerical algorithms that only approximate the singular value decomposition of $A_m$. Since there are algorithms that approximate the singular value decomposition to machine precision, we will not distinguish between the exact singular value decomposition of $A_m$ and its numerically approximated counterpart. However, we will discuss the computational cost of the numerical approximation in detail below. { For $\tilde{\sigma}_{jm}>0$} we define 
\begin{equation}\label{sing}
	\tilde{v}_{jm}(\cdot):=\frac{1}{\tilde{\sigma}_{jm}\sqrt{m}}\sum_{l=1}^m (\tilde{w}_{jm})_l \kappa(\xi_{lm},\cdot)\in L^2(0,1),
\end{equation}
which approximates the singular function $v_{jm}$ of the semi-discrete operator \eqref{s3:e000} (for $j$ small enough). We mention here that, in contrast to the exact singular functions $v_{jm}$, the functions $\tilde{v}_{jm}$ form an orthonormal basis only approximately and for $j$ small, see Lemma \ref{lem002} below. We now use \eqref{sing} to denote our (discrete) spectral cutoff estimator for the unknown $f$ as 
\begin{equation}\label{est1}	\tilde{f}^\delta_{k,m}:=\sum_{j=1}^k\frac{(g^\delta_m,\tilde{w}_{jm})_{\R^m}}{\sqrt{m} \tilde{\sigma}_{jm}} \tilde{v}_{jm},
\end{equation}
{ under the condition that $\tilde{\sigma}_{jm}>0$ for $j=1,...,k$. We later show that this condition is fulfilled for $k$ sufficiently small compared to $m$}. Similar to the previous section, expressing the estimator $\tilde{f}_{k,m}^\delta$ in the basis $\{\kappa(\xi_{lm},\cdot),~l=1,...,m\}$ yields
\begin{align}\label{est1:disc}
 \tilde{f}^\delta_{k,m} = \sum_{l=1}^m \tilde{\alpha}_l\kappa(\xi_{lm},\cdot)\qquad \mbox{with}\qquad \tilde{\alpha}_l:={\tilde{\alpha}_l(k,m,\delta)}= \sum_{j=1}^k \frac{(g^\delta_m,\tilde{w}_{jm})}{m\tilde{\sigma}_{jm}^2} (\tilde{w}_{jm})_l.
\end{align}
Before analyzing the accuracy of the estimator $\tilde{f}^\delta_{k,m}$, we discuss its computational cost. If the $k$ leading singular values and vectors of $A_m$ can be derived explicitly, the $k$ quantities $(g_m^\delta,\tilde{w}_{jm})_{\R^m}/(m\tilde{\sigma}_{jm}^2)$, with $j=1,...,k,$ cost $2m+1$ operations each, and the $m$ coefficients $\tilde{\alpha}_l$, with $l=1,...,m,$ cost $m(2k-1)$ operations altogether. Next, we discuss the cost of computing the leading singular values and vectors of $A_m$ numerically. Highly stable and accurate methods for doing this transform the matrix $A_m$ first into a tridiagonal matrix, then compute the full singular value decomposition of this matrix, and finally keep only the leading $k$ values and vectors, see e.g. \cite{golub2013matrix}. These algorithms typically require $\mathcal{O}(m^3)$ operations and are therefore very expensive. Since we are only interested in the $k$ leading values and vectors, it is natural to consider methods that determine them directly. Typically, these methods are much cheaper, but also less accurate. We discuss some of them in the following.  If matrix-vector multiplications can be computed quickly, Krylov subspace methods such as the Lanczos or Arnoldi algorithm are commonly used. However, summarizing the computational costs with corresponding accuracy guarantees is challenging because these methods are inherently numerically unstable.  The specific costs depend heavily on the matrix properties and the effort spent to stabilize the routines. For more information, we refer to the survey article \cite{halko2011finding}. As a (usually overly optimistic) rule of thumb, the typical cost of such methods is proportional to $k C^{\rm mult}_m + m k^2$, where $C^{\rm mult}_m$ denotes the cost of an exact or at least approximate matrix-vector multiplication with the initial matrix $A_m$. Apart from the setting with sparse matrix $A_m$, fast multipole expansions \cite{greengard1987fast} or $H$ (and $H^2$) matrix arithmetic \cite{hackbusch1999sparse} can reduce $C^{\rm mul}_m$ to the order of $\log(m)m$ or even $m$. However, this requires the kernel to be asymptotically smooth. Also, as mentioned above, additional work is required to stabilize the resulting methods, and exact error guarantees are rare. Therefore, we focus our discussion on the use of provably stable two-step procedures to determine the truncated spectral decomposition, see \cite{halko2011finding} and \cite{kishore2017literature} for overviews. These procedures work as follows: First, an approximate basis of size $n<m$ for the range of $A_m$ is computed and used to determine a rank-$n$ approximation of $A_m$, which we call $A_m^{(n)}$. Second, the full singular value decomposition of this approximation $A_m^{(n)}$ is computed by established methods \cite{golub2013matrix} and the leading $k$ values and vectors are used to approximate $\tilde{\sigma}_{jm}$ and $\tilde{w}_{jm}$ for $j=1,\dots,k$. To do this, the auxiliary parameter $n$ must be chosen much larger than the target rank $k$ to ensure that the leading $k$ singular values and vectors of $A_m^{(n)}$ approximate the leading eigenvalues and vectors of $A_m$ with sufficient accuracy, see \eqref{comp:1} below\footnote{We emphasize that this is in sharp contrast to many classical settings where the goal is simply to find a near-optimal rank-$k$ approximation of $A_m$. Here typically $n=k+p$ where  $p$ is a small oversampling parameter.}. Several methods are available to perform the first step, e.g. based on rank-revealing QR decompositions \cite{gu1996efficient}, random projections \cite{rokhlin2010randomized}, or cross/skeleton approximations \cite{tyrtyshnikov2000incomplete, bebendorf2000approximation}. Using rank-revealing QR-pivoting or random projections for the first step requires roughly $nm^2$ operations. In the case of the cross/skeleton approximation, this can be reduced to $\mathcal{O}(nm)$, but it requires that the integral kernel be approximately smooth. The second step, the determination of the full singular value decomposition of $A_m^{(n)}$, costs $\mathcal{O}(n^2m)$ and thus the total cost is of the order of $\mathcal{O}\left(n C_m + m n^2\right)$, where $C_m$ depends on the approximation approach and the regularity property of the specific kernel. Overall, depending on the actual kernel $\kappa$, the computational cost for \eqref{disc:est} is somewhere between $n^2m$ and $nm^2$. This brings us to the choice of $n$. Let $\tilde{w}_{jm}^{\rm approx}$ and $\tilde{\sigma}_{jm}^{\rm approx}$ denote the $j$-th singular value and right singular vector of $A_m^{(n)}$, and let
 $\tilde{f}^{\delta,{\rm approx}}_{k,m}$ and $\tilde{\alpha}^{\rm approx}\in\R^m$ be defined as in \eqref{est1:disc}. Theorem 4.2 from \cite{ito2019regularized} shows that for $k$ not too large
\begin{equation}\label{comp:1}
\frac{\left\|\tilde{f}^{\delta,{\rm approx}}_{k,m}-\tilde{f}_{k,m}^\delta\right\|}{\left\|\tilde{f}_{k,m}^\delta\right\|} = \mathcal{O}\left(\frac{\|A_m-A_m^{(n)}\|}{\tilde{\sigma}_{km}^2}\right).
\end{equation}
Furthermore, for the two-step procedure, the total approximation error is usually about\footnote{Depending on the decay and gaps of the spectrum, in some cases the first factor can be reduced to logarithmic terms.} 
 \begin{equation}\label{comp:2}
 \|A_m-A_m^{(n)}\|=\mathcal{O}\left(\sqrt{n m}\tilde{\sigma}_{n+1m}\right).
 \end{equation}
  Thus the value $n$ must be chosen such that at least $\tilde{\sigma}_{n+1m}=\mathcal{O}\left( \tilde{\sigma}_{km}^2(mn)^{-\frac{1}{2}}\right)$.
For example, suppose\footnote{In contrast to the previous section, there is no substantial dependence on $m$, since $K_m$ and $A_m$ are normalized here.} $\tilde{\sigma}_{jm}^2 {=} j^{-q}$ with $q>1$. Approximating $\tilde{f}_{k,m}^\delta$ up to the relative error $\varepsilon$ requires $\tilde{\sigma}_{n+1m} \approx \varepsilon \tilde{\sigma}_{km}^2 (nm)^{-\frac{1}{2}}$, which implies $n\approx k^\frac{2q}{q-1} \varepsilon^{-\frac{2}{q-1}} m^\frac{1}{q-1}$. Therefore, between $k^\frac{4q}{q-1} \varepsilon^{-\frac{4}{q-1}} m^{1+\frac{2}{q-1}}$ and $ k^\frac{2q}{q-1} \varepsilon^{-\frac{2}{q-1}} m^{2+\frac{1}{q-1}}$ operations are necessary to determine $\tilde{f}_{k, m}^\delta$ up to the relative error $\varepsilon$.  
Note that the optimal choice for $k$ that minimizes the error $\|\tilde{f}_{k{,}m}^\delta-f\|$ will depend on $m$, the noise level $\delta$, and the regularity of the exact solution. In the following we will again investigate the impact of using averaging to deduce a modified spectral cut-off estimator with reduced computational cost. The cost reduction is due to the fact that the modified estimator is based on $A_{m_o}$ with $m_o< m$. This clearly improves the computational cost discussed in the above paragraph, since the averaging itself costs only $m$ operations. 

As a next main result, we compute the error of our estimator. For this purpose, the following auxiliary lemma {estimates the difference between the first} $m$ singular values $\sigma_j$ of the continuous operator $K$ {and the singular values} $\tilde{\sigma}_{jm}$ of the quadrature matrix $A_m$.
\begin{lemma}\label{lem00}
	Let $(\tilde{\sigma}_{jm})_{j\le m}$ be the singular values of $(A_m)_{ij\le m} = \frac{1}{m}\kappa(\xi_{im},\xi_{jm})$. Then, it holds that 
		\begin{equation*}
		\left|\sigma_j^2-\tilde{\sigma}_{jm}^2\right|\le \frac{C_K^2}{3 m^2}
	\end{equation*}
		for $j=1,...,m$.
	\end{lemma}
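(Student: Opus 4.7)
The plan is to interpolate between $\sigma_j^2$ and $\tilde{\sigma}_{jm}^2$ through the singular values $\sigma_{jm}^2$ of the semi-discrete operator $K_m$ from \eqref{s3:e000}, and to convert two operator-norm bounds into eigenvalue bounds via the min-max principle. Recall that, since $K_m$ has full rank (Assumption \ref{a1}), the $j$-th largest eigenvalue of $K_m^*K_m:L^2(0,1)\to L^2(0,1)$ equals the $j$-th largest eigenvalue of $K_mK_m^*=S_m$ for $j\le m$, i.e.\ equals $\sigma_{jm}^2$.

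First I would show $\|K_m^*K_m-K^*K\|_{\mathrm{op}}=\mathcal{O}(C_K^2/m^2)$. Since both operators are self-adjoint, it suffices to control $\langle(K_m^*K_m-K^*K)f,f\rangle=\tfrac{1}{m}\sum_l(Kf)^2(\xi_{lm})-\int_0^1(Kf)^2(x)\,\mathrm{d}x$ for $\|f\|\le 1$. This is exactly the midpoint-rule error applied to $h(x):=(Kf)(x)^2$, which from the stated degree-of-exactness bound is at most $\|h''\|_\infty/(24m^2)$. Using $(Kf)^{(n)}(x)=\int\partial_x^n\kappa(x,y)f(y)\,\mathrm{d}y$ together with Cauchy--Schwarz on $[0,1]$, one obtains $|(Kf)^{(n)}(x)|\le C_K\|f\|$ for $n=0,1,2$, and therefore $\|h''\|_\infty\le 4C_K^2\|f\|^2$, yielding $\|K_m^*K_m-K^*K\|_{\mathrm{op}}\le C_K^2/(6m^2)$.

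Second, I would show $\|A_m^TA_m-S_m\|_{\mathrm{op}}=\mathcal{O}(C_K^2/m^2)$ by the same quadrature argument applied entrywise. The difference $(S_m-A_m^TA_m)_{ij}$ equals $\tfrac{1}{m}$ times the midpoint-rule error for $h_{ij}(x):=\kappa(x,\xi_{im})\kappa(x,\xi_{jm})$, and from Assumption \ref{a1} one gets $\|h_{ij}''\|_\infty\le 4C_K^2$, hence $|(S_m-A_m^TA_m)_{ij}|\le C_K^2/(6m^3)$. Passing to the operator norm via the Frobenius norm (or Schur's test on the symmetric matrix) costs a single factor of $m$, giving $\|A_m^TA_m-S_m\|_{\mathrm{op}}\le C_K^2/(6m^2)$.

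Finally, Weyl's inequality (equivalently, the min-max characterization of eigenvalues of self-adjoint operators / symmetric matrices) yields $|\sigma_j^2-\sigma_{jm}^2|\le\|K^*K-K_m^*K_m\|_{\mathrm{op}}$ for $j\le m$ and $|\sigma_{jm}^2-\tilde{\sigma}_{jm}^2|\le\|S_m-A_m^TA_m\|_{\mathrm{op}}$, and a triangle inequality combines the two contributions into $C_K^2/(3m^2)$. The only technical care needed is in the first step, where the operator $K^*K$ is infinite-dimensional while $K_m^*K_m$ has rank $m$; however the min-max principle still applies to the $j$-th largest eigenvalues for $j\le m$, so no new idea is required. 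The chief bookkeeping burden is tracking the constants $1/24$, the factor of $4$ in $\|h''\|_\infty$, and the Frobenius-to-operator-norm step, all of which must line up to produce exactly the constant $1/3$ in the stated bound.
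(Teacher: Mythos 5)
Your overall strategy is the same as the paper's: two midpoint-quadrature operator-norm estimates of size $C_K^2/(6m^2)$ each, converted into eigenvalue bounds by the Courant--Fischer (Weyl) principle and combined via the triangle inequality; even the constants agree. The quadratic-form reduction in your first step (using self-adjointness to reduce $\|K_m^*K_m-K^*K\|$ to the midpoint error for $h=(Kf)^2$) is a clean, slightly more elementary packaging of the paper's direct Cauchy--Schwarz estimate of $\|KK^*h-Q_m^*Q_mh\|$.

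There is, however, one genuine error: the identification $K_mK_m^*=S_m$. For a general non-symmetric kernel this is false. From \eqref{s3:e000} one gets $(K_mK_m^*)_{ij}=\frac{1}{m}\int\kappa(\xi_{im},y)\kappa(\xi_{jm},y)\,\mathrm{d}y$, i.e.\ integration over the \emph{second} argument of $\kappa$, whereas \eqref{designmat} defines $(S_m)_{ij}=\frac{1}{m}\int\kappa(x,\xi_{im})\kappa(x,\xi_{jm})\,\mathrm{d}x$, integration over the \emph{first}. This is precisely why the paper introduces the collocation $Q_m$ of $K^*$ in \eqref{disc:adj}, proves $S_m=Q_mQ_m^*$ (Proposition \ref{prop0mod}), and compares $Q_m^*Q_m$ with $KK^*$ rather than $K_m^*K_m$ with $K^*K$. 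As written, your chain does not close: your first bound controls $|\sigma_j^2-\lambda_j(K_mK_m^*)|$ while your second controls $|\lambda_j(S_m)-\tilde\sigma_{jm}^2|$, and $\lambda_j(K_mK_m^*)\neq\lambda_j(S_m)$ in general. The repair is local and preserves all your constants: in the second step compare $A_mA_m^T$ (not $A_m^TA_m$) with $K_mK_m^*$ (not $S_m$); entrywise this is again a midpoint error, now for $y\mapsto\kappa(\xi_{im},y)\kappa(\xi_{jm},y)$, with the same bound $4C_K^2$ on the second derivative since Assumption \ref{a1} is symmetric in the two variables, and $A_mA_m^T$ has the same eigenvalues $\tilde{\sigma}_{jm}^2$ as $A_m^TA_m$. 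With that substitution your argument is correct and essentially coincides with the paper's.
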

Before formulating the main results of this paper, we introduce two important auxiliary lemmata. The first relates approximate eigenvectors, i.e. vectors $v$ satisfying $\|K^*K v - \lambda v\|\approx 0$ for some $\lambda>0$, to exact ones.
\begin{lemma}\label{lem1}
	Let $K:\mathcal{X}\to\mathcal{X}$ be {  compact and positive semi-definite} with orthonormal eigenbasis $(v_i)_{i\in\N}$ and corresponding eigenvalues $(\lambda_i)_{i\in\N}$. Now suppose there are $v\in\mathcal{X}$ and {  $\lambda, \varepsilon>0$} such that $\|K v - \lambda v\|\le \varepsilon$. Then we have 
 \begin{equation}\label{lem1:e01}
 \min_{i\in\N}|\lambda_i-\lambda|\le \frac{\varepsilon}{\|v\|}.
 \end{equation}
 Furthermore, {  let $I:=\arg\min_{i\in\N}|\lambda_i-\lambda|$ and define $c:=\min_{i\not\in I}|\lambda_i-\lambda|$. Denote by $Pv=P_{{\rm span}\{v_i~:~i\in I\}}v$ the orthogonal projection of $v$ onto the span of $\{v_i~:~i\in I\}$. Then either {$I=\N$} and $Pv=v$, or $c>0$ and we have 
 \begin{equation}\label{lem1:e02}
 \|v\|^2\ge (v,Pv)\ge \|v\|^2 - \frac{\varepsilon^2}{c^2}.
 \end{equation}}
\end{lemma}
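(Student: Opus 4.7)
The plan is to diagonalize everything in the orthonormal eigenbasis $(v_i)$ and reduce both claims to elementary estimates on the expansion coefficients of $v$.

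First, set $\alpha_i:=(v,v_i)$, so that $v=\sum_i\alpha_i v_i$ and $\|v\|^2=\sum_i\alpha_i^2$. Using the spectral decomposition of $K$ gives
\begin{equation*}
Kv-\lambda v=\sum_i(\lambda_i-\lambda)\alpha_i v_i,
\end{equation*}
and Parseval's identity combined with the hypothesis yields
\begin{equation*}
\varepsilon^2\ge\|Kv-\lambda v\|^2=\sum_i(\lambda_i-\lambda)^2\alpha_i^2. \qquad (\ast)
\end{equation*}
This one identity carries both parts of the lemma.

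For \eqref{lem1:e01} I bound the right-hand side of $(\ast)$ from below by $\bigl(\min_i|\lambda_i-\lambda|\bigr)^2\sum_i\alpha_i^2 = \bigl(\min_i|\lambda_i-\lambda|\bigr)^2\|v\|^2$. Dividing by $\|v\|^2$ and taking square roots produces the claim. For \eqref{lem1:e02} I observe that $(v,Pv)=\|Pv\|^2=\sum_{i\in I}\alpha_i^2$, from which Bessel delivers the upper bound $(v,Pv)\le\|v\|^2$ at once. For the lower bound I restrict $(\ast)$ to the indices $i\notin I$, where $(\lambda_i-\lambda)^2\ge c^2$ by definition of $c$, yielding
\begin{equation*}
\varepsilon^2\ge\sum_{i\notin I}(\lambda_i-\lambda)^2\alpha_i^2\ge c^2\sum_{i\notin I}\alpha_i^2=c^2\bigl(\|v\|^2-(v,Pv)\bigr),
\end{equation*}
which rearranges to the desired estimate.

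The only genuinely non-algebraic point, and the main obstacle worth addressing, is justifying the dichotomy $I=\mathbb{N}$ or $c>0$ from the hypotheses. The case $I=\mathbb{N}$ trivially gives $Pv=v$. Otherwise I must argue that $c=\min_{i\notin I}|\lambda_i-\lambda|$ is in fact a minimum and is strictly positive; this is where compactness of $K$ enters. Since $\lambda_i\to 0$, the sequence $(|\lambda_i-\lambda|)_{i\in\mathbb{N}}$ accumulates only at $\lambda>0$, so for any fixed $\eta<\lambda$ only finitely many indices can satisfy $|\lambda_i-\lambda|\le m+\eta$, where $m:=\min_i|\lambda_i-\lambda|$. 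Consequently the values $\{|\lambda_i-\lambda|:i\notin I\}$ are bounded away from $m$ by a positive gap that is attained, i.e.\ $c>0$. With this spectral isolation in hand the remainder of the argument is pure algebra.
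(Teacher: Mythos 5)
Your proof is correct and follows essentially the same route as the paper's: expand $v$ in the eigenbasis, use Parseval to get $\varepsilon^2\ge\sum_i(\lambda_i-\lambda)^2(v,v_i)^2$, bound below by $\min_i|\lambda_i-\lambda|^2\|v\|^2$ for the first claim, and restrict the sum to $i\notin I$ with $(\lambda_i-\lambda)^2\ge c^2$ for the second. Your extra paragraph justifying that $c>0$ when $I\neq\N$ (via $\lambda_i\to 0$ and $\lambda>0$) addresses a point the paper only touches in a remark after the lemma rather than in its proof, and is a welcome, correct addition.
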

Note that due to compactness the identity $I=\N$ holds only if $K=0$, in which case $c$ is not properly defined since $\N \setminus I$ is empty then. The next lemma shows that the constructed functions $\tilde{v}_{jm}$ in \eqref{sing}, based on the (discrete) singular vectors of $A_m$, are indeed approximate eigenfunctions of the continuous operator $K^*K$, if $j$ is not too large. {In order to handle a possible multiplicativity of the eigenvalues of $K$, we introduce the functions $\psi_\pm:\N\to\N$ defined as $\psi_-(j) = \min\{i\ge 1~:~\sigma_i=\sigma_j$\} and $\psi_+(j) = \max\{i\ge 1~:~\sigma_i=\sigma_j\}$ and define
\begin{equation}\label{lem2:e1}
c_j:=\min\left(\sigma_{\psi_+(j)}^2-\sigma_{\psi_+(j)+1}^2, \sigma_{\psi_-(j)-1}^2-\sigma_{\psi_-(j)}^2\right),
\end{equation}
with the convention that $\sigma_0=\infty$. We define  \begin{equation}\label{lem2:mult}
 M_i:=1+\max_{j\le i}\left(\psi_+(j)-\psi_-(j)\right)
 \end{equation}
as the maximal multiplicity of the singular values up to index $i$, and define
 \begin{equation}\label{jm}
	J_m:=\max\left\{j\ge 1~:~1{ >\max\left(\frac{2C_K^2}{3c_im^2},\frac{10M_iC_K^{3}}{c_i \sigma_im^2}\right)}~\mbox{for all }~i\le j\right\},
\end{equation}
 as the index up to which the approximation is valid {(note that the second argument in $\max$ is usually dominating, unless $\sigma_i\gg 1$). It is} $J_m=\psi_+(j')$ for some $j'$, { and clearly, since $c_j\le \sigma_j^2-\sigma_{\Psi_+(j)+1}^2< \sigma_j^2$ and} by Lemma \ref{lem00},  for $j\le J_m$, we have}
\begin{equation*}
	\tilde{\sigma}_{jm}^2\ge \sigma_j^2 - |\tilde{\sigma}_{jm}^2-\sigma_j^2|\ge \sigma_j^2 - \frac{C_K^2}{3m^2}>\sigma_j^2-\frac{c_j}{2}> \frac{\sigma_j^2}{2}.
\end{equation*}
We also have the following result:
\begin{lemma}\label{lem002}
	Recall that $(A_m)_{ij}= \frac{1}{m} \kappa(\xi_{im},\xi_{jm})$ and $\tilde{v}_{jm}: =\frac{1}{\tilde{\sigma}_{jm}\sqrt{m}}\sum_{l=1}^m (\tilde{w}_{jm})_l \kappa(\xi_{lm}, \cdot)$, where $(\tilde{z}_{jm},\tilde{w}_{jm}{,\tilde{\sigma}_{jm}})$ is the singular value decomposition of $A_m$. Then, { for all $i,j\le J_m$,
		\begin{equation}\label{lem002:e1}
  |(\tilde{v}_{jm}, \tilde{v}_{im})-\delta_{ij}| \le \frac{C_K^2}{3\sigma_j\sigma_i m^2}
		\end{equation}
				and, for $P\tilde{v}_{jm}=P_{{\rm span}\{v_{\psi_-(j)},...,v_{\psi_+(j)}\}}\tilde{v}_{jm}$, i.e. the projection of $\tilde{v}_{jm}$ onto the span of $v_{\psi_-(j)},...,v_{\psi_+(j)}$, we have
				 \begin{equation}\label{lem002:e2}
			\|\tilde{v}_{jm}\|^2 \ge (\tilde{v}_{jm},P\tilde{v}_{jm}) \ge \|\tilde{v}_{jm}\|^2 - \frac{C_K^6}{c_j^2\sigma_j^2 m^4}. 
		\end{equation}}
\end{lemma}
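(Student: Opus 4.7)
The plan is to prove (1) by direct calculation of the $L^2$ inner product and (2) by invoking Lemma~\ref{lem1} on $K^*K$ with $\tilde v_{jm}$ playing the role of an approximate eigenfunction at approximate eigenvalue $\sigma_j^2$.

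For (1), I would expand $(\tilde v_{jm},\tilde v_{im})_{L^2}$ from the definition of $\tilde v_{jm}$ to obtain $(\tilde\sigma_{jm}\tilde\sigma_{im}m)^{-1}\tilde w_{jm}^T H\tilde w_{im}$, where $H_{ll'}:=\int\kappa(\xi_{lm},y)\kappa(\xi_{l'm},y)\,\mathrm dy$. Midpoint quadrature in $y$ applied to each entry, together with the bound $4C_K^2$ on the second $y$-derivative of $\kappa(\xi_{lm},\cdot)\kappa(\xi_{l'm},\cdot)$ obtained from Assumption~\ref{a1} and the product rule, yields $\|H-mA_mA_m^T\|\le C_K^2/(6m)$. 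Combining with $A_mA_m^T\tilde w_{jm}=\tilde\sigma_{jm}^2\tilde w_{jm}$ and orthonormality of the $\tilde w_{jm}$ gives the exact identity $\tilde w_{jm}^T H\tilde w_{im}=m\tilde\sigma_{im}^2\delta_{ij}+\tilde w_{jm}^T(H-mA_mA_m^T)\tilde w_{im}$, so after dividing one finds $(\tilde v_{jm},\tilde v_{im})-\delta_{ij}$ bounded by $C_K^2/(6m^2\tilde\sigma_{jm}\tilde\sigma_{im})$. Since $i,j\le J_m$, Lemma~\ref{lem00} forces $\tilde\sigma_{jm}\tilde\sigma_{im}\ge\sigma_j\sigma_i/2$, which delivers \eqref{lem002:e1}.

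For (2), I would apply Lemma~\ref{lem1} with the compact positive semi-definite operator $K^*K$, its eigenbasis $(v_i)_{i\in\N}$, eigenvalues $(\sigma_i^2)$, $v=\tilde v_{jm}$, and $\lambda=\sigma_j^2$. Since $\sigma_j^2$ is itself an eigenvalue of $K^*K$, the minimizing index set $I$ equals the multiplicity block $\{\psi_-(j),\dots,\psi_+(j)\}$ and the secondary gap in the lemma coincides with $c_j$ from \eqref{lem2:e1}. Inequality \eqref{lem1:e02} then reduces \eqref{lem002:e2} to establishing the residual bound $\varepsilon:=\|K^*K\tilde v_{jm}-\sigma_j^2\tilde v_{jm}\|_{L^2}\le C_K^3/(\sigma_j m^2)$. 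I would split $\varepsilon$ through the triangle inequality as $\|K^*K\tilde v_{jm}-\tilde\sigma_{jm}^2\tilde v_{jm}\|+|\tilde\sigma_{jm}^2-\sigma_j^2|\|\tilde v_{jm}\|$, where the second summand is handled by Lemma~\ref{lem00}, part~(1) of the present lemma, and the universal bound $\sigma_j\le\|K\|\le C_K$.

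For the first summand I would use the representation $(K^*K\tilde v_{jm})(y')=\frac{1}{\tilde\sigma_{jm}\sqrt m}\sum_l(\tilde w_{jm})_l\int\kappa(x,y')\tilde h(x,\xi_{lm})\,\mathrm dx$, where $\tilde h(x,z):=\int\kappa(x,y)\kappa(z,y)\,\mathrm dy$, apply midpoint quadrature to the outer $x$-integral, and invoke $H\tilde w_{jm}=m\tilde\sigma_{jm}^2\tilde w_{jm}+(H-mA_mA_m^T)\tilde w_{jm}$ to identify the leading term with $\tilde\sigma_{jm}^2\tilde v_{jm}(y')$ exactly. The two remainders are (i) a quadrature error whose integrand has second $x$-derivative uniformly bounded by $4C_K^3$ (the product rule together with $|\tilde h|,|\partial_x\tilde h|,|\partial_x^2\tilde h|\le C_K^2$), and (ii) a propagation of $\|H-mA_mA_m^T\|$ through $K_m^*$, estimated via $\|K_m\|\le C_K$. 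Each contributes an $L^2$-norm of order $C_K^3/(\sigma_j m^2)$. The main technical obstacle will be the simultaneous bookkeeping of these two remainders: both scale like $m^{-2}\tilde\sigma_{jm}^{-1}$, so tight tracking of absolute constants, repeatedly using $\sigma_j\le C_K$ to absorb extra factors, is required to land on the precise form $C_K^6/(c_j^2\sigma_j^2 m^4)$ stated in \eqref{lem002:e2}.
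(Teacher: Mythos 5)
Your proposal is correct and follows essentially the same route as the paper: part \eqref{lem002:e1} by expanding the inner product and applying the midpoint-rule error bound entrywise together with $\tilde{\sigma}_{jm}^2\ge\sigma_j^2/2$ for $j\le J_m$, and part \eqref{lem002:e2} by feeding the approximate-eigenfunction residual of $\tilde{v}_{jm}$ under $K^*K$ into Lemma \ref{lem1}. The only deviation is that you center Lemma \ref{lem1} at the exact eigenvalue $\sigma_j^2$ (so the gap is the full $c_j$ but the residual picks up the extra term $|\tilde{\sigma}_{jm}^2-\sigma_j^2|\,\|\tilde{v}_{jm}\|\le \sqrt{2}C_K^2/(3m^2)\le \sqrt{2}C_K^3/(3\sigma_j m^2)$), whereas the paper centers at $\tilde{\sigma}_{jm}^2$ (gap $c_j/2$, residual $C_K^3/(2\sigma_j m^2)$); since $1/2+\sqrt{2}/3<1$, both bookkeepings land on the same final constant $C_K^6/(c_j^2\sigma_j^2m^4)$.
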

We are now ready to give a bound on the total error. Compared to the specific setting considered in Section \ref{s3:s1} the estimates become more delicate. This is due to the following reasons: First, we allow multiple eigenvalues, second, the approximated singular functions \eqref{sing} are only approximately orthonormal, and third, we can only control the approximate singular functions corresponding to low frequency eigenvalues (relative to the noise level $\delta^2$). Thus we decompose the error slightly differently:	\begin{align}\notag
		\tilde{f}^\delta_{k,m}-f
		: =&\sum_{j=1}^k\frac{(g^\delta_m,\tilde{w}_{jm})_{\R^m}}{\sqrt{m}\tilde{\sigma}_{jm}}\tilde{v}_{jm} - f= \sum_{j=1}^k\frac{(g^\delta_m-g_m,\tilde{w}_{jm})_{\R^m}}{\sqrt{m}\tilde{\sigma}_{jm}}\tilde{v}_{jm}+  \sum_{j=1}^k(f,\tilde{v}_{jm}-P\tilde{v}_{jm})\tilde{v}_{jm}\\\label{s4e0}   &\qquad+  \sum_{j=1}^k (f,P\tilde{v}_{jm}) (\tilde{v}_{jm}-P\tilde{v}_{jm}) +\sum_{j=1}^k(f,P\tilde{v}_{jm})P\tilde{v}_{jm}- f.
	\end{align}
 We bound the different terms in the following theorem.
\begin{theorem}\label{t4}
	{  Under Assumption \ref{a1}, 	for all $k\le J_m$ it holds that 	
 \begin{align*}
		&\sqrt{\E\|\tilde{f}_{k,m}^\delta-f\|^2}\le \frac{2\delta}{\sqrt{m}}\sqrt{\sum_{j=1}^k\frac{1}{\sigma_j^2}} +\sqrt{\sum_{j=\psi_+(k)+1}^\infty (f,v_j)^2}+ 2 M_k \sqrt{\sum_{j=\psi_-(k)}^{\psi_+(k)}(f,v_j)^2}\\
  &\qquad+ \frac{(1+\sqrt{{2}M_k})C_K^{{3}}\|f\|}{m^2}\sqrt{\sum_{j=1}^k\frac{1}{c_j^2 \sigma_j^2}} + \frac{\sqrt{2} C_K^{{ 4}} \|f\|}{\sqrt{3}m^3}\sum_{j=1}^k\frac{1}{c_j\sigma_j^2}+ { M_k}\max_{i\le \psi_+(k)}\frac{20 M_{i} C_K^{{3}}\|f\|}{c_{i}\sigma_{i} m^2}.
	\end{align*}
} 
\end{theorem}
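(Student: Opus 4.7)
The strategy is to take $(\E\|\cdot\|^2)^{1/2}$ term-by-term in the error decomposition~\eqref{s4e0} and combine via the triangle inequality. The first summand is the stochastic data-propagation contribution; the two middle summands measure the defect of $\tilde v_{jm}$ from being an exact eigenfunction of $K^*K$, captured by the residual $\tilde v_{jm}-P\tilde v_{jm}$; and the fourth is a projection error on the low-frequency singular subspaces of $K$. The assumption $k\le J_m$ is exactly what allows Lemma~\ref{lem002} to provide perturbation bounds of size at most $O(m^{-2})$ for every index that appears, and it yields in particular $\tilde\sigma_{jm}^2\ge\sigma_j^2/2$ (derived right before Lemma~\ref{lem002}).

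For the data-propagation term I would expand the squared norm as a double sum, use $\E(g_m^\delta-g_m,\tilde w_{jm})(g_m^\delta-g_m,\tilde w_{im})=\delta^2\delta_{ij}$ from the orthonormality of $\tilde w_{jm}$ and the i.i.d.\ unit-variance noise, then apply $\|\tilde v_{jm}\|^2\le 2$ from~\eqref{lem002:e1} and $\tilde\sigma_{jm}^{-2}\le 2\sigma_j^{-2}$; the overall factor $2$ in the first bound collects these. The two middle summands rest on the identity $\|\tilde v_{jm}-P\tilde v_{jm}\|^2=\|\tilde v_{jm}\|^2-(\tilde v_{jm},P\tilde v_{jm})$, which together with~\eqref{lem002:e2} yields $\|\tilde v_{jm}-P\tilde v_{jm}\|\le C_K^{3}/(c_j\sigma_j m^2)$. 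In the second summand, Cauchy--Schwarz on each coefficient plus the near-orthonormality $|(\tilde v_{im},\tilde v_{jm})-\delta_{ij}|\le C_K^2/(3\sigma_i\sigma_jm^2)$ produce the leading constant $1$ of the $C_K^3\|f\|/m^2$-term. The third summand is similar, but one bounds $|(f,P\tilde v_{jm})|\le \|f\|$ and then uses that at most $M_k$ different $P\tilde v_{jm}$ can live in the same eigenspace of $K^*K$; Cauchy--Schwarz on each such block yields the extra $\sqrt{2M_k}$ factor inside the coefficient $(1+\sqrt{2M_k})$.

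The main obstacle is the projection term $\sum_{j=1}^k(f,P\tilde v_{jm})P\tilde v_{jm}-f$. The plan is to split $f$ along its exact singular expansion into three pieces: the high-frequency tail $\sum_{j>\psi_+(k)}(f,v_j)v_j$ (which directly supplies the second bound term), the possibly incomplete boundary eigenspace $\psi_-(k)\le j\le\psi_+(k)$ (bounded crudely using its dimension $M_k$, yielding the third term $2M_k\sqrt{\sum_{j=\psi_-(k)}^{\psi_+(k)}(f,v_j)^2}$), and the fully covered eigenspaces. On each of the latter, the vectors $\{P\tilde v_{jm}\}$ form an approximate orthonormal basis, so their Gram matrix $B$ satisfies $\|B-I\|\lesssim M_i\cdot C_K^3/(c_i\sigma_i m^2)$ by~\eqref{lem002:e1}, and hence $\sum_j(f,P\tilde v_{jm})P\tilde v_{jm}$ reproduces $P_{V_i}f$ up to an error of this size times $\|f\|$; this is precisely the source of the final $M_k\max_i(\cdot)$ summand. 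The $m^{-3}$ contribution then arises from the cross products between the two perturbations (the Gram-matrix correction and the residual $\tilde v_{jm}-P\tilde v_{jm}$) where they meet in the third summand, bounded via Cauchy--Schwarz. The delicate point throughout is tracking which constants must carry a factor $M_k$ coming from multiple-eigenvalue collisions and which do not, since this is what keeps the bound sharp on simple-spectrum kernels; summing the four contributions by the triangle inequality then yields the claim.
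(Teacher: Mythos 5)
Your proposal is correct and follows essentially the same route as the paper: the decomposition \eqref{s4e0}, the bounds $\|\tilde v_{jm}\|^2\le 2$ and $\tilde\sigma_{jm}^2\ge\sigma_j^2/2$ for the variance term, the residual bound $\|\tilde v_{jm}-P\tilde v_{jm}\|\le C_K^3/(c_j\sigma_j m^2)$ from Lemma \ref{lem002}, and the Gram-matrix perturbation argument (via Gerschgorin) on the covered eigenspaces together with a crude $M_k$-bound on the boundary eigenspace. The only slight misattribution is that the $m^{-3}$ term actually arises from the off-diagonal cross terms in the \emph{second} summand $\sum_j(f,\tilde v_{jm}-P\tilde v_{jm})\tilde v_{jm}$ (near-orthogonality defect times residual), not in the third, but this does not affect the argument.
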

	Let us comment on this result: We note that the structure of the error bound in Theorem \ref{t4} is different from that in Theorem \ref{t2}. In the latter case, by introducing a source condition on $f$ (of sufficiently high degree), the special setting allowed us to analyze the term $\sum_{j=k+1}^m(f, v_{jm})v_{jm}$ (since we explicitly derived the singular functions of the semi-discrete operator), as well as the term $\sum_{j=1}^m(f,v_{jm})v_{jm}-f$ (due to special properties of the approximation space spanned by the anchored kernel functions). In the general setting of Theorem \ref{t4} this is not possible, and we base our analysis directly on the approximation of the singular functions of the continuous operator by those of the semi-discrete operator. Since only low-frequency singular vectors can be approximated with sufficient accuracy by the quadrature rule, this gives the constraint $k\le J_m$. Note that the third term can be replaced with zero in case that there is a spectral gap after $\sigma_k$, i.e. for $\psi_+(k)=k$.
 We now have the error for our estimator $\tilde{f}_{k,m}^\delta$ from \eqref{est1} based on the $m$ initial measurements $g^\delta_m$ and the quadrature matrix $A_m\in\R^{m\times m}$. As in the previous section, we identify a variance term, two approximation terms, and several terms due to discretization. Compared to Theorem~\ref{t2}, however, the situation here is more delicate, since the discretization error now depends on the truncation level $k$. This is due to the fact that now we have taken the error from the quadrature rule into account. Nevertheless, we may still encounter the situation where the initial discretization was too fine, in the sense that if we consider the $k$ such that the (square root of the) variance $\frac{2\delta}{\sqrt{m}}\sqrt{\sum_{j=1}^k\frac{1}{\sigma_j^2}}$ is balanced with the approximation error 
$$
  \sqrt{\sum_{j=\psi_+(k)+1}^\infty(f, v_{j})^2}+2 M_k \sqrt{\sum_{j=\psi_-(k)}^{\psi_+(k)}(f,v_j)^2},$$
  this $k$ can lead to a discretization error $$\frac{(1+\sqrt{{2}M_k})C_K^{{3}}\|f\|}{m^2}\sqrt{\sum_{j=1}^k\frac{1}{c_j^2 \sigma_j^2}} + \frac{\sqrt{2} C_K^{{ 4}} \|f\|}{\sqrt{3}m^3}\sum_{j=1}^k\frac{1}{c_j\sigma_j^2}+ { M_k}\max_{i\le \psi_+(k)}\frac{20 M_{i} C_K^{{3}}\|f\|}{c_{i}\sigma_{i} m^2}$$ of smaller order\footnote{Note here that the restriction $k \le J_{m}$ might hinder the optimal (balancing) choice of $k$, in particular if $M_k$ grows strongly.}. Therefore, we again average the $m$ initial measurements to obtain $m_o$ new measurements, and base the estimator on the singular value decomposition $( \tilde{z}_{jm_o},\tilde{w}_{jm_o}{,\tilde{\sigma}_{jm_o}})$ of $A_{m_o}\in\R^{m_o\times m_o}$ and the $\tilde{v}_{jm_o}$ from \eqref{sing}. So we consider the estimator
\begin{align}\label{t6e0}	\overline{\tilde{f}}^\delta_{k,m_o}:=\sum_{l=1}^{m_o} \overline{\tilde{\alpha}}_l \kappa(\xi_{lm_o},\cdot)\qquad\mbox{where}\qquad\overline{\tilde{\alpha}}_l:={ \overline{\tilde{\alpha}}_l(k,m_o,\delta)}= \sum_{j=1}^k \frac{(\overline{g}^\delta_{m_o},\tilde{w}_{jm_o})_{\mathbb{R}^{m_o}}}{m_o\tilde{\sigma}_{jm_o}^2} (\tilde{w}_{jm_o})_l,
\end{align}
with averaged data
\begin{align*}
	\overline{g}^\delta_{m_o}:=\left(\frac{1}{o}\sum_{i=1}^o \left(g^\delta_m\right)_{o(j-1)+i}\right)_{j=1}^{m_o}\in\R^{m_o}.
\end{align*}
As will be seen in the proofs, the main difference in the error analysis compared to the unaveraged estimator $\tilde{f}_{k,m}$ is the systematic data error introduced by averaging. In the special case with kernel \eqref{s1:e1a} treated in Section \ref{s3:s1}, this error was roughly $\|g'\|/m$. Now we will obtain a faster decay of this systematic component due to the additional smoothness assumptions for the kernel $\kappa$ and the special geometry of the discretization grid. We have the following result:
	{ 
\begin{theorem}\label{t6}
 For  $o\in\N$ and $m_o=\frac{m}{o}\in\N$, under Assumption \ref{a1}, for all $k\le J_{m_{ o}}$ it holds that
		\begin{align*}
		\sqrt{\E\left\| \overline{\tilde{f}}^\delta_{k,m_o}-f\right\|^2} &\le\frac{2\delta}{\sqrt{m}}\sqrt{\sum_{j=1}^k\frac{1}{\sigma_j^2}} + \frac{\|g''\|_\infty}{12m_o^2 \sigma_k}  + \frac{C_K\|g''\|_\infty}{12\sqrt{6}m_o^3}\sqrt{\sum_{j=1}^k\frac{1}{\sigma_j^4}} + \sqrt{\sum_{j=\psi_+(k)+1}^\infty(f,v_j)^2}\\
    &\quad+  2M_k \sqrt{\sum_{j=\psi_-(k)}^{\psi_+(k)} (f,v_j)^2} +\frac{(1+\sqrt{{2}M_k})C_K^{3}\|f\|}{m_o^2}\sqrt{\sum_{j=1}^k\frac{1}{c_j^2 \sigma_j^2}} + \frac{\sqrt{2} C_K^{4} \|f\|}{\sqrt{3}m_o^3}\sum_{j=1}^k\frac{1}{c_j\sigma_j^2}\\
    &\quad+ { M_k}\max_{i\le \psi_+(k)}\frac{20 M_{i} C_K^{3}\|f\|}{c_{i}\sigma_{i} m_o^2}.	
	\end{align*}
	\end{theorem}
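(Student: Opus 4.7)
The plan is to parallel the proof of Theorem~\ref{t4} while carefully tracking the additional deterministic bias produced by averaging. Starting from the representation
\[
\overline{\tilde f}^\delta_{k,m_o}=\sum_{j=1}^k\frac{(\overline g^\delta_{m_o},\tilde w_{jm_o})_{\R^{m_o}}}{\sqrt{m_o}\,\tilde\sigma_{jm_o}}\,\tilde v_{jm_o},
\]
I would split $\overline g^\delta_{m_o}=g_{m_o}+(\overline g_{m_o}-g_{m_o})+\delta\overline Z$, where $g_{m_o}:=(g(\xi_{im_o}))_i$ and $\overline Z_i:=\tfrac{1}{o}\sum_{j=1}^o Z_{o(i-1)+j}$, then subtract $f$ and apply the triangle inequality. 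This leaves three pieces: a stochastic noise term driven by $\overline Z$, a deterministic averaging-bias term driven by $\overline g_{m_o}-g_{m_o}$, and the noise-free residual $\sum_j\tfrac{(g_{m_o},\tilde w_{jm_o})_{\R^{m_o}}}{\sqrt{m_o}\,\tilde\sigma_{jm_o}}\tilde v_{jm_o}-f$. Using the identity $(g_{m_o},\tilde w_{jm_o})_{\R^{m_o}}=\sqrt{m_o}\,\tilde\sigma_{jm_o}(f,\tilde v_{jm_o})_{L^2}$, which follows from $g=Kf$ and the definition of $\tilde v_{jm_o}$ in \eqref{sing}, this third piece coincides with the bias portion of the decomposition \eqref{s4e0} applied to the unaveraged estimator at grid size $m_o$. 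Hence Theorem~\ref{t4} with $\delta=0$ and $m$ replaced by $m_o$ delivers the four terms involving $\sqrt{\sum(f,v_j)^2}$, $M_k$, $C_K^3$ and $C_K^4$ verbatim.

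For the stochastic noise term the argument proceeds as in Theorem~\ref{t4}, with the crucial difference that $\E[\overline Z_i^2]=1/o$. Consequently $\E|(\overline Z,\tilde w_{jm_o})|^2=1/o$, and combining Lemma~\ref{lem00} (which gives $\tilde\sigma_{jm_o}^2\ge\sigma_j^2/2$ for $j\le J_{m_o}$) with Lemma~\ref{lem002} (near-orthonormality of the $\tilde v_{jm_o}$) one arrives at
\[
\E\left\|\delta\sum_{j=1}^k\frac{(\overline Z,\tilde w_{jm_o})}{\sqrt{m_o}\,\tilde\sigma_{jm_o}}\tilde v_{jm_o}\right\|^2\le \frac{4\delta^2}{o\,m_o}\sum_{j=1}^k\frac{1}{\sigma_j^2},
\]
and the identity $o\,m_o=m$ then yields the announced contribution $\tfrac{2\delta}{\sqrt m}\sqrt{\sum 1/\sigma_j^2}$, i.e.\ precisely the variance reduction granted by averaging.

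The averaging bias is the genuinely new ingredient. A pointwise estimate for $e_i:=(\overline g_{m_o})_i-g(\xi_{im_o})$ follows by equating two midpoint approximations of $\int_{(i-1)/m_o}^{i/m_o}g(y)dy$ --- a single-node rule at $\xi_{im_o}$ and the $o$-point composite rule on the fine grid --- which produces
\[
e_i=\frac{g''(\eta_i^\ast)}{24m_o^2}-m_oR_i,\qquad |R_i|\le\frac{\|g''\|_\infty}{24m_om^2}.
\]
The crude estimate $\|e\|_{\R^{m_o}}\le\|g''\|_\infty/(12m_o^{3/2})$, combined with the operator-norm bound on the action of spectral truncation and Lemma~\ref{lem00}, gives the first contribution $\|g''\|_\infty/(12m_o^2\sigma_k)$. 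The sharper second term $\tfrac{C_K\|g''\|_\infty}{12\sqrt 6\,m_o^3}\sqrt{\sum 1/\sigma_j^4}$ requires a refined decomposition of $e$: one separates the leading piece $g''(\xi_{im_o})/(24m_o^2)$ from the remainder and exploits that $g=Kf$ together with $\|\partial_x^2\kappa\|_\infty\le C_K$ to control the projection of this smooth leading piece onto the low-frequency $\tilde w_{jm_o}$, thereby picking up an extra factor of $1/m_o$ together with the kernel constant $C_K$.

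The main obstacle is precisely this last step: extracting the sharper bound requires isolating the smooth $g''$-component of the averaging residual and then bounding its projection onto the approximate singular vectors while keeping track of $C_K$. Everything else is bookkeeping --- the passage from the approximate spectral data $(\tilde\sigma_{jm_o},\tilde w_{jm_o},\tilde v_{jm_o})$ of $A_{m_o}$ to the continuous spectral data $(\sigma_j,v_j)$ of $K$ proceeds exactly as in the proof of Theorem~\ref{t4}, via Lemmas~\ref{lem00}, \ref{lem1} and \ref{lem002}.
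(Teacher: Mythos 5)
Your overall architecture matches the paper's: the same three-way split of $\overline g^\delta_{m_o}$ into exact data, averaging bias and averaged noise, the variance reduction $\E[\overline Z_i^2]=1/o$ combined with $o\,m_o=m$, the reduction of the noise-free residual to the bias analysis of Theorem~\ref{t4} at grid size $m_o$ (whence the constraint $k\le J_{m_o}$), and a per-component $O(\|g''\|_\infty/m_o^2)$ bound on the averaging error $e_i$. Your quadrature-error derivation of $|e_i|\le \|g''\|_\infty/(12m_o^2)$ is valid but loses a factor $2$ against the paper, which Taylor-expands around $\xi_{im_o}$ and uses that the fine-grid points in each coarse cell average exactly to the coarse midpoint, so the $g'$ term cancels and $|e_i|\le \|g''\|_\infty/(24 m_o^2)$; with your constant the first bias term would come out as $\|g''\|_\infty/(6m_o^2\sigma_k)$ rather than the stated $\|g''\|_\infty/(12 m_o^2\sigma_k)$.

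The genuine gap is in the term $\frac{C_K\|g''\|_\infty}{12\sqrt 6\, m_o^3}\sqrt{\smash[b]{\sum_{j\le k}\sigma_j^{-4}}}$, which you flag as ``the main obstacle'' and propose to obtain by further decomposing $e$ into a smooth leading $g''$-piece and controlling its projection onto the $\tilde w_{jm_o}$ using $g=Kf$ and $\|\partial_x^2\kappa\|_\infty$. That is not how this term arises, and no refinement of $e$ is needed. The paper handles the entire bias contribution with the single global bound $\|\overline g_{m_o}-g_{m_o}\|^2\le \|g''\|_\infty^2/(9\cdot 64\, m_o^3)$; the two bias terms are simply the diagonal and off-diagonal parts of the expansion of the squared norm of $\sum_{j}(\overline g^\delta_{m_o}-g_{m_o},\tilde w_{jm_o})\tilde v_{jm_o}/(\sqrt{m_o}\,\tilde\sigma_{jm_o})$. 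The diagonal part, bounded via $\sum_j(e,\tilde w_{jm_o})^2/\sigma_j^2\le \sigma_k^{-2}\|e\|^2$ together with the factor $4$ from $\tilde\sigma_{jm_o}^2\ge\sigma_j^2/2$ and $\|\tilde v_{jm_o}\|^2\le 2$, yields $\|g''\|_\infty/(12 m_o^2\sigma_k)$; the off-diagonal part is controlled by the near-orthogonality estimate $|(\tilde v_{jm_o},\tilde v_{im_o})|\le C_K^2/(3\sigma_j\sigma_i m_o^2)$ of Lemma~\ref{lem002} followed by Cauchy--Schwarz, giving $\frac{2C_K^2}{3m_o^3}\bigl(\sum_j\sigma_j^{-4}\bigr)\|e\|^2$ and hence exactly the $C_K\|g''\|_\infty/(12\sqrt 6\, m_o^3)$ term. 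The factor $C_K$ there comes from the inner products of the approximate singular functions, not from any property of $g''$ or of the kernel acting on the averaging residual. Without this observation your proposal does not produce the third term of the statement.
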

As above, the fifth term can be omitted in case that $\psi_+(k)=k$.}  
We compare our upper bound in Theorem \ref{t6} for the averaged estimator with the upper bound in Theorem \ref{t4} for the non-averaged one. First, we observe that for $m_o,k\to \infty$ there holds 
\begin{equation*}
	\frac{\|g''\|_\infty}{12m_o^2 \sigma_k}  + \frac{C_K\|g''\|_\infty}{12\sqrt{6}m_o^3}\sqrt{\sum_{j=1}^k\frac{1}{\sigma_j^4}} \ll \frac{1}{m_o^2}\sqrt{\sum_{j=1}^k\frac{1}{c_j^2 \sigma_j^2}}.
\end{equation*}
Consequently, if the initial discretization was too fine, i.e, 
\begin{align*}
	&\frac{2\delta}{\sqrt{m}}\sqrt{\sum_{j=1}^k\frac{1}{\sigma_j^2}} + {  \sqrt{\sum_{j=\psi_+(k)+1}^\infty(f,v_j)^2}+  2M_k \sqrt{\sum_{j=\psi_-(k)}^{\psi_+(k)} (f,v_j)^2}}\\
 &{ \qquad\gg  \frac{(1+\sqrt{2M_k})C_K^3\|f\|}{m^2}\sqrt{\sum_{j=1}^k\frac{1}{c_j^2 \sigma_j^2}} + \frac{\sqrt{2} C_K^4 \|f\|}{\sqrt{3}m^3}\sum_{j=1}^k\frac{1}{c_j\sigma_j^2}+ M_k\max_{i\le {\psi_+(k)}}\frac{20 M_{i} C_K^3\|f\|}{c_{i}\sigma_{i} m^2}},   	
\end{align*}
and if $m_o$ is not too small, we see that the upper bounds in Theorem \ref{t4} and Theorem \ref{t6} are asymptotically the same. In principle, it should even be possible to obtain a stronger result here, comparable to Theorem \ref{t2}. However, we leave this as future work, since clearly some additional restrictions are needed to obtain an exact lower bound. Looking again at the example where $\tilde{\sigma}_{jm}^2 {=} j^{-q}$, we see that approximating $\tilde{f}^\delta_{k, m}$ to a relative error of $\varepsilon>0$ is more expensive by a factor between $o^{1+\frac{2}{q-1}}$ and $o^{2+\frac{1}{q-1}}$ than approximating $\overline{\tilde{f}}^\delta_{k,m_o}$ to the same relative accuracy.
\subsection{Adaptivity}\label{adap}
Finally, we consider the question of adaptivity, which concerns the concrete choice of the truncation level $k$ and the discretization dimension $m_o$. Since properties of the unknown solution $f$, such as the particular smoothness $\varphi_s$ or the norm $\rho$ of the source element, are usually unknown, strategies are needed that depend only on the measurements $g^\delta_m$ and possibly on the noise level $\delta>0$. Obviously, one is interested in adaptively finding $k$ and $m_o$ such that the error $\|\overline{f}^\delta_{k, m_o}-f\|$ reaches the infeasible optimal choice $\arg\min_{k}\|f^\delta_{k, m}-f\|$ by at least a multiplicative factor, analogously for $\overline{\tilde{f}}^\delta_{k,m_o}$ and $\tilde{f}^\delta_{k,m}$. A popular method to determine a regularization parameter in a data-driven manner is the discrepancy principle. This principle follows the paradigm that the regularization parameter should be chosen such that the residual norm of the candidate approximation is approximately of the same size as the data error. In contrast to classical settings where the discretization dimension is fixed and usually only one regularization parameter has to be chosen, here we need a strategy to determine both the truncation level $k$ and the dimension $m_o$. In the following, we propose a modified discrepancy principle, which can be seen as a multiscale method. For simplicity, we restrict ourselves to the case where $m=a^n$ for some $a,n\in\N$. The expected squared data error is
\begin{align}\notag
	&\E\left[\|\overline{g}^\delta_{m_o}-g_{m_o}\|_{\R^{m_o}}^2\right]= \E\left[\left\| \overline{g}^\delta_{m_o} - \overline{g}_{m_o}\right\|_{\R^{m_o}}^2\right] + \left\|\overline{g}_{m_o}-g_{m_o}\right\|_{\R^{m_o}}^2\\\label{adap:e1}
 &~= \delta^2\E\left\|\begin{pmatrix} \bar{Z}_1 \\ \vdots\\ \bar{Z}_{m_o} \end{pmatrix}\right\|_{\R^{m_o}}^2 +\left\|\overline{g}_{m_o} - g_{m_o}\right\|^2_{\R^{m_o}}
	= \frac{m_o}{o}\delta^2 +\left\|\begin{pmatrix} \frac{\sum_{t=1}^{o} g\left(\xi_{tm}\right)}{o}-g(\xi_{1m_o}) \\ \vdots \\ \frac{\sum_{t=1}^{o}g(\xi_{{((m_o-1)o+t)}m})}{o} - g(\xi_{m_om_o}) \end{pmatrix}\right\|^2_{\R^{m_o}},
\end{align}
and the residual norm is $\|K_{m_{ o}}\overline{f}_{k,m_o} - \overline{g}^\delta_{m_o}\|_{\R^{m_o}}$. First, note that for $\kappa(x,y)=\min(x(1-y),y(1-x))$ and a uniform grid, we can compute the residual norm exactly as
\begin{equation*}
	\|K_{m_o} \overline{f}_{k,m_o}^\delta - \overline{g}_{m_o}^\delta\|_{\R^{{m_o}}}^2 = \sum_{j=k+1}^{m_o}(\overline{g}^\delta_{m_o},u_{jm_o})_{\R^{m_o}}^2.
\end{equation*}
It is monotonically decreasing in $k$ and is zero for $k=m_o$. The systematic error (the second term in \eqref{adap:e1}) can be estimated as in \eqref{s3e01} below, and we get 
\begin{align*}
	\|\overline{g}_{m_o}-g_{m_o}\|_{\R^{m_o}}^2\le \frac{o}{{m+1}}\|g'\|^2.
\end{align*}
Consequently, for $\kappa(x,y)=\min(x(1-y),y(1-x))$ discretized on a uniform grid, and fixed dimension $m_o$ and  parameter $\tau>1$, we define the truncation level determined by the discrepancy principle as 
\begin{equation}
	k_{\rm dp}^\delta(m_o):=\min\left\{k\ge 0 ~:~\|K_{m_o}\overline{f}_{k,m_o}^\delta - \overline{g}^\delta_{m_o}\|_{\R^{m_o}}\le\tau \sqrt{\frac{\|g'\|^2}{m_o}+\frac{m_o}{o}\delta^2}\right\}.
\end{equation}
This choice is intended to balance the residual norm and the (expected) data error. In the case of a general kernel $\kappa$ (fulfilling Assumption \ref{a1}) and the discretization by midpoint collocation we assume that $\dim\mathcal{R}\left(A_{m_o}\right) = m_o$, which implies that $(\tilde{w}_{jm_o})_j$ forms an orthonormal basis of $\R^{m_o}$. Then we have
\begin{align*}
	\left(K_{m_o} \overline{\tilde{f}}_{k,m_o}^\delta\right)_i &=\sum_{j=1}^k\frac{(\overline{g}^\delta_{m_o},\tilde{w}_{jm_o})_{\R^{m_o}}}{\sqrt{m_o}\tilde{\sigma}_{jm_o}} \left(K_{m_o} \tilde{v}_{jm_o}\right)_i\\
	&=\sum_{j=1}^k \frac{(\overline{g}^\delta_{m_o},\tilde{w}_{jm_o})_{\R^{m_o}}}{\sqrt{m_o}\tilde{\sigma}_{jm_o}}  \frac{1}{\sqrt{m_o}\tilde{\sigma}_{jm_o}}\sum_{t=1}^{m_o}(\tilde{w}_{jm_o})_t \int \kappa(\xi_{tm_o},y)\kappa(\xi_{im_o},y){\rm d}y 
\end{align*}
and if we also estimate the integral by the midpoint collocation quadrature, we get
\begin{align*}
	&\sum_{j=1}^k \frac{(\overline{g}^\delta_{m_o},\tilde{w}_{jm_o})_{\R^{m_o}}}{\sqrt{m_o}\tilde{\sigma}_{jm_o}}  \frac{1}{\sqrt{m_o}\tilde{\sigma}_{jm_o}}\sum_{t=1}^{m_o}(\tilde{w}_{jm_o})_t \frac{1}{m_o}\sum_{s=1}^{m_o} \kappa(\xi_{tm_o},\xi_{sm_o})\kappa(\xi_{im_o},\xi_{sm_o})\\
	&~~=~\sum_{j=1}^k \frac{(\overline{g}^\delta_{m_o},\tilde{w}_{jm_o})_{\R^{m_o}}}{\tilde{\sigma}_{jm_o}}  \frac{1}{m_o}\sum_{s=1}^{m_o}(\tilde{z}_{jm_o})_s \kappa(\xi_{im_o},\xi_{sm_o})=\sum_{j=1}^k (\overline{g}^\delta_{m_o},\tilde{w}_{jm_o})_{\R^{m_o}}  (\tilde{w}_{jm_o})_i,
\end{align*}
with quadrature error
\begin{align*}
	&\left|(K_{m_o} \overline{\tilde{f}}^\delta_{k,m_o})_i - \sum_{j=1}^k (\overline{g}^\delta_{m_o},\tilde{w}_{jm_o})_{\R^{m_o}}  (\tilde{w}_{jm_o})_i,\right|\\
	&~\le\left|\sum_{j=1}^k \frac{(\overline{g}^\delta_{m_o},\tilde{w}_{jm_o})_{\R^{m_o}}}{m_o\tilde{\sigma}_{jm_o}^2}  \sum_{t=1}^{m_o}(\tilde{w}_{jm_o})_t \left[\int \kappa(\xi_{tm_o},y)\kappa(\xi_{im_o},y){\rm d}y - \frac{1}{m_o}\sum_{s=1}^{m_o} \kappa(\xi_{tm_o},\xi_{sm_o})\kappa(\xi_{im_o},\xi_{sm_o})\right]\right|\\
	&~\le~\frac{C_K^2}{6m_o^2}\left|\sum_{j=1}^k\frac{(\overline{g}^\delta_{m_o},\tilde{w}_{jm_o})_{\R^{m_o}}}{m_o\tilde{\sigma}_{jm_o}^2} \sum_{t=1}^{m_o}(\tilde{w}_{jm_o})_t\right|\le \frac{C_K^2}{6 m_o^3 \tilde{\sigma}_{km_o}^2} \left|\sum_{j=1}^k(\overline{g}_{m_o}^{ \delta},\tilde{w}_{jm_o})_{\R^{m_o}}\right| \left|\sum_{t=1}^{m_o}(\tilde{w}_{jm_o})_t\right|\\
	&~\le~{\frac{C_K^2 \sqrt{k}\|\overline{g}_{m_o}^\delta\|_{\R^{m_o}}}{3 m_o^\frac{5}{2} \sigma_{k}^2}.}
\end{align*}
Now, since $(\tilde{w}_{jm_o})_{j=1}^{m_o}$ is an orthonormal basis in $\R^{m_o}$, we can estimate the residual norm as
\begin{align}\notag
	&\left\| K_{m_o} \overline{\tilde{f}}^\delta_{k,m_o} - \overline{g}_{m_o}^\delta\right\|_{\R^{m_o}}\\\notag
	&~~{\le} \left\| \sum_{j=1}^k(\overline{g}^\delta_{m_o},\tilde{w}_{jm_o})_{\R^{m_o}} \tilde{w}_{jm_o} - \overline{g}_{m_o}^{ \delta}\right\|_{\R^{m_o}} + \sqrt{\sum_{i=1}^{m_o}\left((K_{m_o} \overline{f}^\delta_{k,m_o})_i - \sum_{j=1}^k (\overline{g}^\delta_{m_o},\tilde{w}_{jm_o})_{\R^{m_o}}  (\tilde{w}_{jm_o})_i\right)^2}\\\label{eq:resi}
	&~~\le \sqrt{\sum_{j=k+1}^{m_o}(\overline{g}^\delta_{m_o},\tilde{w}_{jm_o})_{\R^{m_o}}^2} + {\frac{C_K^2\sqrt{k}\|\overline{g}^\delta_{m_o}\|_{\R^{m_o}}}{3\sigma_k^2m_o^2}.}
\end{align}
Finally, we define the discrepancy principle using
\begin{equation}\label{dp}
	k_{\rm dp}^\delta(m_o):=\min\left\{ k \ge 0~:~ \sqrt{\sum_{j=k+1}^{m_o}(\overline{g}^\delta_{m_o},\tilde{w}_{jm_o})_{\R^{m_o}}^2} \le\tau \sqrt{{\rm err}_{\rm sys}^2(m_o) + \frac{m_o}{o}\delta^2}\right\},
\end{equation}
where we estimate the systematic error either as in Section \ref{s3:s1} for the special kernel \eqref{s1:e1a} by
\begin{equation*}
	{\rm err}_{\rm sys}^2(m_o):=\frac{\|g'\|^2}{m_o},	
\end{equation*}
or as in Section \ref{s5} for the general kernel and the discretization grid $\xi_{jm}:= \frac{2j-1}{2m}, j=1,...,m,$ by
\begin{equation*}
	{\rm err}_{\rm sys}^2(m_o):=\frac{\|g''\|_\infty^2}{9\cdot 64 m_o^3},
\end{equation*}
which is usually a much better bound, see \eqref{t6e1} below. Note that the discrepancy principle is well-defined, even though, by \eqref{eq:resi}, we approximate the residual norm accurately only for $k$ not too large. In any case, we emphasize that knowledge of $\delta$ and either $\|g'\|$ or $\|g''\|_\infty$ is required here. Usually these two quantities can be estimated from the initial data $g^\delta_m$. Note that the determination of $\|g'\|$ or $\|g''\|_\infty$ is itself an inverse problem, but a rather mild one. Obviously, it would be of interest to derive a method that does not require this additional knowledge. We will comment on this issue in Section \ref{s7}. Our adaptive method iteratively applies the discrepancy principle on a scale of different discretization levels $m_{{o}_0}\le m_{{o}_1} \le ... \le m$, starting with a very coarse discretization, i.e. some $m_{{o}_0} =a^{n_0}\le m$. For each discretization level $m_o$ the classical discrepancy principle \eqref{dp} is applied.  The method stops when the truncation index determined by the discrepancy principle decreases for the first time. See Algorithm 1 below for the numerical implementation. 

In the following section, we will apply the discrepancy principle to some examples and compare its performance to that of the infeasible optimal choice $k_{\rm opt}^\delta(m)=\arg\min_{k\le m}\|f^\delta_{k,m}-f\|$. We indeed observe that the discretization dimension where the discrepancy principle is maximal yields approximately the optimal error.
\section{Numerical Experiments}\label{s6}
In this section we present the results of our numerical experiments.  The initial number of point evaluations is $m=4^{6}=4096$ and for the reduced data we use $m_o\in\{4^{6},...,4^2\}$. We perturb the evaluations with i.i.d. Gaussian noise, but note that the results hold for any other centered i.i.d. white noise with finite variance.  We choose the noise level $\delta$ implicitly via the signal-to-noise ratio (SNR), which is defined as 
\begin{equation*}
	{\rm SNR}:=\frac{\|g\|}{\sqrt{\E\left[\|g^\delta_m-g\|^2\right]}} = \frac{\|g\|}{\sqrt{m}\delta}.
\end{equation*}
We will calculate the following quantities:
\begin{itemize}
	\item The optimal truncation level $k_{\rm opt}:=\arg\min_{k\le m_{ o}}\|\overline{f}^\delta_{k,m_o} - f\|$ with the corresponding optimal (relative) error $e_{\rm opt}:=\min_{k\le m_{o}}\|\overline{f}^\delta_{k,m_o}-f\|/\|f\|$ (Section \ref{s3:s1} and \ref{s5}).
	\item The data-driven truncation level $k_{\rm dp}$ determined by Algorithm 1 with the corresponding (relative) error $e_{\rm dp}:=\|\overline{f}^\delta_{k_{\rm dp},m_o}-f\|/\|f\|$, with parameter $\tau=1.5$ in \eqref{dp} (Section \ref{adap}),
\end{itemize}
and similarly for  $\overline{\tilde{f}}_{k,m_o}^\delta$. Here we take the average of $50$ independent runs. We start with the integral equation \eqref{s1:e1} with kernel $\kappa(x,y)=\min(x(1-y),y(1-x))$ from Section \ref{s3:s1}, for which we know the exact singular value decomposition for both the continuous and the discretized case. In the following we will denote it by {\tt deriv2} (because of its relation to the Poisson equation). We define the exact unknown solution over the singular vectors and set $f_i:=\sum_{j=1}^D \varphi_{s_i}(\sigma_j^2) v_j$ with  $D=5000$ and smoothness parameter $s_{i}=\frac{1+2i}{8}$, $i=0,1,2$, and $\varphi_s(t)=t^\frac{s}{2}$. The noise is varied by the signal-to-noise ratio ${\rm SNR}\in\{8^3,8^2,8,1\}$. Note that here all errors are computed exactly, for the formulas we refer to \cite{jahn2022cross}. In view of Theorem \ref{t2} it would be interesting to determine the optimal discretization level { $m_{o_m}$ from \eqref{t2:def}}, since for this discretization level we expect roughly the same optimal error as for the initial dimension $m$. However, even with simulated data, $m_{o_{{m}}}$ is difficult to obtain because the source element $h$ (and hence its norm $\rho$) for $f$ is not unique. As an alternative, we compute here another reasonable a priori choice for the optimal discretization dimension, which arises naturally in particular when considering the discrepancy principle. We define it implicitly by balancing  the two contributions of the data error, i.e., the variance due to random noise on the measurements and the bias due to averaging, in
\begin{equation*} 
	\E\|\overline{g}^\delta_{m_o}-g_{m_o}\|^2 \le \frac{\|g'\|^2}{m_o}+\frac{m_o}{o}\delta^2.
\end{equation*} 
This way we get 
\begin{equation}\label{apriorimo}
	\frac{\|g'\|^2}{m_o}\stackrel{!}{=}\frac{m_o}{o}\delta^2\quad \Longrightarrow\quad o=\left(\frac{m^2\delta^2}{\|g'\|^2}\right)^\frac{1}{3},
\end{equation}
    and we set $m_o^{\rm apriori}$ as the element in $\{4^6,...,4^2\}$ which is closest to $m/o$, where $o$  is determined through \eqref{apriorimo}. In the following tables, we print the corresponding column number belonging to $m^{\rm apriori}_o$ in bold.
The results are shown in Tables 1 and 2 below. Each table consists of four blocks corresponding to the four different SNRs. The five columns are indexed by the decreasing discretization dimension $m_o$. The four rows in each block show the values of the optimal error, the discretization error, and the respective truncation levels. In addition, the column corresponding to the a priori choice $m_o^{\rm apriori}$ mentioned above is shown in bold. 
We observe that for small noise (i.e., large SNR), the optimal error grows as $m_o$ decreases (i.e., as more and more measurements are averaged). However, for larger noise (smaller SNR), the optimal error remains nearly constant for larger $m_o$. Similarly, for larger noise, the optimal truncation index remains almost constant for larger $m_o$. This confirms that for larger noise, the reduced data gives the same accuracy. Moreover, we observe that if the discretization dimension $m_o$ is smaller than $m^{\rm apriori}_o$ (indicated in bold), the discretization error starts to dominate (i.e., the optimal error grows). The error obtained by the { discrepancy} principle is larger, in some cases substantially, than the optimal one. This is to be expected due to the rough upper bound on the data noise. Interestingly, we observe that the a priori determined discretization level is similar to the one where the discrepancy principle gives a maximum truncation level, which is mostly the level determined by Algorithm 1. These results hold regardless of the smoothness of the solution $f$. For general integral equations we use the open source software package ReguTools from \cite{Hansen:2007}. Here we choose the two test problems {\tt gravity} and {\tt heat}. We use the default parameters and {\tt example}$=2$ in {\tt gravity}.  These are discretizations of a gravity survey problem and the inverse heat equation using midpoint quadrature collocation.
For the dimension $m_o$ we get a system of linear equations $A_{m_o}x=b$ with $(A_{m_o})_{ij}=\frac{1}{m_o}\kappa(\xi_{im_o}, \xi_{jm_o}), i,j=1,...,m_o,$ and solution  $(x_{m_o})_i=f(\xi_{im_o}), i=1,...,m_o$. The grid points are $\xi_{im_o}=\frac{{-}1+2i}{2m_o}$, $i=1,. ..,m_o$. We choose the same initial numbers of function evaluations and discretization dimensions as for the previous test problem {\tt deriv2}. However, unlike the example above, we do not have explicit formulas for the singular value decompositions. Also, due to the fact that the integral kernels are more complicated, we can no longer evaluate $\|\tilde{f}_{k,m_{o}}^\delta\|$ exactly. Therefore, instead of constructing our approximation in $L^2(0,1)$ using the formula \eqref{t6e0}, we now solve the linear equations directly, i.e, we set $\overline{\tilde{f}}_{k,m_o}: =\sum_{j=1}^k\frac{(\overline{b}^\delta_{m_o},\tilde{w}_{jm_o})_{\R^{m_{o}}}}{\tilde{\sigma}_{jm_{o}}}\tilde{z}_{jm_o}\in\R^{m_o}$ (recall that $( \tilde{z}_{jm_o},\tilde{w}_{jm_o}, { \tilde{\sigma}_{jm_o}})$ is the singular value decomposition of $A_{m_o}$). { Hereby, $\overline{b}^\delta_{m_o}\in\R^{m_o}$ with $(\overline{b}^\delta_{m_o})_i =  \frac{1}{o} \sum_{t=1}^o ((b_m)_{(i-1)m_o+t} + \delta Z_{(i-1)m_o+t})$, $b_m=A_mx_m\in\R^m$ and $Z_1,...,Z_m$ are i.i.d. standard Gaussians.} The total error is then computed as follows: First, we have $e_{k,m_o}:=\|\overline{\tilde{f}}^\delta_{k,m_o}-x_{ m_o}\|_{\R^{m_o}}/\sqrt{m_o}$, which approximates the  error between our approximation and the projected solution.  The discretization error is then approximated by $e_{m_o}:=\|P_{m_o,D}x_{m_o} - x_D\|_{{\R}^D}/\sqrt{D}$, where $D=2m=8192$ and $P_{{m_o},D}x_{m_o}$ interpolates and extrapolates $x_{m_o}=\begin{pmatrix} f(\xi_{1m_o}) & ... & f(\xi_{m_om_o})\end{pmatrix}^T\in\R^{m_o}$ linearly to $\R^D$ as follows: { Let $\Lambda$ be the interpolating linear spline on the grid points $\xi_0=0,\xi_{1m_o},...,\xi_{m_om_o},\xi_1=1$ with $\Lambda(\xi_0)=\Lambda(\xi_1)=0$ and $\Lambda(\xi_{im_o}) = (x_{m_o})_i$ for $i=1,...m_o$. Then we set $(P_{m_o,D}x_{m_o})_i=\Lambda(\xi_{iD})$ for $i=1,...,D$.  Thus $e_{m_o}$ approximates the error between the projected solution and the exact one.} Consequently, the total error $\|\overline{\tilde{f}}^\delta_{k,m_o}-f\|_{L^2} = \sqrt{\|\overline{\tilde{f}}_{k,m_o}^\delta - f_m\|^2+\|f_m-f\|^2}$ is approximated by $\sqrt{e_{k,m_o}^2+e_{m_o}^2}$. The choice of linear interpolation is reasonable since the integral kernels are smooth. Note that we use ${\rm err}_{\rm sys}^2(m_o):=\frac{\|g'\|^2}{m_o}$ instead of the estimate ${\rm err}_{\rm sys}^2(m_o):=\|g''\|_\infty^2/(9\cdot 64 m_o^3)$. The reason is that in the implementation of the problems from  ReguTools, the right hand side is computed as an application of the quadrature discretization to the coefficient vector of the exact solution. The exact solution is continuous, but only piecewise smooth. Therefore, the latter estimate ${\rm err}_{\rm sys}^2(m_o):=\|g''\|_\infty^2/(9\cdot 64 m_o^3)$ falls below the modeling error and is replaced by the more conservative choice ${\rm err}_{\rm sys}^2(m_o):=\frac{\|g'\|^2}{m_o}$. The numerical results are shown in Table 3. They are very similar to the observations we made for {\tt deriv2}.

\begin{algorithm}[h!]\label{algorithm1}
	\caption{Discrepancy principle + dimension reduction}
	\begin{algorithmic}[1]
		\State Given noisy point evalutations $g^\delta_m\in\R^m$ { for $m=a^n$, with  $ a,n\in\N$ and $\N \ni n_0< n$};
		\State \textit{Initialization}
		\State $i=0$ and { $o_0=a^{n-n_0}$, $o_{i+1}= o_i/a$}
		\State Determine $k^\delta_{\rm dp}(m_{o_i})$ and $k^\delta_{\rm dp}(m_{o_{i+1}}{)}$
		\State \textit{Check stopping criterion}
		\While{ $k^\delta_{\rm dp}(m_{o_{i+1}})\ge k^\delta_{\rm dp}(m_{o_{i}})$ } 
        \If{{ $m_{o_{i+1}}= m$}}
        \State {$i=i+1$ and end loop}
        \EndIf
		\State $i=i+1$;
		\State $o_{i+1}=o_i{/ a}$;
		\State Determine $k^\delta_{\rm dp}(m_{o_{i+1}})$
		\EndWhile
		\State Final choice $\overline{f}^\delta_{k^\delta_{\rm dp},m_{o_i}}$
	\end{algorithmic}
\end{algorithm}
\begin{table}[h!]\label{tab1}
	\centering
	\caption{{\tt deriv2} with rough solution $(s=1/8)$, left columns, and medium smooth solution $(s=3/8)$, right columns}
	\setlength{\tabcolsep}{4pt}
	\begin{tabular}{cc|ccccc|ccccc|}
		&$m_o$ &$4^{6}$ & $4^{5}$ & $4^4$ & $4^3$ & $4^2$&$4^{6}$ & $4^{5}$ & $4^4$ & $4^3$ & $4^2$ \\
		\toprule
		${\rm SNR}=8^3$&	$e_{\rm opt}$ & {\bf 7.0e-1}	& 7.3e-1 & 8.0e-1 &  8.5e-1 &  8.9e-1 &{\bf 3.7e-2}	& 5.6-2 & 1.0e-1 &  1.7e-1 &  2.6e-1   \\
		&	$e_{\rm dp}$    & {\bf 8.0e-1}     & 8.3e-1 &8.7e-1 & 9.1e-1  &9.4e-1 & {\bf 9.9e-2}     & 1.4e-1 &1.8e-1 & 2.5e-1  &4.1e-1  \\
		&	$k_{\rm opt}$  & {\bf 65}   & 56& 23 & 10 &4 &{\bf 20}   & 14& 8 & 4 &2 \\
		&			$k_{\rm dp}$& {\bf 15}      & 9& 5 & 2 & 1 &  3      &{\bf 3}& 3 & 2 & 1 \\		\botrule		
		${\rm SNR}=8^2$&	$e_{\rm opt}$ & 7.6e-1	& {\bf 7.7e-1} & 7.9e-1 &  8.5e-1 &  8.9e-1 & 6.6e-2	&{\bf 7.0-2} & 1.0e-1 &  1.7e-1 &  2.6e-1  \\
		&	$e_{\rm dp}$    &  8.6e-1     &{\bf 8.5e-1} &8.7e-1 & 9.1e-1  &9.4e-1   &  1.8e-1     &{\bf 1.8e-1} &1.8e-1 & 2.5e-1  &4.1e-1\\
		&	$k_{\rm opt}$  & 29   & {\bf 29}& 22 & 10 &4 & 11   &{\bf 11}& 8 & 4 &2 \\
		&			$k_{\rm dp}$&  5      &{\bf 7}& 5 & 2 & 1&  3      &{\bf 3}& 3 & 2 & 1 \\		\botrule	
		${\rm SNR}=8$&	$e_{\rm opt}$ & 8.2e-1	& 8.2e-1 &{\bf 8.2e-1} &  8.5e-1 &  8.9e-1 & 1.2e-1	& 1.2-1 &{\bf 1.3e-1} &  1.8e-1 &  2.6e-1   \\
		&	$e_{\rm dp}$    &  9.1e-1     & 9.0e-1 &{\bf 8.9e-1} & 9.1e-1  &9.4e-1  & 4.1e-1     & 2.5e-1 &{\bf 2.5e-1} & 2.5e-1  &4.1e-1  \\
		&	$k_{\rm opt}$  & 13   & 13&{\bf 13} & 9 &4 & 6   & 6&{\bf 6} & 4 &2 \\
		&			$k_{\rm dp}$&  2      & 3&{\bf 3} & 2 & 1& 1      & 2&{\bf 2} & 2 & 1 \\		\botrule	
		${\rm SNR}=1$&	$e_{\rm opt}$ & 8.7e-1	& 8.8e-1 & 8.7e-1 & {\bf 8.8e-1} &  9.0e-1 & 2.1e-1	& 2.1-1 & 2.1e-1 & {\bf 2.1e-1} &  2.6e-1  \\
		&	$e_{\rm dp}$    &  1.0e0     & 9.5e-1 &9.4e-1 &{\bf 9.2e-1}  &9.4e-1  &  1e0     & 4.1e-1 &4.1e-1 &{\bf 4.1e-1}  &4.1e-1 \\
		&	$k_{\rm opt}$  & 5   & 5& 5 &{\bf 5} &4 & 3   & 3& 3 &{\bf 3} &2  \\
		&			$k_{\rm dp}$&  0      & 1& 1 &{\bf 2} & 1&  0      & 1& 1 &{\bf 1} & 1 \\		\botrule	
	\end{tabular}
\end{table}

\begin{table}[h!]\label{tab2}
	\centering
	\caption{{\tt deriv2} with smooth solution ($s=5/8$)}
	\setlength{\tabcolsep}{4pt}
	\begin{tabular}{cc|ccccc|}
		&$m_o$ &$4^{6}$ & $4^{5}$ & $4^4$ & $4^3$ & $4^2$ \\
		\toprule
		${\rm SNR}=8^3$&	$e_{\rm opt}$ &{\bf 6.0e-3}	& 1.5-2 & 3.4e-2 &  7.4e-2 &  8.1e-2   \\
		&	$e_{\rm dp}$    & {\bf 3.9e-2}     & 3.9e-2 &7.4e-2 & 1.9e-1  &1.9e-1  \\
		&	$k_{\rm opt}$  &{\bf 10}   & 6& 4 & 2 &2  \\
		&			$k_{\rm dp}$& {\bf 3}      & 3& 2 & 1 & 1 \\		\botrule	
		${\rm SNR}=8^2$&	$e_{\rm opt}$ & 1.4e-2	&{\bf 1.8-2} & 3.4e-2 &  7.4e-2 &  8.1e-2   \\
		&	$e_{\rm dp}$    &  7.4e-2     &{\bf 7.4e-2} &7.4e-2 & 1.9e-1  &1.9e-1  \\
		&	$k_{\rm opt}$  & 7   &{\bf 6}& 4 & 2 &2  \\
		&			$k_{\rm dp}$& 2      &{\bf 2}& 2 & 1 & 1 \\		\botrule	
		${\rm SNR}=8$&	$e_{\rm opt}$ & 3.3e-2	& 3.3-2 &{\bf 4.1e-2} &  7.5e-2 &  8.1e-2   \\
		&	$e_{\rm dp}$    &  1.9e-1     & 1.9e-1 &{\bf 7.7e-2} & 1.9e-1  &1.9e-1  \\
		&	$k_{\rm opt}$  & 4   & 4& {\bf4} & 2 &2  \\
		&			$k_{\rm dp}$&  1      & 1&{\bf 2} & 1 & 1 \\		\botrule	
		${\rm SNR}=1$&	$e_{\rm opt}$ & 8.4e-2	& 8.3-2 & 8.3e-2 & {\bf 8.7e-2} &  9.6e-2   \\
		&	$e_{\rm dp}$    & 1e0     & 1.9e-1 &1.9e-1 &{\bf 1.9e-1}  &1.9e-1  \\
		&	$k_{\rm opt}$  & 2   & 2& 2 &{\bf 2} &2  \\
		&			$k_{\rm dp}$&  0      & 1& 1 &{\bf 1} & 1 \\		\botrule		
	\end{tabular}
\end{table}
~\\
\begin{table}[h!]\label{tab4}
	\centering
	\caption{gravity, left columns, and heat, right columns}
	\setlength{\tabcolsep}{4pt}
	\begin{tabular}{cc|ccccc|ccccc|}
		&$m_o$ &$4^{6}$ & $4^{5}$ & $4^4$ & $4^3$ & $4^2$  &$4^{6}$ & $4^{5}$ & $4^4$ & $4^3$ & $4^2$  \\
		\toprule
		${\rm SNR}=8^3$&	$e_{\rm opt}$ &1.7e-2	&{\bf  1.7-2} & 2.0e-2 &  5.0e-2 &  1.9e-1 &{\bf 2.2e-2}	& 2.7-2 & 8.6e-2 &  3.4e-1 &  9.2e-1   \\
		&	$e_{\rm dp}$    &  6.1e-2     &{\bf  6.1e-2} &6.2e-2 & 1.6e-1  &2.8e-1  & {\bf 1.1e-1}     & 1.8e-1 &3.1e-1 & 6.5e-1  &1.0e0  \\
		&	$k_{\rm opt}$  & 11   &{\bf  11}& 11 & 11 &9 &{\bf 32}   & 32& 31 & 17 &3  \\
		&			$k_{\rm dp}$&  5      &{\bf  5}& 5 & 3 & 1& {\bf 16}      & 14& 9 & 6 & 2 \\		\botrule	
		${\rm SNR}=8^2$&	$e_{\rm opt}$ & 3.3e-2	& 3.3-2 &{\bf  3.5e-2} &  5.7e-2 &  1.9e-1 & 5.7e-2	&{\bf  5.9e-2} & 1.0e-1 &  3.4e-1 &  9.2e-1  \\
		&	$e_{\rm dp}$    &  1.2e-1     & 8.3e-2 &{\bf  1.2e-1} & 1.6e-1  &2.8e-1 &  3.0e-1     &{\bf  2.6e-1} &3.1e-1 & 6.5e-1  &1.0e0  \\
		&	$k_{\rm opt}$  & 9   & 9& {\bf  9} & 9 &8 & 24   & {\bf 24}& 24 & 17 &3  \\
		&			$k_{\rm dp}$& 4      &  5&{\bf  4} & 3 & 1 & 9      &{\bf  10}& 9 & 6 & 2\\		\botrule	
		${\rm SNR}=8$&	$e_{\rm opt}$ & 6.0e-2	& 6.0e-2 &  6.2e-2 &{\bf   7.7e-2} &  2.0e-1 &1.5e-1	& 1.5-1& {\bf 1.6e-1} &  3.5e-1 &  9.2e-1   \\
		&	$e_{\rm dp}$    &  2.1e-1     &2.1e-1 &1.5e-1 &{\bf  1.6e-1}  &2.8e-1  & 6.8e-1     & 6.0e-1 &{\bf 4.0e-1} & 6.5e-1  &1.0e0 \\
		&	$k_{\rm opt}$  & 6   &6& 6 & {\bf 6} &6   &16   & 16&{\bf 17} & 16 &3 \\
		&			$k_{\rm dp}$&  1      &1& 3 & {\bf 3} & 1& 3   &   6& {\bf 8} & 6 & 2  \\		\botrule	
		${\rm SNR}=1$&	$e_{\rm opt}$ & 1.3e-1	& 1.3-1 &1.3e-1 &  1.3e-1 &  {\bf 2.2e-1}  & 3.5e-1	&3.5e-1 & 3.6e-1 & {\bf  4.3-1} &  9.2e0   \\
		&	$e_{\rm dp}$    &  1e0     & 2.1e-1 &2.1e-1 & 2.2e-1 &{\bf  2.8e-1}  &  1.0e0     &8.9e-1 & 7.5e-1 & {\bf 6.8e-1}  &1.0e0 \\
		&	$k_{\rm opt}$  & 4   & 4&4 &  4&{\bf 4} & 10   &10& 10 &{\bf  10} &3 \\
		&			$k_{\rm dp}$&  0      & 1& 1 & 1 &{\bf  1} &  0      &1& 2 &{\bf  3} & 2\\		\botrule		
	\end{tabular}
\end{table}
\newpage
\
\section{Concluding Remarks}\label{s7}

In this paper, we introduced and analyzed a novel approach for solving ill-posed integral equations, with a focus on reducing the necessary computational cost. We obtained rigorous error bounds, and designed and implemented an adaptive method that performed promisingly and stably. Note that also in further experiments, where we tested the setup with an asymmetric heavy-tailed distribution instead of a Gaussian one, we observed no significant differences and obtained similar results. 

We point out three important issues for further research: First, the important task of constructing adaptive data-driven methods has only been touched upon briefly. In particular, it would be advantageous to have a method that does not require knowledge of the noise level $\delta$ and the norm of $g'$. A possible candidate would be the (modified) heuristic discrepancy principle as introduced in \cite{jahn2023noise}, for which good results have been demonstrated. We have already tested this method numerically. While for fixed $m_o$ the achieved error was smaller than that of the discrepancy principle, it was not possible to identify the optimal discretization dimension as the maximum. A possible reason could be that the simple heuristic discrepancy principle is not suitable for the systematic error resulting from the averaging of the true data, and further modification might be needed.  Another promising approach would be to integrate the averaging into the forward operator itself, as was already done in \cite{jahn2022discretisation}. This has the advantage that the data error of the averaged data no longer has a systematic error component, since we now have to consider the error $\overline{g}^\delta_{m_o} - \overline{g}_{m_o}$.  Consequently, knowledge of $\|g'\|$ or $\|g''\|$ is no longer required. However, a rigorous analysis is more complicated, since the effect of the averaging step on the approximation error must be treated carefully, i.e. the effect of the averaging of the operator on the (discretized) singular value decomposition must be well controlled.

In this article, we deliberately focused on the one-dimensional situation only. This allowed us to explain our new averaging method without too much technical detail, to derive precise error estimates, and to demonstrate its superior numerical properties in a straightforward way. Of course, the next step is to extend our method to the more practically relevant higher-dimensional problems that arise, for example, in imaging science. {  Moreover}, note that instead of spectral cut-off, other regularization methods could be used, such as standard ones (Tikhonov regularization or Landweber iteration) and in particular more advanced techniques such as conjugate gradient or stochastic gradient descent. {  While the analysis is straightforward for the specific kernel \eqref{s1:e1a}, it is much more difficult for the general situation. The main problem is that semi-discrete singular functions belonging to high-frequency components do not approximate the corresponding continuous ones. Here one has to consider the analysis with bounds based on the explicit representations of the estimator.  Another limitation  is that we considered a fixed design for the measurement points. In important scenarios, e.g. in the context of inverse learning \cite{rastogi2023inverse}, it may be more appropriate to assume that the locations of the measurement points are sampled from a distribution. Then the projection of the measurements onto the collocation grid is more challenging. Here, simple binning of the points may not be the best approach, and weighted averages should be used instead. In this case, a trade-off must be made between the better approximation of the systematic component and the increased variance (compared to the mean) of the weighted mean. Finally, it would be interesting to investigate higher order quadrature rules depending on the smoothness of the kernel.}

\appendix

\section{Appendix}

In this section, we collect the proofs of our theoretical results.
\subsection{Proofs of Section \ref{s3:s1}.}
We begin with the proofs for Section \ref{s3:s1}.
\begin{proof}[Proof of Theorem \ref{t2}]
Note that $\mathcal{N}(K_m)^\perp$ is spanned by linear splines on the grid defined by $\xi_{1m},...,\xi_{mm}$. As mentioned above, $s>3/4$ implies that $f$ is differentiable and we get the bound
\begin{equation}\label{s2:err1}
\Delta_m=\|f_m-f\|\le \frac{\|f'\|}{\sqrt{2}(m+{1)}},
\end{equation}
see (3.12) in \cite{jahn2022cross}.   First, it suffices to restrict here to the case when $o_m>1$, since $\overline{f}_{k,m_{o}}^\delta$ and $f_{k,m}^\delta$ coincide for $o=1$. By definition this further implies ${(m+1)} \delta^2\ge \rho^2$. Thus
	\begin{equation}\label{s2:err1a}
		\frac{\|f'\|^2}{(m{+1})^2} \le \frac{\rho^2}{(m{+1)}^2}\le \frac{\delta^2}{m+1}\le \frac{k^5\delta^2}{m{+1}}
	\end{equation}
		for all $k=1,...,m$, and 
  \begin{equation*}
      \frac{\|f'\|^2}{(m+1)^2}\le \frac{\rho^2}{(0+1)^{4s}}
  \end{equation*}
  for $k=0$. Thus we see that the discretization error $\Delta_m$ is dominated by the other error contributions if $o_m\ge 2$.  We now  show that
		\begin{align}\label{t2e1}
		c' \left(\frac{k^5}{m{+1}}\delta^2 + (k{+1)}^{-{ 4}s}\rho^2\right)\le \sup_{f\in\mathcal{X}_{s,\rho}}\E\|f^\delta_{k,m}-f\|^2\le C'\left(\frac{k^5}{m{+1}}\delta^2 + {(k+1)}^{-{4}s}\rho^2\right)
	\end{align}
		with
	\begin{align*}
		c':&= \frac{16}{\pi^{4(s+1)}} \qquad C':= 3\pi^4 + \frac{1}{2}.
	\end{align*}
	For the variance term $\delta^2\sum_{j=1}^k\frac{1}{\sigma_{jm}^2}$,  Lemma \ref{lem0} and the elementary estimate $2x/\pi\le \sin(x)\le x$ for $x\le \pi/2$ gives
	\begin{equation}\label{s2:err2}
	\frac{2^4k^5}{{5(m+1)}} \le \sum_{j=1}^k\frac{1}{\sigma_{jm}^2} \le \frac{3\pi^4 k^5}{ {(m+1)}},
	\end{equation}
  where we also used that $\frac{k^5}{5}\le\sum_{j=1}^kj^{4}\le k^5.$
	For the approximation error $\sum_{j=k+1}^m(f,v_{jm})^2$, we have by (3.6) in \cite{jahn2022cross} that
	\begin{equation}\label{s2:err3}
	\sup_{f\in\mathcal{X}_{s,\rho}}\sum_{j=k+1}^m(f,v_{jm})^2\le \frac{3^{s+1}}{2^{4s-1}}\frac{\rho^2}{{(k+1)}^{4s}}\le 3\pi^4 \frac{\rho^2}{(k+1)^{4s}},
	\end{equation}
	which together with \eqref{s2:e11}, \eqref{s2:err1}, \eqref{s2:err1a} and \eqref{s2:err2} gives the upper bound in \eqref{t2e1}. For the lower bound we use the specific instance $\tilde{f}:=\varphi_{ s}(\sigma_{k+1}^2)\rho v_{k+1}$. Since $f-f_m$ is orthogonal to the range of $v_{1m},...,v_{mm}$, we have
		\begin{align}\label{s3e00}
		\sup_{f\in\mathcal{X}_{s,\rho}}\E\|f_{k,m}^\delta-f\|^2&\ge \delta^2\sum_{j=1}^k\frac{1}{\sigma_{jm}^2} +\sum_{j=k+1}^m(\tilde{f},v_{jm})^2.
	\end{align}
		All that remains is to bound the second term from below. Due to the special choice of $\tilde{f}$ and {Proposition} 3.8 from \cite{jahn2022cross} we get, for $k<m$,
		\begin{align}\label{s3:e01}
\sum_{j=k+1}^m(\tilde{f},v_{jm})^2&=\varphi_s(\sigma_{k+1}^2)^2\rho^2 \frac{(m+1)\sigma_{k+1}^2}{\sigma_{k+1m}^2}\ge\frac{{16}\rho^2}{\pi^{4(s+1)}{(k+1)}^{4s}},
	\end{align}
since $s>3/4$. In case of $k=m$ we have $\sum_{j=k+1}^m(\tilde{f},v_{jm})^2=0$ and the preceding estimate does not hold. However, since $(m+1)\delta^2\ge \rho^2$ and $s>3/4$, we obtain for $k=m$ that
 \begin{align*}
     \sup_{f\in\mathcal{X}_{s,\rho}}\E\|f^\delta_{m,m} - f\|^2 \ge \frac{16}{5}\frac{m^5}{m+1}\delta^2 \ge \frac{8}{5} \frac{m^5}{m+1} \delta^2  + \frac{8}{5} \frac{m^5\rho^2}{(m+1)^2} \ge c'\left( \frac{m^5}{m+1}\delta^2 +  \frac{\rho^2}{(m+1)^{4s}}\right)
 \end{align*}
 which shows the assertion \eqref{t2e1}.
	
	Next, let $2\le o \in O_{m,\delta,\rho}$. Clearly, the components of $\overline{g}^\delta_{m_o}$ are approximations of $Kf=g$ on the coarser grid with meshwidth $1/(m_o+1)$ instead of \mbox{$1/(m+1)$}. More precisely,
\begin{equation*}
		\left(\overline{g}^\delta_{m_o}\right)_i = \frac{1}{o}\sum_{j=1}^o (g\left(\xi_{{((i-1)o+j )}m}\right) + \delta Z_{(i-1)o+j})=\frac{1}{o}\sum_{j=1}^o g\left(\xi_{((i-1)o+j)m}\right) + \frac{\delta}{\sqrt{o}}\overline{Z}_i,
		\end{equation*}
		where we defined $\overline{Z}_i:= \frac{\sum_{j=1}^o Z_{(i-1)o+j}}{\sqrt{o}}$. Note that $\overline{Z}_i\sqrt{o}$ and $Z_1$ have the same { expectation and variance}. Furthermore, the $\overline{Z}_i$ are independent and identically distributed. Consequently, the variance of the measurement is reduced by a factor of $1/o$. However, the bias has changed. Now, using the Cauchy-Schwarz inequality, we obtain the upper bound
		\begin{align}\notag
		\left|\frac{\sum_{j=1}^o g(\xi_{{((i-1)o + j)}m})}{o} - g(\xi_{im_o})\right|
		&\le \frac{1}{o}\left|\sum_{j=1}^og(\xi_{{((i-1)o+j)}m})-g(\xi_{im_o})\right|\le \sup_{t\in(\xi_{(i-1)o,m},\xi_{i,m})}\left|g(t)-g(\xi_{im_o})\right|\\\label{s3e01}
		&\le \left|\int_{\xi_{{((i-1)o)}m}}^{\xi_{{(io)}m}}g'(t){\rm d}t\right|
		\le \sqrt{\frac{{o}}{{(m+1)}}} \sqrt{\int_{\xi_{{((i-1)o)}m}}^{\xi_{{(io)}m}}(g')^2(t){\rm d}t}.
	\end{align}
		Consequently, for $k=0,...,m_{o}$, and using \eqref{s2:err2}, we obtain
		\begin{align*}
		&\sum_{j=1}^k\frac{\E\left( \overline{z}^\delta_{m_o} - K_{m_o}f,u_{jm_o}\right)^2}{\sigma_{jm_o}^2}\\
		  &~~= \sum_{j=1}^k\frac{1}{\sigma_{jm}^2}\left( \begin{pmatrix} \frac{\sum_{s=1}^o g\left(\xi_{sm}\right)}{o}-g(\xi_{1m_o}) \\ \vdots \\ \frac{\sum_{s=1}^og(\xi_{{((m-1)o+s)}m})}{o} -  g(\xi_{m_om_o}) \end{pmatrix}, u_{jm_o}\right)^2 + \sum_{j=1}^k\frac{\delta^2}{\sigma_{jm_o}^2}\E\left[\left(\begin{pmatrix} \bar{Z}_1 \\ \vdots\\ \bar{Z}_{m_o} \end{pmatrix},u_{jm_o}\right)^2\right]\\
		&~~\le \sigma_{km_o}^{-2} \left\| \begin{pmatrix} \frac{\sum_{s=1}^o g\left(\xi_{sm}\right)}{o}-g(\xi_{1m_o}) \\ \vdots \\ \frac{\sum_{s=1}^og(\xi_{{((m-1)o+s)}m})}{o} -  g(\xi_{m_om_o}) \end{pmatrix} \right\|^2+ \frac{\delta^2}{o}\sum_{j=1}^{k}\frac{1}{\sigma_{jm_o}^2}\\
  &~~\le \sigma_{km_o}^{-2}\frac{o}{{m+1}} \sum_{i=1}^{m_o} \int_{\xi_{{((i-1)o+1)}m}}^{\xi_{{(io)}m}} |g'(x)|^2 {\rm d}x + \frac{3\pi^4 k^5\delta^2}{o(m_o+1)}\le \frac{3\pi^4 k^4}{m_o+1}\left(\frac{o}{m+1}\|g'\|^2 +\frac{ k\delta^2}{o}\right).
	\end{align*}
	 Using $\sup_{f\in\mathcal{X}_{s,\rho}}\|g'\|\le \rho$ together with \eqref{s2:err1} and \eqref{s2:err3} (where $m$ is replaced by $m_o$),   the error gets
		\begin{align*}
		&\sup_{f\in\mathcal{X}_{s,\rho}}\E\|\overline{f}^\delta_{k,m_o} - f\|^2\\
		&~~~\le ~\sup_{f\in\mathcal{X}_{s,\rho}}\sum_{j=1}^k \frac{\E\left(\overline{z}^\delta_{m_o}-K_{m_o}f,u_{jm_o}\right)^2}{\sigma_{jm_o}^2} + \sup_{f\in\mathcal{X}_{s,\rho}}\sum_{j=k+1}^{{m_o}} (f,v_{jm_o})^2 + \sup_{f\in\mathcal{X}_{s,\rho}}\|f_{m_o}-f\|^2\\
		&~~~\le  ~\frac{3\pi^4 k^4}{m_o+1}\left(\sup_{f\in\mathcal{X}_{s,\rho}}\frac{o}{m+1}\|g'\|^2+k\frac{\delta^2}{o}\right) + \frac{3^{s+1}}{2^{4s-1}} \frac{\rho^2}{(k+1)^{4s}} + \frac{\rho^2}{2(m_o+1)^2}\\
		&~~~\le  ~3\pi^4 k^4\left(\frac{o^2}{(m+1)^2}\rho^2+k\frac{\delta^2}{m+1}\right) + \frac{3^{s+1}}{2^{4s-1}}\frac{\rho^2}{(k+1)^{4s}} + \frac{\rho^2}{2(m_o+1)^2}\\
  &~~~\le 3\pi^4k^4\left(\frac{3o^2}{2(m+1)^2}\rho^2 + k \frac{\delta^2}{m+1}\right)+\frac{3^{s+1}}{2^{4s-1}} \frac{\rho^2}{(k+1)^{4s}}.
	\end{align*}
		  For $o_m\ge o\ge 2$, it holds that $\frac{o^2}{(m+1)^2}\rho^2 \le \frac{\delta^2}{m+1}$. Consequently,
		\begin{align}\label{upbound}
		\sup_{f\in\mathcal{X}_{s,\rho}}\E\|\overline{f}^\delta_{k,m_o}-f\|^2 &\le { \frac{15}{2}}\pi^4 k^5 \frac{\delta^2}{m+1} + \frac{3^{s+1}}{2^{4s-1}} \frac{\rho^2}{(k+1)^{4s}} \le \frac{15}{2} \pi^4 \left(k^5 \frac{\delta^2}{m+1} + \frac{\rho^2}{(k+1)^{4s}}\right)
  \end{align}
For a lower bound we first restrict to $k<m_o$ and  obtain, with $\overline{g}_{m_o} = \overline{g}_{m_o}^{0}$ (averages of exact $g_m$), 
		\begin{align*}
	&\sup_{f\in\mathcal{X}_{s,\rho}}\E\|\overline{f}^\delta_{k,m_o} - f\|^2\\
		&~~~=~\frac{\delta^2}{o}\sum_{j=1}^k\frac{1}{\sigma_{jm_o}^2} + \sup_{f\in\mathcal{X}_{s,\rho}}\left(\sum_{j=1}^k\frac{(\overline{g}_{m_o}-g_{m_o},u_{jm_o})^2}{\sigma_{jm_o}^2} + \sum_{j=k+1}^{m_o}(f,v_{jm_o})^2 +\|f-f_{m_o}\|^2\right)\\
		&~~~\ge ~\frac{\delta^2}{o}\sum_{j=1}^k \frac{1}{\sigma_{jm_o}^2} + \sup_{f\in\mathcal{X}_{s,\rho}} \sum_{j=k+1}^{m_o}(f,v_{jm_o})^2
		\ge \frac{16}{5} \delta^2\frac{k^5}{o(m_o+1)} + \rho^2 \frac{16}{\pi^{4(s+1)}(k+1)^{4s}}\\
  	&~~~\ge \frac{8}{5} \delta^2\frac{k^5}{(m+1)} + \rho^2 \frac{16}{\pi^{4(s+1)}(k+1)^{4s}}\\
		&~~~\ge ~c'\left(k^5 \frac{\delta^2}{m+1} + \frac{\rho^2} {(k+1)^{4s}}\right)
	\end{align*}
		where we used $o\le m$ in the fourth step. For $k=m_o$, since $\sum_{j=m_o+1}^{m_o} (f,v_{jm_o})^2=0$, we first have 
\begin{align*}
  \sup_{f\in\mathcal{X}_{s,\rho}}\E\|\overline{f}^\delta_{k,m_o} - f\|^2\ge \frac{16}{5} \delta^2 \frac{m_o^5}{o(m_o+1)} = \frac{8}{5}\left(\delta^2 \frac{m_o^5}{o(m_o+1)} + \delta^2 \frac{m_o^5}{o(m_o+1)}\right).
  \end{align*}
  Because $o^2\le (m+1)\delta^2/\rho^2$ and $s>3/4$, it holds that
  \begin{align*}
      \frac{m_o^5}{o(m_o+1)}\delta^2 \ge \frac{m_o^5 o\rho^2}{(m_o+1)(m+1)}\ge m_o^5 \frac{\rho^2}{(m_o+1)^2} \ge \rho^2(m_o+1)^{-4s},
  \end{align*}
  and thus, for all $k=0,...,m_o$,
  \begin{equation}\label{lowbound}
      \sup_{f\in\mathcal{X}_{s,\rho}}\E\|\overline{f}^\delta_{k,m_o}-f\|^2 \ge c'\left(k^5 \frac{\delta^2}{m+1} + \rho^2 (k+1)^{-4s}\right).
  \end{equation}
  Finally, we have
  \begin{align*}
\min_{k=0,...,m}\sup_{f\in\mathcal{X}_{s,\rho}}\E\|f_{k,m}^\delta-f\|^2&\le C' \min_{k=0,...,m}\left(k^5 \frac{\delta^2}{m+1} + \frac{\rho^2}{(k+1)^{4s}}\right)\le C'\min_{k=0,...,m_o}\left(k^5 \frac{\delta^2}{m+1} + \frac{\rho^2}{(k+1)^{4s}}\right)\\
& \le\frac{C'}{c'}\min_{k=0,...,m_o}\sup_{f\in\mathcal{X}_{s,\rho}}\E\|\overline{f}^\delta_{k,m_o}-f\|^2,
  \end{align*}
  which implies that $c{\rm err}(\delta,m,s,\rho)\le \overline{{\rm err}}(\delta,m_o,s,\rho)$, and
  where we have used \eqref{t2e1} in the first and \eqref{lowbound} in the third step. Similarly,
   \begin{align*}
\min_{k=0,...,m}\sup_{f\in\mathcal{X}_{s,\rho}}\E\|f_{k,m}^\delta-f\|^2&\ge c' \min_{k=0,...,m}\left(k^5 \frac{\delta^2}{m+1} + \frac{\rho^2}{(k+1)^{4s}}\right)\ge \frac{c'}{2}\min_{k=0,...,m_o}\left(k^5 \frac{\delta^2}{m+1} + \frac{\rho^2}{(k+1)^{4s}}\right)\\
& \ge\frac{c'}{15\pi^4}\min_{k=0,...,m_o}\sup_{f\in\mathcal{X}_{s,\rho}}\E\|\overline{f}^\delta_{k,m_o}-f\|^2,
  \end{align*}
  implies $\overline{{\rm err}}(\delta,m_o,s,\rho) \le C {\rm err}(m,\delta,s\rho)$. Here, in the second step we employed the fact that 
  \begin{equation*}
  {k'}^5 \frac{\delta^2}{m+1} + \frac{\rho^2}{(k'+1)^{4s}} \ge m_o^5 \frac{\delta^2}{m+1} \ge \frac{1}{2}\left( m_o^5 \frac{\delta^2}{m+1} + \frac{\rho^2}{(m_o+1)^{4s}}\right)
\end{equation*}
 for all $m_o<k'\le m$, since $o^2 \rho^2\le (m+1)\delta^2$ and $s>3/4$. Theorem \ref{t2} is finally proved.\hfill
\end{proof}
\subsection{Proofs of Section \ref{s5}}
To prove the main theorems, we need some auxiliary results. The corresponding lemmata were given above, and before we give their proofs we state and prove a slight generalization of Proposition \ref{prop0}. Since the kernel $\kappa$ is non-symmetric in general, we need the following semi-discrete model of $K^*$, defined similarly to $K_m$ in \eqref{s1:e2:a}, but where the integration is with respect to the first argument:
\begin{align}\notag
	Q_m:L^2(0,1)&\to\R^m\\\label{disc:adj}
	g &\mapsto \frac{1}{\sqrt{m}} \left( (K^*g)(\xi_{lm})\right)_{l=1}^m = \frac{1}{\sqrt{m}}\left(\int \kappa(x,\xi_{lm}) g(x)\mathrm{d}x\right)_{l=1}^m.
\end{align}
\begin{proposition}\label{prop0mod}
    Assume that $\kappa$ fulfills Assumption \ref{a1}. Let $(\sigma_{jm}')_{j=1}^m$ be the singular values of the semi-discrete operator $Q_m$ from \eqref{disc:adj} and let $\rho_{jm}$ be the eigenvalues of the normalized design matrix $S_m$ given in \eqref{designmat}. Then it holds that $\sigma_{jm}' = \sqrt{\rho_{jm}}$ for all $j=1,...,m$. 
\end{proposition}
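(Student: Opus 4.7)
The plan is to imitate the proof of Proposition \ref{prop0} but, because the kernel is no longer symmetric, to work directly with the composition $Q_mQ_m^*$ on $\R^m$ rather than with an ansatz-space representation matrix. First I would identify the adjoint $Q_m^*:\R^m\to L^2(0,1)$ explicitly. Since
\begin{equation*}
(Q_mf,\lambda)_{\R^m}=\frac{1}{\sqrt{m}}\sum_{i=1}^m\lambda_i\int\kappa(x,\xi_{im})f(x)\mathrm{d}x=\left(f,\,\frac{1}{\sqrt{m}}\sum_{i=1}^m\lambda_i\kappa(\cdot,\xi_{im})\right)_{L^2},
\end{equation*}
one reads off $Q_m^*\lambda=\frac{1}{\sqrt{m}}\sum_{i=1}^m\lambda_i\kappa(\cdot,\xi_{im})$. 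This is just the statement that $\kappa(\cdot,\xi_{im})$ is the Riesz representer of the bounded linear functional $g\mapsto(K^*g)(\xi_{im})$ on $L^2(0,1)$, mirroring the role played by $\kappa(\xi_{im},\cdot)$ in the derivation of \eqref{ansatz} and \eqref{s1:e2b}.

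Next, I would compose $Q_m$ with $Q_m^*$. Substituting the formula for $Q_m^*\lambda$ into the definition of $Q_m$ yields
\begin{equation*}
(Q_mQ_m^*\lambda)_i=\frac{1}{m}\sum_{j=1}^m\lambda_j\int\kappa(x,\xi_{im})\kappa(x,\xi_{jm})\mathrm{d}x=\sum_{j=1}^m(S_m)_{ij}\lambda_j,
\end{equation*}
so that $Q_mQ_m^*=S_m$ as symmetric operators on $\R^m$. The only point that requires care here is the bookkeeping of normalization factors: the $m^{-1/2}$ factors built into $Q_m$ and $Q_m^*$ combine to precisely the $m^{-1}$ sitting in the definition of $S_m$ in \eqref{designmat}, which is exactly what makes the identification $Q_mQ_m^*=S_m$ hold without rescaling.

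Finally, I would invoke the standard spectral identity that for any compact operator between Hilbert spaces the squared singular values coincide with the eigenvalues of the outer product $Q_mQ_m^*$, counted with multiplicity. Since $Q_m$ maps into $\R^m$, there are exactly $m$ such values, and ordering both sequences decreasingly gives $(\sigma'_{jm})^2=\rho_{jm}$ for $j=1,\dots,m$, which is the claimed identity. I do not anticipate any real obstacle: the whole argument reduces to the adjoint computation together with the normalization check, and no spectral-gap or injectivity hypotheses on $Q_m$ are needed (possible zero singular values of $Q_m$ correspond to zero eigenvalues of $S_m$, with the matching multiplicity).
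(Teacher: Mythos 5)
Your proposal is correct and follows essentially the same route as the paper, whose entire proof is the observation that $S_m\alpha = Q_mQ_m^*\alpha$ for all $\alpha\in\R^m$; you simply spell out the adjoint computation and the normalization bookkeeping that the paper leaves implicit. Nothing further is needed.
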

\begin{proof}[Proof of Proposition \ref{prop0mod}.]
This follows directly from the fact that $S_m \alpha = Q_m Q_m^* \alpha$ for all $\alpha \in \R^m$.
\hfill
\end{proof}
We now start to prove the lemmata.
\begin{proof}[Proof of Lemma \ref{lem00}.]
	The proof is based on the Courant-Fischer principle, which states that for {  self-adjoint compact positive semi-definite} operators $S,T$ in Hilbert spaces there holds $|\lambda_i(S)-\lambda_i(T)|\le \|S-T\|$, where $\lambda_i(\cdot)$ is the $i$-th largest eigenvalue. We first apply this bound to $KK^*$ and $Q_m^*Q_m$, where $Q_m$ is from \eqref{disc:adj}. {   It is well-known from \cite{nair2007regularized} that
 \begin{align}\label{lem00e1b}
        \|KK^* - Q_m^*Q_m\| &=\sup_{\substack{h\in L^2(0,1)\\\|h\|=1}}\|KK^*h - Q_m^* Q_m h\| \le \frac{C_K^2}{6m^2}.
	\end{align} 
For the readers convenience we give a short proof of this assertion. Let $h\in L^2(0,1)$, then we have
\begin{align*}
    KK^* h - Q_m^*Q_m h&= \int h(z)\left( \int \kappa(\cdot,y)\kappa(z,y) {\rm d}y - \frac{1}{m}\sum_{l=1}^m \kappa(\cdot,\xi_{lm})\kappa(z,\xi_{lm})\right){\rm d}z
\end{align*}
and thus taking the $L^2$ norm and using the Cauchy-Schwarz inequality  yields
\begin{align*}
    \left\|KK^*h-Q_m^*Q_mh\right\|^2&=\int\left(\int h(z)\left( \int \kappa(x,y)\kappa(z,y) {\rm d}y - \frac{1}{m}\sum_{l=1}^m \kappa(x,\xi_{lm})\kappa(z,\xi_{lm})\right){\rm d}z\right)^2 {\rm d}x\\
    &\le \|h\|^2 \int\left(\int \kappa(x,y)\kappa(z,y){\rm d}y-\frac{1}{m}\sum_{l=1}^m\kappa(x,\xi_{lm})\kappa(z,\xi_{lm})\right)^2{\rm d}z{\rm d}x\\
    &\le \frac{\|h\|^2}{24^2m^4} \sup_{x,y}\left\|\partial^2_{y^2}\left(\kappa(x,y)\kappa(z,y)\right)\right\|_\infty^{{ 2}}\le \frac{\|h\|^2 4^2 C_K^{{ 4}}}{24^2 m^4}, 
\end{align*}
and the assertion \eqref{lem00e2} is proved. }  For $w\in\R^m$ we have
	\begin{align*}
		&\left( S_m w - A_m^T A_m w\right)_i\\
		 &~~~= ~ \frac{1}{m}\sum_{j=1}^m \int \kappa(x,\xi_{im})\kappa(x,\xi_{jm}){\rm d}x~ w_j - \frac{1}{m^2} \sum_{j=1}^m \sum_{l=1}^m \kappa(\xi_{lm},\xi_{im})\kappa(\xi_{lm},\xi_{jm})w_j\\
		&~~~=~\frac{1}{m}\sum_{j=1}^m w_j \left( \int \kappa(x,\xi_{im})\kappa(x,\xi_{jm}) {\rm d}x - \frac{1}{m}\sum_{l=1}^m \kappa(\xi_{lm},\xi_{im})\kappa(\xi_{lm},\xi_{jm})\right)
	\end{align*}
	and thus
	\begin{align}\notag
		&\|S_m - A_m^TA_m\|
		 =\sup_{\substack{w\in\R^m, \|w\|=1}} \|S_m w- A_m^TA_m w\|\\\notag
		&~~~=~\sup_{\substack{w\in\R^m, \|w\|=1}}\sqrt{\sum_{i=1}^m\left(\frac{1}{m}\sum_{j=1}^m w_j \left(\int \kappa(x,\xi_{im})\kappa(x,\xi_{jm}){\rm d}x - \frac{1}{m}\sum_{l=1}^m\kappa(\xi_{lm},\xi_{im})\kappa(\xi_{lm},\xi_{jm})\right)\right)^2}\\\notag
		&~~~\le~ \sup_{\substack{w\in\R^m, \|w\|=1}} \sqrt{\sum_{i=1}^m\left(\frac{1}{m}\sum_{j=1}^m|w_j| \frac{\|\partial_x^2\left(\kappa(x,\xi_{im})\kappa(x,\xi_{jm})\right)\|_\infty}{24 m^2}\right)^2}\\\label{lem00e2}
  &~~~\le\sup_{\substack{w\in\R^m, \|w\|=1}} \frac{C_K^2}{6 m^2} \frac{1}{\sqrt{m}}\sum_{j=1}^m |w_j|
		\le \frac{C_K^2}{6 m^2}.
	\end{align}

	By Proposition \ref{prop0mod}, we know that the singular values of the semi-discrete operator $Q_m$ are the square root of the eigenvalues of the matrix $S_m$, denoted by $\rho_{jm}$. Therefore, applying the Courant-Fischer principle and \eqref{lem00e2}, we deduce that 
	\begin{equation*}
		\left|\rho_{jm}-\tilde{\sigma}_{jm}^2\right|\le \|S_m - A_m^TA_m\|\le \frac{C_K^2}{6 m^2}\quad\mbox{for all } j=1,...,m.
	\end{equation*}
	Furthermore, by \eqref{lem00e1b} we also infer
	\begin{equation*}
		\left|\sigma_j^2-\rho_{jm}\right|\le \|KK^*-Q_m^*Q_m\|\le \frac{C_K^2}{6 m^2}\quad\mbox{for all } j=1,...,m.
	\end{equation*}
	 With the triangle inequality we get
	\begin{equation*}
		\left|\sigma_j^2-\tilde{\sigma}_{jm}^2\right|\le \frac{C_K^2}{3 m^2},
	\end{equation*}
	and the proof of Lemma \ref{lem00} is finished.\hfill
	\end{proof}
 \begin{proof}[Proof of Lemma \ref{lem1}.]
	The proof of \eqref{lem1:e01} is straightforward: We have
	\begin{align*}
		\varepsilon^2&\ge \|Kv-\lambda v\|^2 = \| \sum_{i\in\N} \lambda_i(v,v_i)v_i -\lambda\sum_{i\in\N} (v,v_i)v_i\|^2 = \sum_{i\in {\N}}(\lambda_i-\lambda)^2(v,v_i)^2\\ &\ge \min_{i\in\N}|\lambda_i-\lambda|^2 \sum_{i\in\N}(v,v_i)^2
		=\|v\|^2\min_{i\in\N}|\lambda_i-\lambda|^2.
	\end{align*}
	For \eqref{lem1:e02}, {  it holds that
	\begin{align*}
		(v,Pv) &= \left(v,\sum_{i\in I}(v,v_i)v_i\right)= \sum_{i\in I}(v,v_i)^2=\|v\|^2-\sum_{i\not\in I}(v,v_i)^2 =\|v\|^2- \sum_{i\not\in I}\frac{(\lambda_i-\lambda)^2}{(\lambda_i-\lambda)^2}(v,v_i)^2\\
  &\ge \|v\|^2- \frac{1}{c^2}\sum_{i\not\in I}(\lambda_i-\lambda)^2(v,v_i)^2\\
		&\ge \|v\|^2- \frac{1}{c^2}\sum_{i\in\N}(\lambda_i-\lambda)^2(v,v_i)^2 = \|v\|^2 - \frac{1}{c^2}\|Kv-\lambda v\|^2 \ge \|v\|^2-\frac{\varepsilon^2}{c^2}.
	\end{align*}}\hfill
	\end{proof}
 \begin{proof}[Proof of Lemma \ref{lem002}.]
	We start with the proof of \eqref{lem002:e1}. First,
	\begin{align*}
		&(\tilde{v}_{jm},\tilde{v}_{im})= \frac{1}{\tilde{\sigma}_{jm}\tilde{\sigma}_{im} m } \sum_{l,l'=1}^m (\tilde{w}_{jm})_l (\tilde{w}_{im})_{l'} \int \kappa(\xi_{lm},y)\kappa(\xi_{l'm},y){\rm d}y\\
		&~~~=~\frac{1}{\tilde{\sigma}_{jm}\tilde{\sigma}_{im} m} \sum_{l,l'=1}^m(\tilde{w}_{jm})_l(\tilde{w}_{im})_{l'}\left(\frac{1}{m}\sum_{t=1}^m \kappa(\xi_{lm},\xi_{tm})\kappa(\xi_{l'm},\xi_{tm})  + \mathcal{O}\left(\frac{1}{m^2}\right)\right)\\
		&~~~=~\frac{1}{\tilde{\sigma}_{jm}\tilde{\sigma}_{im}}\sum_{t=1}^m \left(\frac{1}{m}\sum_{l=1}^m \kappa(\xi_{lm},\xi_{tm})(\tilde{w}_{jm})_l\right)\left(\frac{1}{m}\sum_{l'=1}^m\kappa(\xi_{l'm},\xi_{tm}) (\tilde{w}_{im})_{l'}\right)\\
		&\qquad + \frac{1}{\tilde{\sigma}_{jm}\tilde{\sigma}_{im}} \sum_{l=1}^m \frac{|(\tilde{w}_{jm})_l|}{\sqrt{m}}\sum_{l'=1}^m\frac{|(\tilde{w}_{im})_{l'}|}{\sqrt{m}}\mathcal{O}\left(\frac{1}{m^2}\right)\\
		&~~~=~\frac{1}{\tilde{\sigma}_{jm}\tilde{\sigma}_{im}} \sum_{t=1}^m \tilde{\sigma}_{jm} (\tilde{z}_{jm})_t \tilde{\sigma}_{im} (\tilde{z}_{im})_t + \frac{1}{\tilde{\sigma}_{jm}\tilde{\sigma}_{im}} \mathcal{O}\left(\frac{1}{m^2}\right) = \delta_{ij}+\frac{1}{\tilde{\sigma}_{jm}\tilde{\sigma}_{im}}\mathcal{O}\left(\frac{1}{m^2}\right).
	\end{align*}
	Since $\|\partial^2_y( \kappa(\xi_{lm},y)\kappa(\xi_{l'm},y))\|_\infty\le 4C_K^2$ and $\sum_{l=1}^m \frac{|(\tilde{w}_{jm})_l|}{\sqrt{m}}\le 1$, we can write the constant exactly in the $\mathcal{O}$ notation {  as $\frac{4 C_K^2}{24 m^{2}}$. Together with the fact that $\tilde{\sigma}_{lm}^2\ge\sigma_l^2/2$ for $l\le J_m$, we obtain
	\begin{equation*}
		\left|(\tilde{v}_{jm},\tilde{v}_{im}) - \delta_{ij}\right|\le \frac{1}{\tilde{\sigma}_{jm}\tilde{\sigma}_{im}}\frac{4 C_K^2}{24 m^2}\le \frac{C_K^2}{3 \sigma_j \sigma_i m^2}	
	\end{equation*} 
	and \eqref{lem002:e1}. This brings us to the proof of \eqref{lem002:e2}. The upper bound is clear from the definition of $P\tilde{v}_{jm}$.} For the lower bound we use {Lemma} \ref{lem1}. As a first step, we show that
	\begin{equation}\label{lem1:e1}
		\left\|K^*K \tilde{v}_{jm} - \tilde{\sigma}_{jm}^{  2} \tilde{v}_{jm}\right\| \le \frac{C_K^3}{2 \sigma_j m^2}.
	\end{equation}
	Indeed,
	\begin{align}\notag
		&K^*K\tilde{v}_{jm}=\int {\kappa}(y,\cdot) \int \kappa(y,z) \tilde{v}_{jm}(z) {\rm d}z{\rm d}y\\\notag
		&~= ~\int \kappa(y,\cdot)\left[\int \kappa(y,z) \tilde{v}_{jm}(z){\rm d}z - \frac{1}{m}\sum_{l=1}^m \kappa(y,\xi_{lm}) \tilde{v}_{jm}(\xi_{lm})\right]{\rm d}y + \int \kappa(y,\cdot) \frac{1}{m}\sum_{l=1}^m \kappa(y,\xi_{lm}) \tilde{v}_{jm}(\xi_{lm}){\rm d}y\\\label{lem1:e1a}
		&~= ~\int \kappa(y,\cdot)\left[\int \kappa(y,z) \tilde{v}_{jm}(z){\rm d}z - \frac{1}{m}\sum_{l=1}^m \kappa(y,\xi_{lm}) \tilde{v}_{jm}(\xi_{lm})\right]{\rm d}y\\\label{lem1:e1b}
		&\qquad +\frac{1}{m}\sum_{l=1}^m \tilde{v}_{jm}(\xi_{lm})\left[\int \kappa(y,\cdot) \kappa(y,\xi_{lm}){\rm d}y - \frac{1}{m}\sum_{l'=1}^m  \kappa(\xi_{l'm},\cdot) \kappa(\xi_{l'm},\xi_{l{m}})\right]\\\label{lem1:e1c}
		&\qquad + \frac{1}{m^2}\sum_{l,l'=1}^m  \kappa(\xi_{l'm},\cdot)\kappa(\xi_{l'm},\xi_{l{ m}})\tilde{v}_{jm}(\xi_{lm}).
	\end{align}
	Now, for \eqref{lem1:e1c} we have
	\begin{align*}
		&\frac{1}{m^2}\sum_{l,l'=1}^m \kappa(\xi_{l'm},\cdot)\kappa(\xi_{l'm},\xi_{lm})\tilde{v}_{jm}(\xi_{lm})	= \frac{1}{m^2}\sum_{l,l'=1}^m  \kappa(\xi_{l'm},\cdot)\kappa(\xi_{l'm},\xi_{l{m}})\frac{1}{\tilde{\sigma}_{jm}\sqrt{m}}\sum_{i=1}^m (\tilde{w}_{jm})_i \kappa(\xi_{im},\xi_{lm})\\
		&~=~\frac{1}{m^\frac{3}{2}}\sum_{l,l'=1}^m \kappa(\xi_{l'm},\cdot)\kappa(\xi_{l'm},\xi_{lm}) (\tilde{z}_{jm})_l
		=\frac{\tilde{\sigma}_{jm}}{\sqrt{m}} \sum_{l'=1}^m  (\tilde{w}_{jm})_{l'}\kappa(\xi_{l'm},\cdot)
		=\tilde{\sigma}_{jm}^2 \tilde{v}_{jm}(\cdot).
	\end{align*}
	Next we  bound \eqref{lem1:e1b} from above and obtain
	\begin{align*}
		&\left\|\frac{1}{m}\sum_{l=1}^m\tilde{v}_{jm}(\xi_{lm})\left[\int \kappa(y,\cdot) \kappa(y,\xi_{lm}){\rm d}y - \frac{1}{m}\sum_{l'=1}^m \kappa(\xi_{l'm},\cdot) \kappa(\xi_{l'm},\xi_{l{ m}})\right]\right\|\\
		&~~~\le~\frac{C_K^2}{6m^2}\frac{1}{m}\sum_{l=1}^m|\tilde{v}_{jm}(\xi_{lm})| = \frac{C_K^2}{6m^3} \sum_{l=1}^m\left|\frac{1}{\tilde{\sigma}_{jm}\sqrt{m}}\sum_{i=1}^m(\tilde{w}_{jm})_i\kappa(\xi_{im},\xi_{lm})\right|\\
		&~~~=~\frac{C_K^2}{6m^\frac{5}{2}} \sum_{l=1}^m\left|(\tilde{z}_{jm})_i\right|\le \frac{C_K^2}{6m^2}.
	\end{align*}
	To bound \eqref{lem1:e1a} we use for $\alpha\in\{0,1,2\}$ that
	\begin{equation*}
		\|\partial^\alpha \tilde{v}_{jm}\|_\infty = \left\| \frac{1}{\tilde{\sigma}_{jm}\sqrt{m}}\sum_{l=1}^m \partial^\alpha_y \kappa(\xi_{lm},\cdot) (\tilde{w}_{jm})_l\right\|_\infty\le \frac{1}{\tilde{\sigma}_{jm}} \frac{C_K}{\sqrt{m}}\sum_{l=1}^m |(\tilde{w}_{jm})_l|\le \frac{C_K}{\tilde{\sigma}_{jm}}
	\end{equation*}
	and thus
	\begin{align*}
		&\left\|\int \kappa(y,\cdot)\left[\int \kappa(y,z) \tilde{v}_{jm}(z){\rm d}z - \frac{1}{m}\sum_{l=1}^m \kappa(y,\xi_{lm}) \tilde{v}_{jm}(\xi_{lm})\right]{\rm d}y\right\|\\
		&~~~\le~ C_K \frac{\sup_y\|\partial^2_z(\kappa(y,z)\tilde{v}_{jm}(z))\|_\infty}{24 m^2}
		\le \frac{C_K^3}{6 \tilde{\sigma}_{jm} m^2} \le \frac{\sqrt{2}C_K^3}{6\sigma_{j} m^2}.
	\end{align*} 
	Since $\sigma_j \le C_K$ we get 
	\begin{align*}
		\left\|K^*K \tilde{v}_{jm}- \tilde{\sigma}_{jm}^{  2} \tilde{v}_{jm}\right\|\le \frac{C_K^2}{6m^2}+\frac{\sqrt{2}C_K^3}{6\sigma_j m^2}\le \frac{(1+\sqrt{2})C_K^3}{6\sigma_j m^2} \le \frac{C_K^3}{2 \sigma_j m^2}
	\end{align*}
	which shows \eqref{lem1:e1}. Now we use Lemma \ref{lem1}. By Lemma \ref{lem00} we have that $|\sigma_j^2-\tilde{\sigma}_{jm}^2|\le \frac{C_K^2}{3m^2}< \frac{{c_j}}{2}$, so {  $j\in\arg\min_{i\in\N}\left|\sigma_i^2-\tilde{\sigma}_{jm}^2\right|=\{\psi_-(j),...,\psi_+(j)\}=:I_j$.} Then by \eqref{lem1:e02} of Lemma \ref{lem1} with
	\begin{equation*}
		\min_{i\not\in I_j}|\sigma_i^2-\tilde{\sigma}_{jm}^2|\ge \min_{i\not\in  I_j}|\sigma_i^2-\sigma_j^2| - |\sigma_j^2-\tilde{\sigma}_{jm}^2| \ge c_j - \frac{C_K^2}{3m^2} >\frac{c_j}{2}=:c
	\end{equation*}
	and 
	\begin{equation*}
		\|K^*K \tilde{v}_{jm}- \tilde{\sigma}_{jm}^2 \tilde{v}_{jm}\| \le \frac{C_K^3}{2\sigma_j m^2} =:\varepsilon,
	\end{equation*}
	{see \eqref{lem1:e1},} we deduce that { 
	\begin{align*}
		(\tilde{v}_{jm},P\tilde{v}_{jm}) &\ge \|\tilde{v}_{jm}\|^2 -\frac{\varepsilon^2}{c^2} = \|\tilde{v}_{jm}\|^2-\frac{\frac{C_K^6}{4\sigma_j^2 m^4}}{\frac{c_j^2}{4}}= \|\tilde{v}_{jm}\|^2 -\frac{C_K^6}{c_j^2\sigma_j^2 m^4}.
	\end{align*}}\hfill
\end{proof}
	\begin{proof}[Proof of Theorem \ref{t4}.]
 First, note that
  \begin{align*}
    \frac{(g_m,\tilde{w}_{jm})_{\R^m}}{\sqrt{m}\tilde{\sigma}_{jm}}&=\frac{\left(K_m f,\tilde{w}_{jm}\right)_{\R^m}}{\tilde{\sigma}_{jm}} = \frac{(f, K_m^* \tilde{w}_{jm})}{\tilde{\sigma}_{jm}} = \frac{\left(f,\sum_{l=1}^m (\tilde{w}_{jm})_l \kappa(\xi_{lm},\cdot)\right)}{\sqrt{m}\tilde{\sigma}_{jm}} = (f,\tilde{v}_{jm}).
   \end{align*}	
   We start to bound the different contributions of the decomposition \eqref{s4e0}.	First, we analyze the first term, the random contribution. { By \eqref{jm} and \eqref{lem002:e1} we have $\|\tilde{v}_{jm}\|^2\le 2$, and since the noise is unbiased and  $(\tilde{w}_{jm},\tilde{w}_{j'm})_{\R^m}=\delta_{jj'}$}, we obtain
	\begin{align*}
		&\E\left[\left\| \sum_{j=1}^k \frac{(g_m^\delta-g_m,\tilde{w}_{jm})_{\R^m}}{\sqrt{m}\tilde{\sigma}_{jm}} \tilde{v}_{jm}\right\|^2\right]= \sum_{j,j'=1}^k \frac{\E\left[(g^\delta_m-g_m,\tilde{w}_{jm})_{\R^m}(g^\delta_m-g_m,\tilde{w}_{j'm})_{\R^m}\right]}{m\tilde{\sigma}_{jm}\tilde{\sigma}_{j'm}} (\tilde{v}_{jm},\tilde{v}_{j'm})\\
		 &~~~=~\sum_{j,j'=1}^k \frac{\delta^2}{m\tilde{\sigma}_{jm}\tilde{\sigma}_{j'm}} (\tilde{w}_{jm},\tilde{w}_{j'm})_{\R^m}(\tilde{v}_{jm},\tilde{v}_{j'm}) =  \frac{\delta^2}{m}\sum_{j=1}^k \frac{1}{\tilde{\sigma}_{jm}^2}\|\tilde{v}_{jm}\|^2 \le \frac{4\delta^2}{m} \sum_{j=1}^k\frac{1}{\sigma_j^2}.
	\end{align*}	
{ 	Now, with Lemma \ref{lem002}, we have
	\begin{align}\label{t2:esttilde}
		&\|\tilde{v}_{jm}-P\tilde{v}_{jm}\|^2= \|\tilde{v}_{jm}\|^2 -2(\tilde{v}_{jm},P\tilde{v}_{jm})+\|P\tilde{v}_{jm}\|^2 = \|\tilde{v}_{jm}\|^2-(\tilde{v}_{jm},P\tilde{v}_{jm})\le \frac{C_K^{6}}{c_j^2\sigma_j^2 m^4}.
	\end{align}
 So
	\begin{align*}
		&\left\| \sum_{j=1}^k(f,\tilde{v}_{jm}-P\tilde{v}_{jm}) \tilde{v}_{jm}\right\|^2\\
		&~~~=~\sum_{j=1}^k (f,\tilde{v}_{jm}-P\tilde{v}_{jm})^2 \|\tilde{v}_{jm}\|^2 + \sum_{\substack{j,j'=1\\ j\neq j'}}^k (f,\tilde{v}_{jm}-P\tilde{v}_{jm})(f,\tilde{v}_{j'm}-P\tilde{v}_{j'm})(\tilde{v}_{jm},\tilde{v}_{j'm})\\
		&~~~\le~  \|f\|^{ 2}\sum_{j=1}^k\|\tilde{v}_{jm}-P\tilde{v}_{jm}\|^2\|\tilde{v}_{jm}\|^2 +\sum_{\substack{j,j'=1\\j\neq j'}}^k|(f,\tilde{v}_{jm}-P\tilde{v}_{jm})(f,\tilde{v}_{j'm}-P\tilde{v}_{j'm})| \frac{2C_K^2}{3 \sigma_j\sigma_{j'} m^2}\\
		&~~~\le~ 2\|f\|^{ 2} \sum_{j=1}^k \|\tilde{v}_{jm}-P\tilde{v}_{jm}\|^2 + \frac{2C_K^2}{3 m^2} \left(\sum_{j=1}^k \frac{|(f,\tilde{v}_{jm}-P\tilde{v}_{jm})|}{\sigma_j}\right)^2\\
		&~~~\le~ 2\|f\|^{2}\sum_{j=1}^k \|\tilde{v}_{jm}-P\tilde{v}_{jm}\|^2 + \frac{2 C_K^2\|f\|^2}{3m^2}\left(\sum_{j=1}^k\frac{\|\tilde{v}_{jm}-P\tilde{v}_{jm}\|}{\sigma_j}\right)^2\\
		&~~~\le~ \frac{ C_K^{ 6} \|f\|^2}{m^4}\sum_{j=1}^k\frac{1}{c_j^2\sigma_j^2 } + \frac{2C_K^{{8}} \|f\|^2}{3m^6}\left(\sum_{j=1}^k\frac{1}{c_j \sigma_j^2}\right)^2.
	\end{align*}
	Furthermore, using Cauchy-Schwarz, we get
	\begin{align*}
		\left\|\sum_{j=1}^k(f,P\tilde{v}_{jm})(\tilde{v}_{jm}-P\tilde{v}_{jm})\right\|^2 &\le \left(\sum_{j=1}^k|(f,P\tilde{v}_{jm})|\|\tilde{v}_{jm}-P\tilde{v}_{jm}\|\right)^2 \le \sum_{j=1}^k (f,P\tilde{v}_{jm})^2 \sum_{j=1}^k \|\tilde{v}_{jm}-P\tilde{v}_{jm}\|^2\\
		&\le \frac{C_K^{{6}}}{m^4}\sum_{j=1}^k(f,P\tilde{v}_{jm})^2\sum_{j=1}^k\frac{1}{c_j^2\sigma_j^2},
	\end{align*}
  and we can derive the bound
    \begin{align}\notag
        \sum_{j=1}^k(f,P\tilde{v}_{jm})^2 &= \sum_{j=1}^k\left(\sum_{j'=\psi_-(j)}^{\psi_+(j)}(f,v_{j'})(v_{j'},P\tilde{v}_{jm})\right)^2 \le \sum_{j=1}^k \sum_{j'=\psi_-(j)}^{\psi_+(j)}(f,v_{j'})^2 \sum_{j'=\psi_-(j)}^{\psi_+(j)}(v_{j'},P\tilde{v}_{jm})^2\\\label{t4:upbound}
        &\le \sum_{j=1}^k \sum_{j'=\psi_-(j)}^{\psi_+(j)}(f,v_{j'})^2 \|P\tilde{v}_{jm}\|^2 \le 2M_k \|f\|^2.
    \end{align}
    Finally,
    \begin{align*}
    \sum_{j=1}^k(f,P\tilde{v}_{jm})P\tilde{v}_{jm} - f&=\sum_{j=1}^{\psi_+(k)}(f,P\tilde{v}_{jm})P\tilde{v}_{jm} - \sum_{j=1}^{\psi_+(k)}(f,v_j)v_j - \sum_{j=\psi_+(k)+1}^\infty(f,v_j)v_j\\
    &\qquad- \sum_{j=k+1}^{\psi_+(k)} (f,P\tilde{v}_{jm})P\tilde{v}_{jm},
    \end{align*}
    and we set $k':=\psi_+(k)$. Since $k\le J_m$, also $\psi_+(k)\le J_m$ by definition of $J_m$. Note that $P\tilde{v}_{1m},...,P\tilde{v}_{k'm}$ are not orthonormal in general, so we define the Gramian matrix $\Gamma_{k'}:=\left((P\tilde{v}_{jm},P\tilde{v}_{j'm})\right)_{j,j'=1}^{k'}\in\R^{k'\times k'}$. From the orthonormality of the $v_i$ it follows that 
    \begin{align*}
&\left(P\tilde{v}_{jm},P\tilde{v}_{j'm}\right)=\left(P\tilde{v}_{jm},P\tilde{v}_{j'm}\right)\delta_{\psi_-(j)\psi_-(j')}\\
&~~=\left(\tilde{v}_{jm},\tilde{v}_{j'm}\right)\delta_{\psi_-(j)\psi_-(j')}+\left(P\tilde{v}_{jm}-\tilde{v}_{jm},\tilde{v}_{j'm}\right)\delta_{\psi_-(j)\psi_-(j')}+\left(P\tilde{v}_{jm},P\tilde{v}_{j'm}-\tilde{v}_{j'm}\right)\delta_{\psi_-(j)\psi_-(j')}.
    \end{align*}
    From \eqref{t2:esttilde} we deduce
    \begin{align*}
    |(P\tilde{v}_{jm},P\tilde{v}_{j'm}-\tilde{v}_{j'm})|\delta_{\psi_-(j)\psi_-(j')}+ |(P\tilde{v}_{jm}-\tilde{v}_{jm},\tilde{v}_{j'm})|\delta_{\psi_-(j)\psi_-(j')}\le \frac{4C_K^{{3}}}{c_{j}\sigma_{j} m^2}.
    \end{align*}
    By the Gerschgorin circle theorem and Lemma \ref{lem002}, for the eigenvalues $\gamma_{1k'}, ...,\gamma_{k'k'}$ of $\Gamma_{k'}$, it holds that 
    \begin{align}\notag
    &|\gamma_{ik'}-1|\le |1-(P\tilde{v}_{im},P\tilde{v}_{im})| + \sum_{\substack{j=\psi_-(i)\\ j \neq i}}^{\psi_+(i)}|(P\tilde{v}_{im},P\tilde{v}_{jm})|\\\label{gramian}
    &~~\le|1-(\tilde{v}_{im},\tilde{v}_{im})| + \sum_{\substack{j=\psi_-(i)\\ j \neq i}}^{\psi_+(i)}|(\tilde{v}_{im},\tilde{v}_{jm})| + \frac{4 M_{{ i}} C_K^{{3}}}{c_i\sigma_i m^2} \le  \frac{5M_{{ i}} C_K^{{3}}}{c_i\sigma_i m^2},
    \end{align}
where we have used that $\frac{C_K^2}{3\sigma_i^2 m^2}\le \frac{C_K^3}{3\sigma_i^3 m^2} \le \frac{C_K^3}{3\sigma_ic_i m^2}$ in the last step. The choice of $J_m$ guarantees that the above estimate can further be bounded by $1/2$ from above, which implies that  all $\gamma_{ik'}$ are positive. Hence $\Gamma_{k'}$ is invertible and  $P\tilde{v}_{1m},...,P\tilde{v}_{{k'}m}$ are linearly independent. Since $k'=\psi_+(k)$ it follows  that ${\rm span}(P\tilde{v}_{1m},...,P\tilde{v}_{k'm})={\rm span}(v_1,...,v_{k'})$. We now define 
\begin{equation*}
W_{k'}: \R^{k'} \to L^2(0,1),~\alpha=\begin{pmatrix} \alpha_1 & ... & \alpha_{k'} \end{pmatrix}^T\mapsto \sum_{j=1}^{k'} \alpha_j P\tilde{v}_{jm}
\end{equation*}
with adjoint
\begin{equation*}
{W_{k'}}^*:L^2(0,1)\to \R^{k'},~h\mapsto \begin{pmatrix} (h,P\tilde{v}_{1m}) & ... & (h,P\tilde{v}_{{k'}m})\end{pmatrix}^T.
\end{equation*}
Then $\Gamma_{k'}\alpha={W_{k'}}^*W_{k'}\alpha$ and it is easy to check that $W_{k'}\Gamma_{k'}^{-1}W_{k'}^*$ is a bounded linear and normal projection, therefore $W_{k'}\Gamma_{k'}^{-1}W_{k'}^*  = P_{{\rm span}(P\tilde{v}_{1m},...,P\tilde{v}_{k'm})}=P_{{\rm span}(v_1,...,v_{k'})}$. Thus, it holds that
\begin{equation*}
\sum_{j=1}^{k'}(f,P\tilde{v}_{jm})P\tilde{v}_{jm} - \sum_{j=1}^{k'}(f,v_j)v_j = W_{k'}W_{k'}^*f - W_{k'} \Gamma_{k'}^{-1} W_{k'}^*f= W_{k'}(I_{k'}-\Gamma_{k'}^{-1})W_{k'}^*f,
\end{equation*}
with $I_{k'}:\R^{k'}\to \R^{k'}$ being the identity matrix. Using \eqref{t4:upbound} with $h$ instead of $f$, we obtain the estimate
\begin{equation*}
    \|W_{k'}\|=\|W_{k'}^*\|=\sup_{\substack{ h\in L^2(0,1)\\ \|h\|=1}}\left\|\begin{pmatrix} (h,P\tilde{v}_{1m}) \\ ... \\ (h,P\tilde{v}_{{k'}m})\end{pmatrix}\right\|_{\R^m} = \sup_{\substack{ h\in L^2(0,1)\\ \|h\|=1}}\sqrt{\sum_{j=1}^{k'}(h,P\tilde{v}_{jm})^2} \le   \sqrt{2 M_{k'}}.
\end{equation*}
Furthermore, \eqref{gramian} implies that 
\begin{equation*}
    \left|1- \gamma_{i{k'}}^{-1}\right| = \frac{\displaystyle|1-\gamma_{i{k'}}|}{\displaystyle\gamma_{i{k'}}} \le \frac{\displaystyle\frac{\displaystyle5M_{ i} C_K^{{3}}}{\displaystyle c_i\sigma_i m^2}}{\displaystyle1-\frac{\displaystyle5M_{ i} C_K^{{3}}}{\displaystyle c_i\sigma_i m^2}}    \le \frac{\displaystyle10M_{{ i}}C_K^{{3}}}{\displaystyle c_i\sigma_i m^2},
\end{equation*}
and since $I_{k'}$ is the identity matrix, $\|I_{k'}-\Gamma_{k'}^{-1}\|\le \max_{i\le k'}\frac{10M_{i} C_K^{{3}}}{c_{i}\sigma_{i} m^2}$ follows. Consequently,
\begin{equation*}
\left\|\sum_{j=1}^{k'}(f,P\tilde{v}_{jm})P\tilde{v}_{jm} - \sum_{j=1}^{k'}(f,v_j)v_j\right\| =\| W_{k'}(I_{k'}-\Gamma_{k'}^{-1})W_{k'}^*f\| \le { M_{k}} \max_{i\le k'}\frac{20 M_{i} C_K^{{3}}\|f\|}{c_{i}\sigma_{i} m^2}.
\end{equation*}
It remains to bound $\sum_{j=k+1}^{k'}(f,P\tilde{v}_{jm})P\tilde{v}_{jm}.$ With similar reasoning as in \eqref{t4:upbound} we obtain
\begin{align*}   \left\|\sum_{j=k+1}^{k'}(f,P\tilde{v}_{jm})P\tilde{v}_{jm}\right\|^2&\le \sum_{j=k+1}^{k'}(f,P\tilde{v}_{jm})^2 \sum_{j=k+1}^{k'} \|P\tilde{v}_{jm}\|^2\le 4 M_k^2 \sum_{j=\psi_-(k)}^{k'}(f,v_j)^2. 
\end{align*}
    	Combining all the previous estimates, we end up with 
	\begin{align*}
		&\sqrt{\E\|\tilde{f}_{k,m}^\delta-f\|^2}\le \frac{2\delta}{\sqrt{m}}\sqrt{\sum_{j=1}^k\frac{1}{\sigma_j^2}}+ \frac{(1+\sqrt{{ 2}M_k})C_K^{{3}}\|f\|}{m^2}\sqrt{\sum_{j=1}^k\frac{1}{c_j^2 \sigma_j^2}} + \frac{\sqrt{2} C_K^{{4}} \|f\|}{\sqrt{3}m^3}\sum_{j=1}^k\frac{1}{c_j\sigma_j^2}\\
  &\qquad+ { M_k}{\max_{i\le k'}\frac{20 M_{i}C_K^{3}\|f\|}{c_{i}\sigma_{i} m^2}}+ 2 M_k \sqrt{\sum_{j=\psi_-(k)}^{\psi_+(k)}(f,v_j)^2}+\sqrt{\sum_{j=\psi_+(k)+1}^\infty (f,v_j)^2}.
	\end{align*}
} \hfill
	\end{proof} 
\begin{proof}[Proof of Theorem \ref{t6}.]
		As mentioned above, compared to the setting in Theorem \ref{t4}, we need to carefully analyze the contribution of the systematic component in the data propagation error. First, we show that
\begin{equation}\label{t6e1}
		\left\|\overline{g}_{m_o}-g_{m_o}\right\|^2\le \frac{\|g''\|_\infty^2}{9\cdot 64 m_o^3}.
	\end{equation}
To prove \eqref{t6e1}, we check that the right-hand side $g$ of \eqref{s1:e0} is twice differentiable. In fact, we have
	\begin{align*}
		\frac{d^2}{dx^2}g(x) =\frac{d^2}{d x^2} (Kf)(x) = \frac{d^2}{d x^2} \int \kappa(x,y) f(y) {\rm d}y = \int \frac{\partial^2}{\partial x^2} \kappa(x,y) f(y){\rm d}y.
	\end{align*}
	This is due to the dominated convergence theorem, since $\kappa$ is twice differentiable and $|f|$ is integrable. 
	Thus, using the Cauchy-Schwarz inequality, we obtain
	\begin{equation*}
		\|g''\|_\infty \le \sup_x \| \partial_x^2\kappa(x,\dot)\| \|f\|\le C_{{K}} \|f\|.
	\end{equation*}
	Now we use the Taylor expansion around $\xi_{im_o}$ with the exact Peano remainder term. This gives for $\zeta_t \in [\xi_{{(o(i-1) +t)}m},\xi_{im_o}]$ the identity
	\begin{align*}
		&\frac{1}{o}\sum_{t=1}^og(\xi_{{((i-1)o+t)}m}) - g(\xi_{im_o})\\
		&~~~=~-g(\xi_{im_o})+\frac{1}{o}\sum_{t=1}^o\left(g(\xi_{im_o}) + g'(\xi_{im_o})(\xi_{{((i-1)o+t)}m} - \xi_{im_o}) + \frac{g''(\zeta_t)}{2}\left(\xi_{{((i-1)o+t)}m} - \xi_{im_o}\right)^2\right)\\
		&~~~=~\frac{1}{o}\sum_{t=1}^o\left(g'(\xi_{im_o})(\xi_{{((i-1)o+t)}m} - \xi_{im_o}) + \frac{g''(\zeta_t)}{2}\left(\xi_{{((i-1)o+t)}m} - \xi_{im_o}\right)^2\right).
	\end{align*}
	Next, we show that
	\begin{equation}\label{t6e2}
		\frac{1}{o}\sum_{t=1}^og'(\xi_{im_o})(\xi_{{((i-1)o+t)}m} - \xi_{im_o}) = 0.
	\end{equation}
	First,
	\begin{align*}
		\xi_{i{m_o}} &= \frac{1}{o}\sum_{t=1}^o \xi_{{(o(i-1)+t)}m},\\
		\xi_{{((i-1)o+t)}m} - \xi_{im_o}&=\frac{2(o(i-1)+t)-1}{2m} - \frac{2i-1}{2\frac{m}{o}} =  \frac{2t-o-1}{2m},	
	\end{align*}
	for $i=1,...,m/o$ and $t=1,...,o$. Furthermore, we have
	\begin{equation*}		\xi_{{((i-1)o+t)}m} - \xi_{im_o} = -\left(\xi_{{((i-1)o+(o-t))}m}-\xi_{im_o}\right)	\end{equation*}
 for $i=1,...,m/o$ and $t=1,...,\lfloor o/2\rfloor$, and additionally, if $o$ is odd, 
		\begin{equation*}
		\xi_{{((i-1)o+\frac{o+1}{2})}m} - \xi_{im_o} = 0.\end{equation*}
  This shows \eqref{t6e2}. Consequently,	
  \begin{align*}&\frac{1}{o}\sum_{t=1}^og(\xi_{{((i-1)o+t)}m}) - g(\xi_{im_o}) = \frac{1}{o}\sum_{t=1}^o \frac{g''\left(\zeta_t\right)}{2} \frac{(2t-o-1)^2}{4m^2}\end{align*}
	and we get
	\begin{align*}
		\|\overline{g}_{m_o}-g_{m_o}\|^2 & = \sum_{i=1}^{m_o}\left(\frac{1}{o}\sum_{t=1}^og(\xi_{{((i-1)o+t)} m}) - g(\xi_{im_o})\right)^2 \le \frac{\|g''\|_\infty^2}{64 m^4o^2}\sum_{i=1}^{m_o}\left(\sum_{t=1}^o(2t-o-1)^2\right)^2\\
		&= \frac{\|g''\|_\infty^2}{64 m^4o^2} m_o\left(\frac{1}{3}o(o^2-1)\right)^2\le \frac{\|g''\|_\infty^2}{9\cdot 64 m_o^3},
	\end{align*}
which shows \eqref{t6e1}. Now, for the data propagation error, we derive
	\begin{align*}
&\E\left[\left\|\sum_{j=1}^k\frac{(\overline{g}^\delta_{m_o}-g_{m_o},\tilde{w}_{jm_o})_{\R^m}}{\sqrt{m_o}\tilde{\sigma}_{jm_o}} \tilde{v}_{jm_o}\right\|^2\right]\\
		&~~~=~\sum_{j=1}^k \frac{\E\left[(\overline{g}^\delta_{m_o}-g_{m_o},\tilde{w}_{jm_o})^2_{\R^m}\right]}{m_o\tilde{\sigma}_{jm_o}^2}\|\tilde{v}_{jm_o}\|^2\\
		 &\qquad\qquad+ \sum_{\substack{j,i \\ j\neq i}}^k \frac{\E\left[\left(\overline{g}^\delta_{m_o}-g_{m_o},\tilde{w}_{jm_o}\right)_{\R^m}\right]\E\left[(\overline{g}^\delta_{m_o}-g_{m_o},\tilde{w}_{im_o})_{\R^m}\right]}{m_o \tilde{\sigma}_{jm_o}{\tilde{\sigma}_{im_o}}} (\tilde{v}_{jm_o},\tilde{v}_{im_o})\\
		&~~~\le~ \frac{2}{m_o}\sum_{j=1}^k\frac{\frac{\delta^2}{o}+(\overline{g}_{m_o}-g_{m_o},\tilde{w}_{jm_o})_{\R^m}^{{2}}}{\tilde{\sigma}_{jm_o}^2} + \frac{C_K^2}{3 m_o^3} \sum_{\substack{i,j\\i\neq j}}^k \frac{(\overline{g}_{m_o}-g_{m_o},\tilde{w}_{jm_o})_{\R^m}(\overline{g}_{m_o}-g_{m_o},\tilde{w}_{im_o})_{\R^m}}{\tilde{\sigma}_{jm_o}{\sigma_j \tilde{\sigma}_{im_o} \sigma_i}}\\
		&~~~\le~\frac{4\delta^2}{m}\sum_{j=1}^k\frac{1}{\sigma_{j}^2} + \frac{4}{m_o}\sum_{j=1}^k\frac{(\overline{g}_{m_o}-g_{m_o},\tilde{w}_{jm_o})_{\R^m}^2}{\sigma_{j}^2}+ \frac{{ 2}C_K^2}{3m_o^3}\left(\sum_{j=1}^k\frac{(\overline{g}_{m_o}-g_{m_o},\tilde{w}_{jm_o})_{\R^m}}{\sigma_{j}^2}\right)^2\\
		&~~~\le~\frac{4\delta^2}{m}\sum_{j=1}^k\frac{1}{\sigma_j^2} + \frac{4}{m_o \sigma_k^2} \sum_{j=1}^k(\overline{g}_{m_o}-g_{m_o},\tilde{w}_{jm_o})^2_{\R^m}\\
		 &\qquad\qquad+ \frac{{2}C_K^2}{3 m_o^3} \left(\sum_{j=1}^k\frac{1}{\sigma_j^4} \right)\left(\sum_{j=1}^k(\overline{g}_{m_o}-g_{m_o},\tilde{w}_{jm_o})_{\R^m}^2\right)\\
		&~~~\le~\frac{4\delta^2}{m}\sum_{j=1}^k\frac{1}{\sigma_j^2} + \frac{4}{m_o \sigma_k^2} \|\overline{g}_{m_o}-g_{m_o}\|^2_{\R^m} + \frac{{ 2}C_K^2}{3 m_o^3} \left\|\overline{g}_{m_o}-g_{m_o}\right\|^2_{\R^m}\sum_{j=1}^k\frac{1}{\sigma_j^4}\\
		&~~~\le~\frac{4\delta^2}{m}\sum_{j=1}^k\frac{1}{\sigma_j^2} + \frac{\|g''\|_\infty^2}{3^2 \cdot 2^4m_o^4 \sigma_k^2}  + \frac{C_K^2\|g''\|^2_\infty}{{3^3 \cdot 2^5} m_o^6}\sum_{j=1}^k\frac{1}{\sigma_j^4}.
	\end{align*}
	For the remaining error terms, we simply replace $m$ with $m_o$ and obtain
	{ 
		\begin{align*}
		\sqrt{\E\left\| \overline{\tilde{f}}^\delta_{k,m_o}-f\right\|^2} &\le\frac{2\delta}{\sqrt{m}}\sqrt{\sum_{j=1}^k\frac{1}{\sigma_j^2}} + \frac{\|g''\|_\infty}{12m_o^2 \sigma_k}  + \frac{C_K\|g''\|_\infty}{12\sqrt{6}m_o^3}\sqrt{\sum_{j=1}^k\frac{1}{\sigma_j^4}}  + \sqrt{\sum_{j=\psi_+(k)+1}^\infty(f,v_j)^2}\\
    &\qquad+  2M_k \sqrt{\sum_{j=\psi_-(k)}^{\psi_+(k)} (f,v_j)^2}{+}  \frac{(1+\sqrt{{2}M_k})C_K^{3}\|f\|}{m_o^2}\sqrt{\sum_{j=1}^k\frac{1}{c_j^2 \sigma_j^2}} + \frac{\sqrt{2} C_K^{4} \|f\|}{\sqrt{3}m_o^3}\sum_{j=1}^k\frac{1}{c_j\sigma_j^2}\\
    &\qquad+ { M_k}\max_{i\le k'}\frac{20 M_{i} C_K^{3}\|f\|}{c_{i}\sigma_{i} m_o^2}.	
\end{align*}}
This completes the proof.\hfill
	\end{proof}
\section*{Acknowledgments}
Funded by the Deutsche Forschungsgemeinschaft under Germany's Excellence Strategy - GZ 2047/1, Projekt-ID 390685813 (Hausdorff Center for Mathematics, University of Bonn) and EXC-2046/1, Projekt-ID 390685689 (The Berlin Mathematics Research Center MATH+).

\bibliographystyle{abbrvnat}
\bibliography{references}


\end{document}